\newtheorem{theorem}{Theorem}[section]
\newtheorem*{theorem*}{Theorem}
\newtheorem{lemma}[theorem]{Lemma}
\newtheorem{proposition}[theorem]{Proposition}
\theoremstyle{definition}
\newtheorem{definition}[theorem]{Definition}
\theoremstyle{remark}
\newtheorem{remark}[theorem]{Remark}
\numberwithin{equation}{section}
\newcommand{\IC}{\ensuremath{\mathbb{C}}}
\newcommand{\IQ}{\ensuremath{\mathbb{Q}}}
\newcommand{\IR}{\ensuremath{\mathbb{R}}}
\newcommand{\IZ}{\ensuremath{\mathbb{Z}}}
\newcommand{\IN}{\ensuremath{\mathbb{N}}}
\newcommand{\IH}{\ensuremath{\mathbb{H}}}
\newcommand{\IP}{\ensuremath{\mathbb{P}}}
\newcommand{\ICprime}{\ensuremath{\mathbb{C}^\prime}}
\newcommand{\dd}{\ensuremath{\mathrm{d}}}
\newcommand{\id}{\ensuremath{\mathbf{1}}}
\newcommand{\minusid}{\ensuremath{(\mathbf{-1})}}
\newcommand{\EE}{\ensuremath{\operatorname{E}}}
\newcommand{\re}[1]{\ensuremath{{\operatorname{Re}\left(#1\right)}}}
\newcommand{\im}[1]{\ensuremath{{\operatorname{Im}\left(#1\right)}}}
\newcommand{\abs}[1]{\ensuremath{{\left\lvert#1\right\rvert}}} %    Absolute value notation
\renewcommand{\arg}[1]{\ensuremath{{\operatorname{arg}\left(#1\right)}}}
\newcommand{\SL}[1]{\ensuremath{{\mathrm{SL}\!\left(2, #1 \right)}}}
\newcommand{\Matrix}[4]{{\begin{pmatrix} #1 & #2 \\ #3 & #4 \end{pmatrix}}}
\newcommand{\OO}[1]{\ensuremath{\mathcal{O}\left( #1 \right)}}
\title{Eichler integrals for Maass cusp forms of half-integral weight}
\author{T.~M\"uhlenbruch}
\address{Department of Mathematics and Computer Science, \href{http://www.fernuni-hagen.de}{FernUniversit\"at in Hagen}, 58084 Hagen, Germany}
\email{\href{mailto:tobias.muehlenbruch@fernuni-hagen.de}{tobias.muehlenbruch@fernuni-hagen.de}}
\author{W.~Raji}
\address{\href{http://www.aub.edu.lb/fas/math/}{Department of Mathematics}, \href{http://www.aub.edu.lb/}{American University of Beirut}, Beirut and fellow at \href{http://www.cams.aub.edu.lb/}{Center for Advanced Mathematical Sciences}, Beirut, Lebanon.}
\email{\href{mailto:wr07@aub.edu.lb}{wr07@aub.edu.lb}}
\subjclass[2010]{Primary 11F37; Secondary 11F25, 11F72}
\begin{document}
%%%%%%%%%%%%%%%%%%%%%%%%%%%%%%%%%%%%%%%

\begin{abstract}
In this paper, we define and discuss Eichler integrals for Maass
cusp forms of half-integral weight on the full modular group. We
discuss nearly periodic functions associated to the Eichler
integrals, introduce period functions for such Maass cusp forms, and
show that the nearly periodic functions and the period functions are
closely related. Those functions are extensions of the periodic
functions and period functions for Maass cusp forms of weight $0$ on
the full modular group introduced by Lewis and Zagier.
\end{abstract}
%%%%%%%%%%%%%%%%%%%%%%%%%%%%%%%%%%%%%%

\maketitle

%%%%%%%%%%%%%%%%%%%%%%%%%%%%%%%%%%%%%%%

\section{Introduction}
\label{A}
Recall that modular cusp forms of weight $k \in 2\IN$ (for the group $\SL{\IZ}$) are holomorphic functions $u_\text{h}$ from the upper half-plane $\IH=\{z=x+iy;\; x,y \in \IR, \, y >0\}$ to $\IC$, satisfying the $u_\text{h}(z+1)=u_\text{h}(z)$ and $u_\text{h}(-1/z) = z^k \, u_\text{h}(z)$, and vanish as $y \to \infty$.
More details can be found in e.g.\ \cite{La76} and \cite{Za98}.

In the context of the Eichler-Shimura theorem, we attach to each modular cusp form a polynomial $p$ of degree $\leq k-2$.
One way to define it is by the following integral transformation:
\begin{equation}
\label{C2.1}
p(\zeta) := \int_0^{i\infty} (\zeta-z)^{k-2}
u_\text{h}(z) \, \dd z \qquad (\zeta \in \IC).
\end{equation}
This integral transformation goes back to Eichler in \cite{Ei57}.
One important property is that each period polynomial satisfies the identities
\begin{equation}
\label{C2.2}
\begin{split}
&p(\zeta) + \zeta^{k-2}p\left(\frac{-1}{\zeta}\right) =0
\quad \text{and} \\
&p(\zeta) + (\zeta+1)^{k-2} p\left(\frac{-1}{\zeta+1}\right) + \zeta^{k-2} p\left(\frac{-\zeta-1}{\zeta}\right)
= 0
\end{split}
\end{equation}
for each $\zeta \in \IC$.
Some more details on the Eichler-Shimura theorem can be found in e.g.\ \cite[Chapter V and VI]{La76}, \cite[\S1.1]{KZ84} and \cite{Za91}.

There exists another way to define the period polynomial, involving a variant of the above integral.
Consider the integral transformation
\begin{equation}
\label{C3.1}
f_\text{h}(\zeta) := \int_\zeta^{i\infty} (\zeta-z)^{k-2} u_\text{h}(z) \, \dd z
\qquad (\zeta \in \IH),
\end{equation}
defined only on the upper half-plane $\IH$.
$f_\text{h}$ is obviously a holomorphic function; and it easily seen that $f_\text{h}$ is periodic.
The period polynomial $p$ now appears in the calculation
\begin{align*}
f_\text{h}(\zeta) - \zeta^{k-2} \, f_\text{h}\left(\frac{-1}{\zeta}\right)
&=
\int_\zeta^{i\infty} (\zeta-z)^{k-2} u_\text{h}(z) \, \dd z  - \int_{\zeta}^{0} (\zeta - z)^{k-2} u_\text{h}(z)  \, \dd z \\
&=
\int_0^{i\infty} (\zeta-z)^{k-2} u_\text{h}(z) \, \dd z  =  p(\zeta)
\qquad (\zeta \in \IH).
\end{align*}

\medskip

One important extension of the Eichler-Shimura isomorphism was done by Lewis and
Zagier in \cite{LZ01}.
They found a one-to-one correspondence between Maass cusp forms of weight $0$
(for $\SL{\IZ}$) and functions called period functions.

Part of their main result is the following
\begin{theorem*}[{\cite{LZ01}}]
Let $s$ be a complex number with $\re{s} = \frac{1}{2}$. There is an
isomorphism between the following three function spaces:
\begin{enumerate}
\item
The space of Maass cusp forms of weight $0$ with eigenvalue $s(1-s)$ for $\SL{\IZ}$:
\emph{Maass cusp forms} of \emph{weight} $0$ for $\SL{\IZ}$ are real-analytic functions $u:\IH \to \IC$,
satisfying the transformation properties $u(z+1)=u(z)$ and $u(-1/z) = u(z)$, are eigenfunctions of the hyperbolic
Laplacian $\Delta_0 =-y^2 \left(\partial_x^2 + \partial_y^2\right)$ with eigenvalue $s(1-s)$, and vanish
as $y \to \infty$.
\item
The space of holomorphic functions $f$ on $\IC\smallsetminus \IR$, satisfying $f(z+1)=f(z)$ and bounded
by $\abs{\im{z}}^{-A}$ for some $A > 0$, such that the function $f(z) - z^{-2s} f(-1/z)$ extends holomorphically across
the positive real axis and is bounded by a multiple of $\min \big\{1, \abs{z}^{-1} \big\}$ in the right half-plane.
\item
The space of holomorphic solutions $\psi:\ICprime \to \IC$ of the three-term functional equation
\[
\psi(\zeta) = \psi(\zeta + 1) + (\zeta + 1)^{2s} \psi\left( \frac{\zeta}{\zeta+1} \right)
\]
on $\ICprime = \IC \smallsetminus \IR_{\leq0}$ which satisfy the growth condition
\[
\psi(\zeta) =
\begin{cases}
\OO{\frac{1}{\abs{\zeta}}}  & \text{as $\zeta \to \infty$, $\re{\zeta >0}$ and} \\
\OO{1}  & \text{as $\zeta \to 0$, $\re{\zeta >0}$.}
\end{cases}
\]
\end{enumerate}
\end{theorem*}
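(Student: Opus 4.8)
The plan is to realize the isomorphism as a cycle of explicit linear maps $(1)\to(2)\to(3)\to(1)$ and then to check that the composites, taken in both directions around the cycle, are the identity. The starting point on the Maass side is the Fourier expansion of a cusp form $u$ (with eigenvalue $s(1-s)$) at the cusp $\infty$,
\[
u(z) = \sqrt{y}\,\sum_{n\neq 0} a_n\, K_{s-1/2}\!\left(2\pi\abs{n}y\right) e^{2\pi i n x},
\]
which converges and decays rapidly as $y\to\infty$ because $u$ is cuspidal; from $\int_0^1 \abs{u(x+iy)}^2\,\dd x < \infty$ together with the asymptotics of the $K$-Bessel function one gets a polynomial bound $a_n = \OO{\abs{n}^{C}}$. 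Everything below is organized around the principle already visible in the classical computation $f_\text{h}(\zeta) - \zeta^{k-2}f_\text{h}(-1/\zeta) = p(\zeta)$ recalled in the introduction: the failure of a $(z\mapsto z+1)$-periodic ``primitive'' of $u$ to be invariant under $z\mapsto -1/z$ is exactly the period function.

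\medskip\noindent\emph{Step $(1)\to(2)$.} Given $u$, put $f(z) := \sum_{n\geq 1} a_n\, n^{s-1/2}\, e^{2\pi i n z}$ for $z\in\IH$ and define $f$ on the lower half-plane by the corresponding sum over $n\leq -1$ (the normalization of the coefficients being chosen so that $f$ is the natural holomorphic companion of $u$); this is the analogue for Maass forms of the Eichler integral \eqref{C3.1}. Holomorphy and $f(z+1)=f(z)$ are immediate, and $f(z) = \OO{\abs{\im{z}}^{-A}}$ follows from the coefficient bound. The one substantive point is that $f(z) - z^{-2s}f(-1/z)$ continues holomorphically across $\IR_{>0}$ and is $\OO{\min\{1,\abs{z}^{-1}\}}$ in the right half-plane; this is where the modularity $u(-1/z)=u(z)$ enters, most transparently by writing both $f$ and this combination as integral transforms of $u$ against kernels built from $K_{s-1/2}$ — equivalently, by relating $f$ to the $L$-series $\sum_{n\geq 1} a_n n^{-w}$ of $u$, whose completed form satisfies a functional equation in $w\mapsto 1-w$ (a reformulation of $u(-1/z)=u(z)$) that yields both the continuation and the estimate.

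\medskip\noindent\emph{Step $(2)\leftrightarrow(3)$.} For $\zeta\in\IH$ set $\psi(\zeta) := f(\zeta) - \zeta^{-2s}f(-1/\zeta)$, and likewise on the lower half-plane; by the continuation hypothesis in $(2)$ these two holomorphic functions glue along $\IR_{>0}$ to a single holomorphic function on $\ICprime$. Substituting $\tfrac{\zeta}{\zeta+1} = 1-\tfrac1{\zeta+1}$ and $-\tfrac{\zeta+1}{\zeta} = -1-\tfrac1\zeta$ and using the periodicity of $f$ together with the cocycle identity for the automorphy factor occurring in $(3)$, a short computation — the exact analogue of \eqref{C2.2} — shows that $\psi$ solves the three-term functional equation on $\IH$, hence on all of $\ICprime$ by analytic continuation; the two growth bounds required of $\psi$ are precisely the translation of the $\min\{1,\abs{\zeta}^{-1}\}$ bound in $(2)$. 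Conversely, given a solution $\psi$ of the functional equation with those growth bounds, the defining relation together with its transform under $\zeta\mapsto -1/\zeta$ solves for $f$ on $\IH$ (and on the lower half-plane) in terms of $\psi$ and $\psi(-1/\cdot)$; the content of this direction is that the $f$ so produced is $(z\mapsto z+1)$-periodic and has the stated continuation property, and \emph{periodicity is a genuine consequence of the three-term functional equation} rather than a formality — it is established by iterating the functional equation, the growth bounds justifying the convergence.

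\medskip\noindent\emph{Step $(3)\to(1)$ and closing the cycle.} From $\psi$ one builds a candidate Maass form as an integral transform of $\psi$ along the positive real axis against a kernel $R_s(z,t)$ which, as a function of $z$, is an eigenfunction of $\Delta_0$ with eigenvalue $s(1-s)$; with such a choice the eigenvalue equation and $u(z+1)=u(z)$ are formal, while the decay of $u$ as $\im{z}\to\infty$ comes from the two growth estimates on $\psi$. The remaining — and, I expect, hardest — point is the relation $u(-1/z)=u(z)$: here one combines a change of variables in the defining integral (for instance the substitution $t\mapsto 1/t$) with the three-term functional equation of $\psi$, the growth bounds ensuring that the manipulation is legitimate and that no boundary term survives. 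Finally one checks that $(1)\to(2)\to(3)\to(1)$, and its reverse, are the identity; since every arrow is of Fourier or Mellin type this amounts to an inversion statement — concretely, that the $n$-th Fourier coefficient of $u$, the $n$-th coefficient of the expansion of $f$, and the datum read off from $\psi$ agree up to the expected Gamma-factors — which is bookkeeping once the transforms have been pinned down. (Alternatively, $(1)$ and $(3)$ can be matched directly, bypassing $(2)$, by a closed Green's-form integral $\psi(\zeta) = \int_0^{i\infty}\eta\!\left(u,\,R_s(\cdot,\zeta)\right)$, the closedness of the $1$-form $\eta$ reflecting that $u$ and $R_s(\cdot,\zeta)$ share the eigenvalue $s(1-s)$.)
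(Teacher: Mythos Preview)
The paper does not prove this theorem: it is quoted verbatim from \cite{LZ01} as background and motivation, and no proof is supplied or attempted here. The paper's own contribution is Theorem~\ref{A.1}, the half-integral-weight analogue, and even there only one direction is established --- the maps from Maass cusp forms to nearly periodic functions and to period functions --- with the converse explicitly left open in \S\ref{F}. So there is no ``paper's own proof'' of this statement to compare your proposal against.

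That said, it is worth contrasting your sketch with the machinery the paper develops for Theorem~\ref{A.1}, since that machinery is modelled on \cite[Chapter~II]{LZ01}. Your principal route $(1)\to(2)$ is via the Fourier expansion of $u$ and the associated $L$-series (this is \cite[Chapter~I]{LZ01}); the paper instead takes what you mention only parenthetically at the end, namely the closed Green-form integral $\int_0^{i\infty}\eta(u,R(\cdot,\zeta))$, and makes it the primary construction (Definition~\ref{D4.2} and \eqref{D3.1}). The Fourier/$L$-series route has the advantage that the coefficients of $f$ are read off directly and the functional equation of the $L$-series does the work; the integral-transform route makes the three-term equation a one-line consequence of splitting a path of integration (Lemma~\ref{D4.6}) and generalises more cleanly to nontrivial multipliers.

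One point in your $(2)\leftrightarrow(3)$ step deserves correction. You say periodicity of $f$ is recovered from $\psi$ ``by iterating the functional equation, the growth bounds justifying the convergence.'' In fact the argument is purely algebraic: one sets $c^\star_\pm f = \psi + \psi\|S$ and verifies by a finite computation with the relations $S^2=(ST)^3=-\id$ that the three-term equation for $\psi$ forces $f\|T=f$ (this is exactly Lemma~\ref{D3.7} here, specialised to $k=0$, $v\equiv 1$). No limiting process or growth estimate is needed at this stage; the growth conditions enter only when one wants $\psi$ to extend across $\IR_{>0}$ or when building $u$ back from $\psi$.
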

In analogy to the case of modular cusp forms, Lewis and Zagier
also introduce two integral transformations in \cite[Chapter~II]{LZ01} from Maass cusp forms to period functions
(which are the functions $\psi$ above) and the periodic holomorphic functions $f$ on $\IC \smallsetminus \IR$.

\medskip

In this paper, we extend the integral transformations of \cite{LZ01}
to the context of Maass forms of half-integral weights with a
multiplier system. Our main result is the following:
\begin{theorem}
\label{A.1} Let $\nu$ be a purely imaginary complex number, i.e.\
$\nu \in i\IR$, $k \in \frac{1}{2} \IZ$ a weight and $v$ a
compatible multiplier as defined in \S\ref{B2}.

For each Maass cusp form $u$ of weight $k$, eigenvalue $\frac{1}{4}-\nu^2$ and multiplier $v$ for $\SL{\IZ}$,
see Definition~\ref{D1.1} for details, there exist the following functions:
\begin{enumerate}
\item
A holomorphic function function $f:\IC\smallsetminus \IR \to \IC$, which is nearly periodic, i.e.\ $f(z+1) = a f(z)$
for some $a\in \IC$ with $\abs{a}=1$, such that $f(z) - v\left(\Matrix{0}{-1}{1}{0} \right) \, z^{2\nu-1} \, f(-1/z)$
extends holomorphically across the positive real axis and is bounded by a multiple of $\min \{1, \abs{z}^{-1} \}$ in
the right half-plane.
\item
A holomorphic solution $P:\ICprime \to \IC$ of the three-term functional equation
\[
P(\zeta) = v\left( \Matrix{1}{1}{0}{1} \right)^{-1} \, P(\zeta + 1) + v\left( \Matrix{1}{0}{1}{1} \right)^{-1} \,
(\zeta + 1)^{2\nu-1} \, P\left( \frac{\zeta}{\zeta+1} \right)
\]
on $\ICprime$ which satisfies the growth condition
\[
P(\zeta) =
\begin{cases}
\OO{\frac{1}{\abs{\zeta}}}  & \text{as $\zeta \to \infty$, $\re{\zeta >0}$ and} \\
\OO{1}  & \text{as $\zeta \to 0$, $\re{\zeta >0}$.}
\end{cases}
\]
We call such a function $P$ a period function.
\end{enumerate}
\end{theorem}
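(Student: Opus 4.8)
The plan is to imitate the Lewis–Zagier construction in \cite[Chapter~II]{LZ01}, tracking the extra multiplier factors $v$ and the shift of exponent from $-2s$ to $2\nu-1$ coming from the half-integral weight. The starting point is the integral transformation analogous to \eqref{C3.1}: given a Maass cusp form $u$ of weight $k$, eigenvalue $\frac14-\nu^2$ and multiplier $v$, I would first recall (or establish, from Definition~\ref{D1.1}) the Fourier expansion of $u$ at the cusp $i\infty$, which has the shape $u(z)=\sum_{n}c_n\,W(nz)$ for a suitable Whittaker/$K$-Bessel building block $W$ and $n$ running over an arithmetic progression dictated by $v\left(\Matrix{1}{1}{0}{1}\right)=e^{2\pi i\kappa}$. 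The function $f$ is then defined by an integral of the form $f(z)=\int_z^{i\infty} R_\nu(z,w)\,u(w)\,d\mu(w)$ with an explicit kernel $R_\nu$ replacing the classical $(\zeta-z)^{k-2}$; the kernel is the one Lewis–Zagier use in the Maass setting (built from the hypergeometric/Legendre-type function attached to the eigenvalue), modified by the weight-$k$ automorphy factor. Convergence of the integral and holomorphy of $f$ on $\IC\smallsetminus\IR$ follow from the exponential decay of $u$ at the cusp exactly as in \cite{LZ01}.

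Next I would verify the two structural properties of $f$. Near-periodicity $f(z+1)=af(z)$ with $a=v\left(\Matrix{1}{1}{0}{1}\right)^{-1}$ (so $\abs a=1$) is immediate from the corresponding near-periodicity of $u$ together with invariance of the kernel and the measure under $z\mapsto z+1$, $w\mapsto w+1$. For the behavior under $S=\Matrix{0}{-1}{1}{0}$, I would compute $f(z)-v(S)\,z^{2\nu-1}f(-1/z)$ by changing variables $w\mapsto -1/w$ in the second integral and using the transformation law $u(-1/w)=v(S)\,(\,\cdot\,)^{k}\,u(w)$; the two cusp-to-cusp pieces combine, as in the computation displayed after \eqref{C3.1}, into a single integral $\int_0^{i\infty}R_\nu(z,w)\,u(w)\,d\mu(w)$ which converges for $z$ in the right half-plane (including on $\IR_{>0}$, since the kernel is regular there when $z\notin\IR_{\le0}$) and is holomorphic across the positive real axis. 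The bound by a multiple of $\min\{1,\abs z^{-1}\}$ on the right half-plane comes from estimating this integral in the two regimes $\abs z\le 1$ and $\abs z\ge 1$, splitting the $w$-contour at height $1$ and using the decay of $u$ and the known asymptotics of $R_\nu$—this is the same two-regime estimate carried out in \cite{LZ01}, and it is here that the purely-imaginary hypothesis $\nu\in i\IR$ is used to keep $\abs{w^{2\nu-1}}=\abs w^{-1}$.

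For part (2), I would define $P$ from $f$ by $P(\zeta)=f(\zeta)-v(S)\,\zeta^{2\nu-1}f(-1/\zeta)$ on $\ICprime$ — i.e.\ $P$ is exactly the function shown in part (1) to extend holomorphically across $\IR_{>0}$, hence it is a well-defined holomorphic function on all of $\ICprime$ — or, equivalently and more symmetrically, directly as the period-type integral $\int_0^{i\infty}R_\nu(\zeta,w)u(w)\,d\mu(w)$. The growth condition on $P$ is then precisely the $\min\{1,\abs\zeta^{-1}\}$ bound from part (1), read off in the two limits $\zeta\to\infty$ and $\zeta\to0$ inside $\re\zeta>0$. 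It remains to derive the three-term functional equation. I would write the defining relation of $P$ at the three arguments $\zeta$, $\zeta+1$, $\zeta/(\zeta+1)$ and combine them using the cocycle relation for $v$ together with the matrix identity $\Matrix{1}{1}{0}{1}\Matrix{1}{0}{1}{1}=\Matrix{2}{1}{1}{1}$ (equivalently, the standard relation $T S T = S T^{-1} S$ in $\mathrm{PSL}(2,\IZ)$ rewritten so that the boundary terms telescope); the multiplier cocycle produces exactly the factors $v\left(\Matrix{1}{1}{0}{1}\right)^{-1}$ and $v\left(\Matrix{1}{0}{1}{1}\right)^{-1}$ and the automorphy factor produces $(\zeta+1)^{2\nu-1}$. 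The main obstacle I anticipate is bookkeeping: choosing the branch of $(\zeta+1)^{2\nu-1}$ and of the kernel consistently on $\ICprime$ so that all the cocycle and automorphy factors match up with the correct signs and roots of unity — in the half-integral-weight case the multiplier $v$ is only a projective representation of $\SL{\IZ}$ and the weight-$k$ factor $j(\gamma,z)^k$ involves a choice of square root, so the compatibility of $v$ with that choice (the "compatible multiplier" condition of \S\ref{B2}) must be invoked at each step; once the normalizations are fixed, the analytic estimates are routine adaptations of \cite{LZ01}.
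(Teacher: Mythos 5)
Your overall architecture does match the paper's: an LZ-type integral from $\zeta$ (resp.\ $0$) to $i\infty$ produces $f$ (resp.\ the period function), the combination $P=f-v(S)\,\zeta^{2\nu-1}f(-1/\zeta)$ collapses to the single integral from $0$ to $i\infty$, holomorphy across $\IR_{>0}$ plus holomorphy on $\IC\smallsetminus\IR$ glues to $\ICprime$, the growth exponents $0$ and $2\re{\nu}-1=-1$ come from a two-regime estimate using $\nu\in i\IR$, and the three-term equation follows from near-periodicity of $f$ together with the group relations and the multiplier cocycle (this is exactly Lemma~\ref{D3.7}/Remark~\ref{D3.19}, alternatively the geodesic splitting $\int_0^\infty=\int_{-1}^\infty+\int_0^{-1}$ of Lemma~\ref{D4.6}). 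However, there is one genuine gap: you never identify the correct integrand. You integrate ``$R_\nu(z,w)\,u(w)\,d\mu(w)$'' along a path, but a kernel against an invariant measure is not what is (or can be) used here; the object is the Maass--Selberg $1$-form $\eta_{-k}\big(R_{-k,\nu}(\cdot,\zeta),u\big)=\{\EE^+_{-k}R_{-k,\nu}(\cdot,\zeta),u\}^+-\{R_{-k,\nu}(\cdot,\zeta),\EE^-_{k}u\}^-$, built with the Maass raising/lowering operators. Its closedness when both entries are $\Delta_{\mp k}$-eigenfunctions with the same eigenvalue (Lemma~\ref{D2.16}) is the engine behind every contour manipulation you invoke: rotating the endpoint from $i\infty$ to $\gamma^{-1}(i\infty)$ to get near-periodicity and the $S$-relation (Lemma~\ref{D3.3}), splitting $\int_0^{i\infty}$ at $\zeta$ so the two pieces combine (Lemma~\ref{D4.12}), and deforming the contour to continue $P$ past $\IR_{>0}$ into $\ICprime$. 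For a plain kernel integral along a prescribed geodesic none of these steps is justified, and in weight $k\neq0$ there is no way to avoid the $\EE^\pm$-terms (they are also what makes the equivariance in Lemma~\ref{D2.28} hold with the single factor $(\mu(\gamma,\zeta))^{1-2\nu}$).

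Two further points, smaller but not purely cosmetic. First, the ``bookkeeping'' you defer is in the paper a set of actual hypotheses and lemmas: the factorization $(\mu(\gamma,\zeta)^2\cdot w)^{\frac12-\nu}=\mu(\gamma,\zeta)^{1-2\nu}w^{\frac12-\nu}$ only holds under the conditions of Lemma~\ref{D2.23}, which forces the restrictions $\re{\mu(\gamma,\zeta)}>0$ and $z$ on the geodesic ray toward $\gamma^{-1}(i\infty)$ in Lemma~\ref{D2.26}; the change-of-variables steps are valid precisely because the integration paths are such rays, and this must be checked (as in Lemma~\ref{D3.3}), not only ``fixed once''. Second, your construction is silent on the lower half-plane: the paper has to define $f$ on $\IH^-$ separately via $\tilde u(z)=u(\bar z)$ and the reflected Maass--Selberg form (Lemmas~\ref{D1.8}, \ref{D2.35}, \ref{D3.25}) so that the $S$-relation and the gluing work for $\im{\zeta}<0$ as well. (Minor: the near-periodicity constant is $a=v(T)$, not $v(T)^{-1}$, with the paper's conventions; and the Fourier expansion you start from is never needed, since the cusp-form decay of Definition~\ref{D1.1} already gives convergence.)
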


The proof of this theorem follows ideas presented above for modular cusp forms for positive even weight:
We use a Maass-Selberg differential form to define the kernel of integral transformations similarly as
$(\zeta-z)^{k-2} u_\text{h}(z) \, \dd z$ is used above.
We then describe properties of the determined nearly periodic functions and period functions, i.e.,
the images of our integral transformations.
Summarizing, we introduce the arrows of the following diagram and show that the diagram commutes:

{\centering
\setlength{\unitlength}{1.2cm}
\begin{picture}(9, 2)
\put(1, 1.5){Maass cusp forms of half-integer weight}
\put(1, 0.5){Nearly periodic functions}
\put(6, 0.5){Period functions}

\put(3,1.4){\vector(0,-1){0.6}}
\put(4,1.4){\vector(4,-1){2.6}}
\put(4.7,0.6){\vector(1, 0){1}}
\put(5.3,0.6){\vector(-1, 0){1}}
\end{picture}
}

It is our hope that the results of this paper form a first step towards a working Eichler-Shimura theory for Maass cusp forms of half-integral weight (for $\SL{\IZ}$) since we establish one direction of a possible bijection between Maass cusp forms and period functions.
We will discuss this and some other related questions briefly in \S\ref{F}.

\medskip

The paper is organized as follows: Section~\ref{B} contains the
preliminaries, like defining properly the group $\SL{\IZ}$ and its
linear fractional transformations, the multiplier systems and the
slash and double-slash notations. In \S\ref{D1} we define the Maass
cusp forms for half-integral weight. The next section introduces the
$R$-function and the Maass-Selberg form. The sections~\ref{D3} and
\ref{D4} contain the definitions of the integral transformations
from Maass cusp forms to nearly periodic functions on one hand and
to period functions on the other hand. These sections contain also
our main result in a more detailed version.
In \S\ref{G} we collect those results and prove Theorem~\ref{A.1}.
We use \S\ref{E} to compare and relate our integral transforms to the one appearing in
the setting of the classical modular cusp forms. The remaining
Section~\ref{F} contains a short discussion and outlook.

\section{Preliminaries}
\label{B}
\subsection{The matrix group $\SL{\IZ}$ and its linear fractional transformations}
\label{B1} Let $\SL{\IR}$ denote the group of $2 \times 2$ matrices
with real entries and determinant~$1$. The subgroup $\SL{\IZ}\subset
\SL{\IR}$ denotes the \emph{full modular group}, that is the
subgroup of matrices with integer entries. It is generated by
\begin{equation}
\label{B.1}
S = \Matrix{0}{-1}{1}{0} \quad \text{and} \quad T = \Matrix{1}{1}{0}{1}.
\end{equation}
These satisfy
\begin{equation}
\label{B.5}
S^2 = (ST)^3 = \minusid,
\end{equation}
where $\minusid \in \SL{\IZ}$ is the matrix with $-1$ on the diagonal and $0$ on the off-diagonal entries.
$\id$ denotes the identity matrix.
We denote
\begin{equation}
\label{B.2}
T^\prime := TST = \Matrix{1}{0}{1}{1}.
\end{equation}
Note that
\begin{equation}
\label{B.6}
ST = \Matrix{0}{-1}{1}{1}
\quad \text{and} \quad
STST = T^{-1}S = \Matrix{-1}{-1}{1}{0}.
\end{equation}

The group $\SL{\IR}$ acts on the upper half-plane $\IH = \{ z \in \IC; \; \im{z} >0\}$ and its boundary $\IP_\IR = \IR \cup \{\infty\}$ and the lower half-plane $\IH^- = \{z \in \IC;\; \im{z} < 0\}$ by \emph{fractional linear transformations}
\begin{equation}
\label{B.3} \Matrix{a}{b}{c}{d} \, z  :=
\begin{cases}
\frac{a}{c} & \text{if $z = \infty$,} \\
\infty & \text{if $z = -\frac{d}{c}$ with $c \neq 0$, and} \\
\frac{az+b}{cz+d} &\text{otherwise}.
\end{cases}
\end{equation}
We also need the following $\mu$-function:
\begin{equation}
\label{B.13}
\mu:\SL{\IR} \times \IC \to \IC; \qquad \mu\left( \Matrix{a}{b}{c}{d},z\right) := cz+d.
\end{equation}
Obviously $\mu$ satisfies the cocycle-relation
\[
\mu(\gamma\delta,z) = \mu(\gamma,\delta \,z) \, \mu(\delta,z)
\]
for every $\gamma, \delta \in \SL{\IR}$.

Moreover, we have
\begin{equation}
\label{B.4}
\begin{split}
&\im{\gamma  z} = \frac{\im{z}}{\abs{\mu(\gamma,z)}^2}, \quad
\frac{\dd}{\dd z} \gamma z = \frac{1}{(\mu(\gamma,z))^2}, \\
&\frac{\dd}{\dd \bar{z}} \gamma \bar{z} = \frac{1}{(\mu(\gamma,\bar{z}))^2}
\quad \text{and} \quad
\gamma \zeta - \gamma z = \frac{\zeta - z}{\mu(\gamma,\zeta) \, \mu(\gamma,z)}
\end{split}
\end{equation}
for every $\gamma \in \SL{\IR}$.

\subsection{Multiplier systems}
\label{B2}
We call a function $ v:\SL{\IZ} \to \IC_{\neq 0}$ \emph{multiplier} or \emph{multiplier system} compatible with the half-integral \emph{weight} $k$ if $v$ satisfies
\begin{equation}
\label{B.10}
v(\gamma \delta) \, e^{ik \arg{\mu(\gamma\delta,z)}}
=
v(\gamma)v(\delta) \,e^{ik \arg{\mu(\gamma,\delta\,z)}} e^{ik \arg{\mu(\delta,z)}}
\end{equation}
for every $\gamma, \delta \in \SL{\IZ}$ and $z \in \IH$.

\begin{remark}
\label{B.9}
\begin{enumerate}
\item
The range of $\arg{\cdot}$ is $-\pi < \arg{z} \leq \pi$ for all $z \in \IC_{\neq0}$.
\item
Condition~\eqref{B.10} implies that the system of equations
\begin{equation}
\label{B.7} f(\gamma z) = v(\gamma) \,e^{ik \arg{\mu(\gamma,z)}}\, f(z) \qquad (z \in \IH, \gamma \in \SL{\IZ}),
\end{equation}
allows non-zero solutions $f:\IH \to \IC$.
\item
We have in particular
\begin{equation}
\label{B.11} v\big(\minusid\big) = e^{-ik\pi},
\end{equation}
since \eqref{B.7} with $\gamma = \minusid$ implies $v\big(\minusid\big) \,e^{ik \arg{-1}} = 1$, if $f$ does not vanish everywhere.
\end{enumerate}
\end{remark}

\subsection{The slash and double-slash notations}
\label{B3}
We define arbitrary powers $z^s$ with (possibly complex) $s$ by using the standard branch
of the logarithm:
$z^s =\abs{z}^s \, e^{is \arg{z}}$ with $\arg{z} \in (-\pi,\pi]$ for every
$z \in \IC_{\neq 0}$.

We introduce the \emph{slash} and the \emph{double-slash} notations.
Let $k \in \frac{1}{2}\IZ$, $\nu \in \IC$ and $v$ be a multiplier.
For $f:\IH \to \IC$ and $\gamma \in \SL{\IZ}$ we define
\begin{equation}
\label{B.12}
\begin{split}
\left(f\big|_k^v \gamma \right) (z)
&:=
e^{-ik \arg{\mu(\gamma,z)}} \, v(\gamma)^{-1} \, f(\gamma \,z)  \qquad \text{and} \\
\left(f \big\|_{\nu}^v \gamma \right)(z)
&:=
v(\gamma)^{-1}  \big(\mu(\gamma, z)\big)^{2\nu-1} \, f(\gamma \, z)
\end{split}
\end{equation}
for every $z \in \IH$.
For example \eqref{B.7} reads as $f\big|_k^{v} \gamma  = f$.

We define the slash and double-slash notations also for functions
$f:\IH^- \to \IC$ on the lower half-plane, since $\gamma\, z \in
\IH^-$ in \eqref{B.3} for every $\gamma \in \SL{\IZ}$ and $z \in
\IH^-$.
%The argument function $\arg{\mu(\gamma,z)}$ is as well defined in this situation.

As slight abuse of notation, we also use the slash notation
\begin{equation}
\label{B.15}
\left(f\big|_k^1 \gamma \right) (z) = e^{-ik \arg{\mu(\gamma,z)}} \, f(\gamma \,z)
\end{equation}
for matrices $\gamma \in \SL{\IR}$.

Consider the subset $\SL{\IZ}^+ \subset \SL{\IZ}$, containing all
matrices $\gamma \in \SL{\IZ}$ with only nonnegative entries, i.e.,
all $\Matrix{a}{b}{c}{d} \in \SL{\IZ}$ satisfying $a,b,c,d \geq 0$.
These matrices have the property that they map the cut-plane
$\ICprime = \IC \smallsetminus (-\infty,0]$ into itself: for every
$z \in \ICprime$ and $\gamma \in \SL{\IZ}^+$ we have $\gamma z \in
\ICprime$. The slash and double-slash notations in \eqref{B.12} are
also well defined for functions $f:\ICprime \to \IH$ and all $\gamma
\in \SL{\IZ}^+$. For given real $z$ we may even extend the slash and
double-slash notations to certain matrices $\gamma \in \SL{\IZ}$
which satisfy $\mu(\gamma,z)>0$ on occasion.

\begin{remark}
\label{B.14}
The slash notation in \eqref{B.12} can be viewed as a group operation of $\SL{\IZ}$
on the space of functions on the upper half-plane.
Indeed, relation~\eqref{B.10} implies that
$f\big|_k^v \big(\gamma\delta\big)
= \left(f\big|_k^v \gamma\right)\big|_k^v\delta$ holds.

The double-slash notation in \eqref{B.12} is, as the name indicates,
an abbreviation for the given expression.
It is not a group operation in general.
The same is true for the slash-notation except in the case mentioned above.
\end{remark}

%%%%%%%%%%%%%%%%%%%%%%%%%%%%%%%%%%%%%

\section{Maass Cusp Forms of Half-Integral Weight and Maass Operators}
\label{D1}
In this section, we define Maass cusp form and useful Maass operators that will be used later.
As usual, we write $z = x + iy$ for complex $z$ with real part $x$ and imaginary part $y$.

\begin{definition}
\label{D1.1}
Let $k$ be a half-integral weight and $v: \SL{\IZ} \to \IC_{\neq
0}$ a compatible multiplier system. A \emph{Maass cusp
form} of weight $k$ and multiplier $v$ for $\SL{\IZ}$ is a
real-analytic function $u:\IH \to \IC$ satisfying
\begin{enumerate}
\item \label{D1.1.1}
$u\big|_k^v\gamma =u$ for every $\gamma \in \SL{\IZ}$,
\item \label{D1.1.2}
$u$ is an eigenfunction of the Laplace operator $\Delta_k$ with
eigenvalue $\lambda$, i.e., $\Delta_k u = \lambda \, u$ where
\begin{equation}
\label{D1.2} \Delta_k = -y^2 \left(\partial^2_x + \partial^2_y
\right) + ik y \partial_x
\end{equation}
with $z= x+iy \in \IH$.
\item \label{D1.1.3}
$u$ satisfies the growth condition $u(z) = \OO{y^c}$ as $y \to
\infty$ for every $c \in \IR$.
\end{enumerate}
\end{definition}

It is known, see for example \cite{Br94}, that the eigenvalue
$\lambda$ is real.
It is convenient to write $\lambda = \frac{1}{4} - \nu^2$
with suitable \emph{spectral parameter} $\nu \in \IR \cup
i\IR$.

In the following lemma, we extend the definition of the Maass cusp
form to the lower half-plane by considering the conjugate of the
form defined on the upper half-plane.
\begin{lemma}
\label{D1.8} Let $u$ be a Maass cusp form of weight $k$, multiplier
system $v$ and eigenvalue $\lambda$.
Defining $\tilde{u}: \IH^-\to
\IC$; $z \mapsto \tilde{u}(z):= u(\bar{z})$ for a Maass cusp form
$u$ defines a real-analytic function on the lower half-plane which
satisfies
\begin{enumerate}
\item \label{D1.8.1}
$\tilde{u}\big|_{-k}^v\gamma =\tilde{u}$ for every $\gamma \in
\SL{\IZ}$,
\item \label{D1.8.2}
$\tilde{u}$ is an eigenfunction of the Laplace operator
$\Delta_{-k}$ with eigenvalue $\lambda$, and
\item \label{D1.8.3}
$\tilde{u}$ satisfies the growth condition $\tilde{u}(z) =
\OO{|y|^c}$ as $y \to -\infty$ for every $c \in \IR$.
\end{enumerate}
\end{lemma}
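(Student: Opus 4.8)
The plan is to verify each of the three asserted properties by a direct computation, exploiting the fact that $\tilde u(z) = u(\bar z)$ is obtained from $u$ by precomposition with complex conjugation, and that conjugation intertwines the relevant structures on $\IH$ with those on $\IH^-$. First, real-analyticity of $\tilde u$ is immediate: if $u(x+iy)$ is real-analytic in $(x,y)$ on $\IH$, then $\tilde u(x+iy) = u(x-iy)$ is real-analytic in $(x,y)$ on $\IH^-$, being the composition of $u$ with the real-analytic map $(x,y)\mapsto(x,-y)$.

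For property~\eqref{D1.8.1}, the key observation is how $\mu$ and $\arg$ behave under conjugation. For $\gamma = \Matrix{a}{b}{c}{d} \in \SL{\IZ}$ with real (integer) entries and $z \in \IH^-$ we have $\gamma z \in \IH^-$, and $\overline{\gamma z} = \gamma \bar z$ since $\gamma$ has real entries; moreover $\mu(\gamma, z) = c z + d = \overline{c\bar z + d} = \overline{\mu(\gamma,\bar z)}$, so $\arg{\mu(\gamma,z)} = -\arg{\mu(\gamma,\bar z)}$ (using that $\mu(\gamma,\bar z)$ is generically not negative real; the exceptional real points form a discrete set and can be handled by continuity). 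Hence
\[
\big(\tilde u \big|_{-k}^v \gamma\big)(z)
= e^{ik\arg{\mu(\gamma,z)}}\, v(\gamma)^{-1}\, \tilde u(\gamma z)
= e^{-ik\arg{\mu(\gamma,\bar z)}}\, v(\gamma)^{-1}\, u(\gamma \bar z)
= \big(u\big|_{k}^{v}\gamma\big)(\bar z) = u(\bar z) = \tilde u(z),
\]
where in the last step we used Definition~\ref{D1.1}\eqref{D1.1.1}. One should check that the same compatible multiplier $v$ works for weight $-k$: since $\arg{\mu(\gamma,\bar z)} = -\arg{\mu(\gamma, z)}$ the cocycle identity~\eqref{B.10} for $(k,v)$ on $\IH$ transforms into the corresponding identity for $(-k,v)$ evaluated on conjugates, so $v$ is indeed $(-k)$-compatible and the computation above is consistent.

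For property~\eqref{D1.8.2}, I would write $\Delta_k$ in the coordinates $z = x+iy$ as in~\eqref{D1.2} and observe that the substitution $y \mapsto -y$ sends $\partial_y \mapsto -\partial_y$, hence $\partial_y^2 \mapsto \partial_y^2$, while $-y^2(\partial_x^2+\partial_y^2) \mapsto -y^2(\partial_x^2+\partial_y^2)$ and the first-order term $iky\partial_x \mapsto -iky\partial_x$; thus $\Delta_k$ on $\IH$ pulls back under conjugation to $-y^2(\partial_x^2+\partial_y^2) - iky\partial_x = \Delta_{-k}$ on $\IH^-$. Applying this to $\Delta_k u = \lambda u$ gives $\Delta_{-k}\tilde u = \lambda \tilde u$ (the eigenvalue $\lambda$ is real, as noted after Definition~\ref{D1.1}, so no conjugation of $\lambda$ intervenes; in any case conjugation is only of the argument, not of the values of $u$). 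Finally, property~\eqref{D1.8.3} is purely a matter of the growth in $|y|$: as $y \to -\infty$ we have $\tilde u(x+iy) = u(x + i|y|)$ with $|y| \to \infty$, so the estimate $u(z) = \OO{|y|^c}$ from Definition~\ref{D1.1}\eqref{D1.1.3} transfers verbatim. The only mild subtlety throughout — and the one I would state carefully rather than the genuinely hard point, since there is none — is the branch issue for $\arg$ at the discrete set of points where $\mu(\gamma,z)$ crosses the negative reals; this is resolved by noting both sides of the transformation law are continuous (indeed real-analytic) and agree off that set.
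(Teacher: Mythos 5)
Your argument is essentially the paper's own proof: property (1) via $\overline{\gamma z}=\gamma\bar z$, $\arg{\mu(\gamma,\bar z)}=-\arg{\mu(\gamma,z)}$ and the weight-$k$ invariance of $u$; property (2) via the substitution $y\mapsto-y$ in $\Delta_{-k}$; property (3) directly from the definition. The one inaccurate point is your parenthetical on the branch of $\arg$: for fixed $\gamma=\Matrix{a}{b}{c}{d}$ the factor $\mu(\gamma,\bar z)$ is either never real on $\IH$ (if $c\neq0$) or identically equal to $d=\pm1$ (if $c=0$), so the exceptional locus is not a discrete set curable by continuity but rather the matrices with $c=0$, $d=-1$, for which both arguments equal $\pi$ and the claimed identity picks up a factor $e^{2\pi ik}$ --- a borderline case that the paper's proof (which invokes $\arg{\bar\zeta}=-\arg{\zeta}$ only for $\zeta\notin(-\infty,0]$) likewise passes over in silence.
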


\begin{proof}
Using the identity $\arg{\bar{\zeta}} = - \arg{\zeta}$, $\zeta \in
\IC \smallsetminus (-\infty, 0]$, the transformation property
follows immediately:
\[
\tilde{u}\left( \gamma \, z \right)
=
u\left( \gamma \, \bar{z} \right)
=
e^{ik\arg{\mu(\gamma,\bar{z})}} \, v(\gamma) \, u(\bar{z})
=
e^{i(-k)\arg{\mu(\gamma,z)}} \, v(\gamma) \, \tilde{u}(z)
\]
for every $z \in \IH^-$ and $\gamma \in
\SL{\IZ}$.
The substitution $y \mapsto -y$ (i.e.\ $z \mapsto
\bar{z}$) gives
\begin{align*}
&
\Delta_{-k} \tilde{u} (z) \\
&=
\left[ -y^2 \left(\partial^2_x + \partial^2_y \right) + i(-k) y \partial_x \right] u(\bar{z}) \\
&=
\left[ -(-y)^2 \left(\partial^2_x + (-1)^2 \partial^2_y \right) + i(-k) (-y) \partial_x \right] u(z)
\qquad (\text{using } y \mapsto -y)\\
&= \Delta_k \, u(z) = \lambda \, u(z) = \lambda \, u(\bar{z})
\qquad (\text{using } y \mapsto -y)\\
&= \lambda \, \tilde{u} (z)
\end{align*}
for $z \in \IH^-$. This shows the second property. The growth
condition follows directly from the definition $\tilde{u}(z) =
u(\bar{z})$.
\end{proof}

\medskip

The raising and lowering \emph{Maass operators} acting on the space
of cusp forms of given weight $k$, multiplier $v$, and eigenvalue
$\lambda$ are given by
\begin{equation}
\label{D1.4} \EE^\pm_{k} = \pm 2iy\partial_x +2y\partial_y \pm k.
\end{equation}
Equivalently, it is sometimes convenient to write
\begin{equation}
\label{D1.5} \EE^+_k = 4iy\partial_z +k \quad \text{and} \quad
\EE^-_k = -4iy\partial_{\bar{z}} -k.
\end{equation}
They satisfy the identity
\begin{equation}
\label{D1.9}
\EE^\pm_{k \mp 2} \EE^\mp_k = -4\Delta_k - k(k \mp 2).
\end{equation}
Thus if $u:\IH \to \IC$ is an eigenfunction of $\Delta_k$ with
spectral value $\nu$, i.e.~$\Delta_k u = \left( \frac{1}{4}-\nu^2
\right) u$, then $u$ satisfies
\begin{equation}
\label{D1.3}
\EE^\pm_{k \mp 2} \EE^\mp_k  \, u = \big(1+2\nu \mp k \big)\big(-1+2\nu \pm k \big) u.
\end{equation}

\medskip

The slash notation defined in \eqref{B.12} commutes with the Laplace
operator,
\begin{equation}
\label{D1.6} \Delta_k\big( f\big|_{k}^v \gamma \big) =
\big(\Delta_kf\big)\big|_{k}^v \gamma,
\end{equation}
and interacts as follows with the Maass-operators
\begin{equation}
\label{D1.7} \EE^\pm_{k}\big( f\big|_{k}^v \gamma \big) =
\big(\EE^\pm_{k}f\big)\big|_{k\pm 2}^v \gamma \qquad(\gamma \in
\SL{\IZ}),
\end{equation}
for every $k\in\IR$ and $u$ real-analytic.

\section{The Maass-Selberg Differential Form and the $R$-function}
\label{D2} We need to define the Maass-Selberg differential form
that will be used later to define the kernel of the associated
integrals of the Maass cusp forms. First we define what is known as
the $R$-function. It will play an important role in the construction
of the kernel.

\subsection{The $R$-function}
\label{D2.32}
We define $h(z):=\im{z}$ for $z \in \IH$.
For $k\in \frac{1}{2}\IZ$ and $\nu \in \IC$, it is easy to see that $h(z)$ is
real-analytic and positive for $z\in \IH$, and that $h$ satisfies
the differential equations
\begin{equation}
\label{D2.3}
\Delta_k h^{\frac{1}{2}-\nu} = \bigg(
\frac{1}{4} - \nu^2 \bigg) \, h^{\frac{1}{2}-\nu} \quad \text{and} \quad
\EE_{k}^\pm h^{\frac{1}{2}-\nu} = ( 1-2\nu \pm k ) \, h^{\frac{1}{2}-\nu}.
\end{equation}

Define
\begin{equation}
\label{D2.5}
R_{k,\nu}(z,\zeta)
:=
\left(\frac{\sqrt{\zeta - z\,}}{\sqrt{\zeta - \bar{z}\,}}\right)^{-k}
    \left( \frac{\abs{\im{z}}}{(\zeta-z)(\zeta-\bar{z})} \right)^{\frac{1}{2}-\nu}.
\end{equation}
for $\zeta, z \in \IC$ such that
\begin{equation}
\label{D2.2}
\zeta-z, \, \zeta -\bar{z} \not\in \IR_{\leq 0}
\end{equation}
holds.

\begin{remark}
\label{D2.1}
The square roots $\sqrt{\zeta - z\,}$ and $\sqrt{\zeta - \bar{z}\,}$ on the right hand
side of \eqref{D2.5} are well defined since we require that $\zeta - z$ and $\zeta - \bar{z}$ are
in $\IC \smallsetminus \IR_{\leq 0}$.
The square roots in this situation, interpreted as principal square roots, are holomorphic.
\end{remark}

The $R$-function has the following properties:
\begin{proposition}
\label{D2.21}
\begin{enumerate}
\item
The function
\[
z \mapsto R_{k,\nu}(z,\zeta)
\]
is smooth in the real and imaginary part of $z$ if \eqref{D2.2} holds.
\item
The map
\[
\zeta \mapsto R_{k,\nu}(z,\zeta)
\]
is holomorphic on $\IC \smallsetminus \{z-r,\overline{z}-r;\; r \geq 0\}$.
\item
$R_{k,\nu}$ has the form
\begin{equation}
\label{D2.4}
\begin{split}
R_{k,\nu}(z,\zeta)
&=
e^{-ik\arg{\zeta-z}} \left( \frac{\im{z}}{(\zeta-z)(\zeta-\bar{z})} \right)^{\frac{1}{2}-\nu} \\
&=
\left( h^{\frac{1}{2}-\nu}\big|_k^1 \Matrix{0}{1}{-1}{\zeta}\right) (z)
\end{split}
\end{equation}
for real $\zeta$ and $z \in \IH$.
\item
Assume the usual restriction \eqref{D2.2} for $z$ and $\zeta$.
The function
\[
\IH \to \IC; \qquad z \mapsto R_{k,\nu}(z,\zeta)
\]
satisfies the differential equations
\begin{align}
\nonumber
\Delta_k R_{k,\nu}(\cdot,\zeta)
&=
\bigg( \frac{1}{4}-\nu^2 \bigg) \, R_{k,\nu}(\cdot,\zeta)    \quad \text{and}\\
\label{D2.6}
\EE^\pm_k  R_{k,\nu}(\cdot,\zeta)
&=
(1-2\nu \pm k) \,R_{k\pm 2,\nu}(\cdot,\zeta).
\end{align}

The function
\[
\IH^- \to \IC; \qquad z \mapsto R_{k,\nu}(z,\zeta)
\]
satisfies
\begin{align*}
\Delta_{-k} R_{k,\nu}(\cdot,\zeta)
&=
\bigg( \frac{1}{4}-\nu^2 \bigg) \, R_{k,\nu}(\cdot,\zeta)    \quad \text{and}\\
\EE^\pm_{-k}  R_{k,\nu}(\cdot,\zeta)
&=
(1-2\nu \pm k) \, R_{k\pm 2,\nu}(\cdot,\zeta).
\end{align*}
\end{enumerate}
\end{proposition}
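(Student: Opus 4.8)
The plan is to verify each of the four listed properties of $R_{k,\nu}$ directly from the defining formula \eqref{D2.5}, exploiting that each factor is built from the holomorphic functions $\sqrt{\zeta-z}$, $\sqrt{\zeta-\bar z}$ and the real-analytic function $h(z)=\im z$. For part~(1), I would note that under the restriction \eqref{D2.2} neither $\zeta-z$ nor $\zeta-\bar z$ lies on the branch cut $\IR_{\leq 0}$, so both principal square roots (and their reciprocals, and the complex powers $(\cdot)^{\frac12-\nu}$ of the product) are real-analytic in $z=x+iy$; since $|\im z|=y>0$ on $\IH$ (and $=-y>0$ on $\IH^-$) is itself real-analytic, the product is smooth in $x,y$. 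For part~(2), I would fix $z$ and observe that the set of bad $\zeta$ is precisely $\{z-r:r\geq 0\}\cup\{\bar z-r:r\geq 0\}$, the locus where $\zeta-z$ or $\zeta-\bar z$ hits $\IR_{\leq 0}$; off this set each factor is a holomorphic function of $\zeta$ by Remark~\ref{D2.1}, hence so is the product.

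For part~(3), take $\zeta\in\IR$ and $z\in\IH$. Then $\zeta-z$ and $\zeta-\bar z$ are complex conjugates of each other, both with nonzero imaginary part, so $\arg(\zeta-\bar z)=-\arg(\zeta-z)$ and $|\zeta-z|=|\zeta-\bar z|$; consequently the quotient $\sqrt{\zeta-z}/\sqrt{\zeta-\bar z}$ has modulus $1$ and argument $\arg(\zeta-z)$, giving $\bigl(\sqrt{\zeta-z}/\sqrt{\zeta-\bar z}\bigr)^{-k}=e^{-ik\arg(\zeta-z)}$, which yields the first line of \eqref{D2.4} (with $|\im z|=\im z$). For the second line I would compute $\mu\!\left(\Matrix{0}{1}{-1}{\zeta},z\right)=-z+\zeta=\zeta-z$ and $\Matrix{0}{1}{-1}{\zeta}z=\frac{1}{\zeta-z}$, so by \eqref{B.15} and \eqref{B.4},
\[
\left(h^{\frac12-\nu}\big|_k^1\Matrix{0}{1}{-1}{\zeta}\right)(z)
= e^{-ik\arg{\zeta-z}}\,\left(\im\!\frac{1}{\zeta-z}\right)^{\frac12-\nu}
= e^{-ik\arg{\zeta-z}}\,\left(\frac{\im z}{|\zeta-z|^2}\right)^{\frac12-\nu},
\]
and $|\zeta-z|^2=(\zeta-z)(\zeta-\bar z)$ since $\zeta$ is real; this matches the first line.

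For part~(4), the cleanest route is to combine part~(3) with the equivariance relations \eqref{D1.6} and \eqref{D1.7}: for real $\zeta$ and $z\in\IH$, $R_{k,\nu}(\cdot,\zeta)=h^{\frac12-\nu}\big|_k^1\gamma_\zeta$ with $\gamma_\zeta=\Matrix{0}{1}{-1}{\zeta}$, so \eqref{D1.6} together with the first equation of \eqref{D2.3} gives $\Delta_k R_{k,\nu}(\cdot,\zeta)=(\Delta_k h^{\frac12-\nu})\big|_k^1\gamma_\zeta=(\frac14-\nu^2)R_{k,\nu}(\cdot,\zeta)$, and \eqref{D1.7} with the second equation of \eqref{D2.3} gives the raising/lowering identity $\EE^\pm_k R_{k,\nu}(\cdot,\zeta)=(1-2\nu\pm k)R_{k\pm2,\nu}(\cdot,\zeta)$; the relations \eqref{D1.6}--\eqref{D1.7} are stated for $\gamma\in\SL{\IZ}$ but the underlying computation only uses $\det\gamma_\zeta=1$ and holds verbatim for $\gamma_\zeta\in\SL{\IR}$ with the $|_k^1$-notation of \eqref{B.15}. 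Then both sides of each identity are, for fixed generic $z$, real-analytic (indeed holomorphic) in $\zeta$ on the connected set from part~(2), so the identities extend from real $\zeta$ to all $\zeta$ satisfying \eqref{D2.2} by the identity theorem; alternatively one differentiates \eqref{D2.5} directly, which is the fallback if the off-real-axis analytic continuation of the operators needs care. The lower half-plane statements follow by the same argument applied on $\IH^-$, using $h(z)=-\im z$ there together with Lemma~\ref{D1.8} (or directly, since $\Delta_{-k}$ and $\EE^\pm_{-k}$ annihilate/scale $(-\im z)^{\frac12-\nu}$ the same way). The main obstacle is the last point: justifying that the differential identities, verified on the real slice $\zeta\in\IR$ via the slash formalism, propagate to complex $\zeta$ — this is where I would invoke analyticity in $\zeta$ (part~(2)) and the identity theorem, being careful that $\Delta_k$ and $\EE^\pm_k$ act in the $z$ variable and commute with holomorphic dependence on the parameter $\zeta$.
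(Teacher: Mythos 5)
Your proposal is correct and follows essentially the same route as the paper: parts (1)--(2) read off from \eqref{D2.5}, part (3) via the argument identity \eqref{D2.31} and the slash representation \eqref{B.15}, and part (4) by combining the slash form with \eqref{D1.6}--\eqref{D1.7} and \eqref{D2.3}, then passing to $\IH^-$ by the substitution of Lemma~\ref{D1.8} and to complex $\zeta$ afterwards. Your only deviations are refinements the paper glosses over -- noting that \eqref{D1.6}--\eqref{D1.7} apply to the $\SL{\IR}$ matrix $\Matrix{0}{1}{-1}{\zeta}$ and justifying the extension from real to complex $\zeta$ by holomorphy in $\zeta$ and the identity theorem, where the paper simply remarks that $\zeta$ is a constant for the differential operators.
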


\begin{proof}
\begin{enumerate}
\item
For fixed $\zeta \in \IC$, assume that $z \in \IC$ satisfies
\eqref{D2.2}. It is then obvious from \eqref{D2.5} that the function
$z \mapsto R_{k,\nu}(z,\zeta)$ is smooth in the real and the
imaginary parts of $z$. (Observe that the values under the
square-roots are never negative by condition \eqref{D2.2}.\@)
\item
Fix $z \in \IC$ this time. Again, it is obvious from \eqref{D2.5}
that the function $\zeta \mapsto R_{k,\nu}(z,\zeta)$ is holomorphic
for all $\zeta \in \IC$ satisfying condition \eqref{D2.2}. Noticing
that $\zeta$ satisfies \eqref{D2.2} is equivalent to the fact that
$\zeta$ is an element of the ``two-cut plane'' $\IC \smallsetminus
\big(z + \IR_{\leq0}  \cup \overline{z} +\IR_{\leq0}\big)$. This
shows the second part of the proposition.
\item
The first equality follows by rewriting the right hand side of \eqref{D2.5} using the identity
\begin{equation}
\label{D2.31}
\frac{\sqrt{\zeta-z}}{\sqrt{\zeta-\bar{z}}}
= \frac{\zeta-z}{\sqrt{\zeta-z}\sqrt{\zeta-\bar{z}}}
= e^{i \arg{\zeta-z}},
\end{equation}
which is correct under the given assumptions $\zeta \in \IR$ and $z \in \IH$.
The second equality follows by the slash notation in \eqref{B.15}.
\item
We assume real $\zeta$ and $z \in \IH$ for the moment.
Combining the last expression of $R_{k,\nu}(z,\zeta)$ in \eqref{D2.4} with \eqref{D1.6} and then with \eqref{D2.3}
gives
\begin{align*}
\Delta_k R_{k,\nu}(\cdot,\zeta)
&=
\Delta_k \left( h^{\frac{1}{2}-\nu}\big|_k^1 \Matrix{0}{1}{-1}{\zeta} \right) \\
&=
\left( \Delta_k h^{\frac{1}{2}-\nu} \right)\big|_k^1 \Matrix{0}{1}{-1}{\zeta}  \\
&=
\big( \frac{1}{4} - \nu^2 \big) \, h^{\frac{1}{2}-\nu}\big|_k^1 \Matrix{0}{1}{-1}{\zeta} \\
&=
\big( \frac{1}{4} - \nu^2 \big) \, R_{k,\nu}(\cdot,\zeta).
\end{align*}
Analogously, just using \eqref{D1.7} instead of \eqref{D1.6} shows
that
\[
\EE^\pm_k  R_{k,\nu}(\cdot,\zeta)
=
(1-2\nu \pm k) R_{k\pm 2,\nu}(\cdot,\zeta).
\]

Next, using the substitution arguments in the proof of Lemma~\ref{D1.8} shows that
$R_{k,\nu}(z,\zeta)$ satisfies the stated properties for $z \in \IH^-$.

The last step is to extend $\zeta$ from real values to complex
values. This can be done since $\zeta$ is just a constant for the
differential operators. This shows that the stated differential
equations hold also for complex $\zeta$ as long as $z$ and $\zeta$
satisfy the condition \eqref{D2.2}.
\end{enumerate}
\end{proof}

\begin{remark}
\label{D2.24}
The $R$-function appeared first in \cite{LZ01}, where Lewis and Zagier introduced
\[
R_{0,-\frac{1}{2}}(z,\zeta) = \frac{y}{(x-\zeta)^2+y^2} =
\frac{i}{2}\left( \frac{1}{z-\zeta} - \frac{1}{\overline{z}-\zeta}
\right)
\]
in  \cite[p.211, above (2.6)]{LZ01}.
Their notation for the above expression was $R_\zeta(z)$.
\end{remark}

Before we describe the transformation law of
$R_{k,\nu}(z,\zeta)$, we need one trivial auxiliary lemma which will allow us to perform a certain
factorization in the proof of the forthcoming Lemma~\ref{D2.26}.
\begin{lemma}
\label{D2.23}
Let $z,w \in \ICprime$ and $\alpha \in \IC$ be complex numbers satisfying either
\begin{enumerate}
\item $z\in \IR_{>0}$ or
\item the product $zw\in \IR_{>0}$.
\end{enumerate}
Then the identity $\big(z \,w \big)^\alpha = z^\alpha \, w^\alpha$ holds.
\end{lemma}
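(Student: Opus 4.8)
The plan is to reduce the identity $(zw)^\alpha = z^\alpha w^\alpha$ to a statement about arguments, using the definition $\zeta^\alpha = \abs{\zeta}^\alpha e^{i\alpha\arg{\zeta}}$ from \S\ref{B3}. Since $\abs{zw}^\alpha = (\abs{z}\abs{w})^\alpha = \abs{z}^\alpha \abs{w}^\alpha$ holds for the positive real numbers $\abs{z},\abs{w}$, the identity $(zw)^\alpha = z^\alpha w^\alpha$ is equivalent to $e^{i\alpha\arg{zw}} = e^{i\alpha\arg{z}}e^{i\alpha\arg{w}} = e^{i\alpha(\arg{z}+\arg{w})}$, and since $\alpha \in \IC$ is arbitrary this in turn is equivalent to the additivity of the argument, $\arg{zw} = \arg{z} + \arg{w}$, \emph{as an equality of real numbers} (not merely modulo $2\pi$). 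So the whole content is: under hypothesis (i) or (ii), the principal arguments add exactly.

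First I would record that for $z, w \in \ICprime$ we have $\arg{z}, \arg{w} \in (-\pi, \pi)$ — the value $\pi$ is excluded precisely because the negative real axis is removed — so $\arg{z} + \arg{w} \in (-2\pi, 2\pi)$. In general $\arg{zw}$ and $\arg{z}+\arg{w}$ differ by an element of $2\pi\IZ$, hence by $0$ or $\pm 2\pi$. It therefore suffices to rule out the values $\pm 2\pi$, i.e.\ to show $\arg{z}+\arg{w} \in (-\pi, \pi]$ (in fact one gets the open interval, but $\pi$ would already force $zw$ to avoid the cut only marginally; I will just argue it lands in $(-\pi,\pi)$). In case (i), $z \in \IR_{>0}$ gives $\arg{z} = 0$, so $\arg{z} + \arg{w} = \arg{w} \in (-\pi,\pi)$ and the claim is immediate. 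In case (ii), $zw \in \IR_{>0}$ gives $\arg{zw} = 0$, so $\arg{z}+\arg{w} \in 2\pi\IZ$; combined with $\arg{z}+\arg{w} \in (-2\pi,2\pi)$ this forces $\arg{z}+\arg{w} = 0 = \arg{zw}$, again giving additivity.

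With $\arg{zw} = \arg{z} + \arg{w}$ established in both cases, I would conclude
\[
(zw)^\alpha = \abs{zw}^\alpha e^{i\alpha\arg{zw}} = \abs{z}^\alpha\abs{w}^\alpha e^{i\alpha\arg{z}}e^{i\alpha\arg{w}} = z^\alpha w^\alpha,
\]
which is the desired identity. There is essentially no obstacle here; the only subtlety worth a sentence is being careful that the hypotheses $z,w \in \ICprime$ genuinely exclude $\arg{} = \pi$, so that the sum of two arguments cannot reach $2\pi$ from below unless both are close to $\pi$, and in case (ii) the product then cannot be a positive real — which is exactly why hypothesis (ii) pins down the sum to $0$ rather than allowing $\pm 2\pi$. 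I would phrase the argument so this case distinction is transparent.
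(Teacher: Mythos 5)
Your proof is correct, and it takes a mildly different route from the paper's. You reduce everything to one statement, that the principal arguments add exactly, $\arg{zw} = \arg{z} + \arg{w}$ as real numbers: since $z,w \in \ICprime$ forces $\arg{z},\arg{w} \in (-\pi,\pi)$, the sum lies in $(-2\pi,2\pi)$ and the only possible discrepancies $\pm 2\pi$ are excluded (in case (1) because $\arg{z}=0$, in case (2) because $\arg{zw}=0$ pins the sum into $2\pi\IZ \cap (-2\pi,2\pi) = \{0\}$); the identity $(zw)^\alpha = z^\alpha w^\alpha$ then follows for every $\alpha$ simultaneously from $\zeta^\alpha = \abs{\zeta}^\alpha e^{i\alpha\arg{\zeta}}$ together with $\abs{zw}^\alpha=\abs{z}^\alpha\abs{w}^\alpha$ for positive reals. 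The paper instead argues in polar coordinates: case (1) is a direct factorization, and case (2) is reduced to case (1) by computing $(zw)^\alpha\, z^{-\alpha}$ and using that $zw \in \IR_{>0}$ and $z^{-1} \in \ICprime$, after recording the same key fact $\arg{z} = -\arg{w} \in (-\pi,\pi)$. Both arguments rest on the identical essential input, namely that $\ICprime$ excludes the negative real axis so no argument equals $\pi$ (precisely the failure mode described in Remark~\ref{D2.30}); yours handles both cases uniformly through the argument-additivity step, while the paper's avoids explicit ``mod $2\pi$'' bookkeeping at the price of a more computational case (2). One cosmetic point: your opening claim that the power identity is \emph{equivalent} to exact argument additivity is imprecise for a fixed $\alpha$ (e.g.\ for integer $\alpha$ the identity holds without additivity), but you only use the implication from additivity to the identity, which is valid, so nothing in the proof is affected.
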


\begin{proof}
\begin{enumerate}
\item
Assume that $z$ is real and positive and $w \in \ICprime$.
This ensures $\arg{w} = \arg{zw} \in (-\pi,\pi)$.
Writing $w$ in its polar coordinates gives
\[
\big(z \,w \big)^\alpha
=
\bigg( \big(z \abs{w}\big) \, e^{i\arg{w}} \bigg)^\alpha
=
z^\alpha \, \bigg( \abs{w} \, e^{i\arg{w}} \bigg)^\alpha
=
z^\alpha \, w^\alpha.
\]
\item
Assume that $zw$ is real and positive and both $z,w \in \ICprime$.
This ensures $\arg{z} = -\arg{w} \in (-\pi,\pi)$.
Writing $z$ and $w$ in its polar coordinates gives
\begin{align*}
\big(z \,w \big)^\alpha \; z^{-\alpha}
&=
\big( \abs{zw} \big)^\alpha \; z^{-\alpha}
=
\bigg( \abs{z} \, e^{i\arg{z}} \; \abs{w} \, e^{i\arg{w}} \bigg)^\alpha \; \bigg( \abs{z}^{-1} \, e^{-i\arg{z}} \bigg)^\alpha \\
&=
\bigg( \abs{z} \, e^{i\arg{z}} \; \abs{w} \, e^{i\arg{w}} \bigg)^\alpha \; \bigg( \abs{z}^{-1} \, e^{-i\arg{z}} \bigg)^\alpha
\quad \text{(using case (1))} \\
&=
\bigg( \abs{z} \, e^{i\arg{z}} \; \abs{w} \, e^{i\arg{w}} \;  \abs{z}^{-1} \, e^{-i\arg{z}} \bigg)^\alpha \\
&=
\bigg( \abs{w} \, e^{i\arg{w}} \bigg)^\alpha
=
w^\alpha,
\end{align*}
where we used that $zw \in \IR_{>0}$, $z^{-1} \in \ICprime$ implies
\[
\big(zw\, z^{-1}\big)^\alpha = (zw)^\alpha \, \big(z^{-1}\big)^\alpha
\]
as shown above.
\end{enumerate}
The identity $(z\, w)^\alpha = z^\alpha \, w^\alpha$ holds in both situations.
\end{proof}

\begin{remark}
\label{D2.30}
It is important that $z,w \not\in \IR_{<0}$ in the second case of the above auxiliary lemma.
If $z,w$ are both real and negative, then $zw$ itself is positive.
Due to the choice involved in the argument function, see Remark~\ref{B.9}, the arguments $\arg{z}=\pi$ and $\arg{w}=\pi$ are not anymore of opposite sign: $\arg{z} \neq -\arg{w}$.
Hence the factorization in the proof of the auxiliary lemma does not work anymore.
\end{remark}

In what follows, we show the transformation law of
$R_{k,\nu}(\zeta,z)$.
\begin{lemma}
\label{D2.26} Let $\gamma \in \SL{\IZ}$, $\zeta, z \in \IC$
satisfying~\eqref{D2.2} and $\mu(\gamma,\zeta), \, \mu(\gamma,z) \in
\ICprime$ with $\re{\mu(\gamma,\zeta)} > 0$. Moreover, assume that
$\zeta$ and $z$ satisfy one of the following three conditions:
\begin{enumerate}
\item $\mu(\gamma,\zeta) \in \IR_{>0}$,
\item $\zeta \in \IH$ and $\gamma z \in \gamma \zeta  + i\IR_{> 0}$ or
\item $\zeta \in \IH^-$ and $\gamma \bar{z} \in \gamma \bar{\zeta}  + i\IR_{> 0}$.
\end{enumerate}
Then, the function
$(\zeta,z) \mapsto R_{k,\nu}(z,\zeta)$ satisfies the transformation
formula
\begin{equation}
\label{D2.27} R_{k,\nu}(\gamma z,\gamma \zeta)
=
e^{ik\arg{\mu(\gamma,z)}} \big(\mu(\gamma,\zeta)\big)^{1-2\nu} \, R_{k,\nu}(z,\zeta).
\end{equation}
\end{lemma}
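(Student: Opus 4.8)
The plan is to reduce the transformation formula \eqref{D2.27} to the already-established equality
\[
\gamma\zeta - \gamma z = \frac{\zeta - z}{\mu(\gamma,\zeta)\,\mu(\gamma,z)}
\]
and its conjugate analogue from \eqref{B.4}, together with the defining expression \eqref{D2.5} of $R_{k,\nu}$, and then to control the branch-of-the-logarithm bookkeeping via Lemma~\ref{D2.23}. First I would write out $R_{k,\nu}(\gamma z, \gamma\zeta)$ directly from \eqref{D2.5}: its factors are $\bigl(\sqrt{\gamma\zeta - \gamma z}/\sqrt{\gamma\zeta - \overline{\gamma z}}\bigr)^{-k}$ and $\bigl(\abs{\im{\gamma z}}/((\gamma\zeta - \gamma z)(\gamma\zeta - \overline{\gamma z}))\bigr)^{\frac12 - \nu}$. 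Using $\im{\gamma z} = \im{z}/\abs{\mu(\gamma,z)}^2$, the relation $\gamma\zeta - \gamma z = (\zeta - z)/(\mu(\gamma,\zeta)\mu(\gamma,z))$, and — since for $z\in\IH$ one has $\overline{\gamma z} = \gamma\bar z$ (with $\gamma$ acting on $\IH^-$ as in \S\ref{B1}) and $\mu(\gamma,\bar z) = \overline{\mu(\gamma,z)}$ — also $\gamma\zeta - \overline{\gamma z} = (\zeta - \bar z)/(\mu(\gamma,\zeta)\mu(\gamma,\bar z))$, I would substitute everything in. Formally the $\mu(\gamma,\zeta)$ and $\mu(\gamma,z)$ factors collect to give the claimed prefactor $(\mu(\gamma,\zeta))^{1-2\nu}e^{ik\arg{\mu(\gamma,z)}}$ times $R_{k,\nu}(z,\zeta)$; the real content is justifying that the powers split the way this formal manipulation requires.

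The key steps, in order: (i) record the conjugation identities $\overline{\gamma z} = \gamma\bar z$ and $\mu(\gamma,\bar z) = \overline{\mu(\gamma,z)}$ for $\gamma\in\SL{\IZ}$, $z\in\IH$, and note the case $z\in\IH^-$ is handled symmetrically (this is where hypotheses (2) and (3) of the lemma pair up); (ii) substitute the four ingredients from \eqref{B.4} into \eqref{D2.5} evaluated at $(\gamma z,\gamma\zeta)$; (iii) handle the power $\bigl(\cdots\bigr)^{\frac12-\nu}$: the argument of that power is $\abs{\im z}\,\mu(\gamma,\zeta)^2\,\mu(\gamma,z)\mu(\gamma,\bar z)\big/\bigl((\zeta - z)(\zeta - \bar z)\bigr)$, and I need $\bigl(A\cdot B\bigr)^{\frac12-\nu} = A^{\frac12-\nu}B^{\frac12-\nu}$ with $A = \abs{\im z}/((\zeta - z)(\zeta - \bar z))$ and the remaining $\mu$-product as $B$ — this is exactly where Lemma~\ref{D2.23} is invoked, using that $\re{\mu(\gamma,\zeta)} > 0$ (so $\mu(\gamma,\zeta)^2$ lies in $\ICprime$, in fact its argument has the right size) and the case hypotheses force the relevant product to lie in $\IR_{>0}$; (iv) handle the half-weight factor: show $\sqrt{\gamma\zeta - \gamma z}/\sqrt{\gamma\zeta - \overline{\gamma z}} = e^{i\arg{\gamma\zeta - \gamma z}}$ as in \eqref{D2.31}, rewrite $\gamma\zeta - \gamma z$ via \eqref{B.4}, and peel off the argument contributions of $\mu(\gamma,\zeta)$ and $\mu(\gamma,z)$ — the $\mu(\gamma,\zeta)$ contribution is killed because $-k$ times it combines with the $2\nu-2$... more carefully, the $\mu(\gamma,\zeta)$ argument appears with total exponent $-k/2 \cdot(\text{from weight}) $ matched against the $(\frac12-\nu)$-power, leaving a clean $\mu(\gamma,\zeta)^{1-2\nu}$, while the $\mu(\gamma,z)$ argument survives only in the factor $e^{ik\arg{\mu(\gamma,z)}}$; (v) reassemble and compare with $R_{k,\nu}(z,\zeta)$.

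The main obstacle is unquestionably step (iii)–(iv), the branch-cut accounting: $(AB)^\alpha = A^\alpha B^\alpha$ is false in general for complex $A,B$, and the three alternative hypotheses in the lemma are precisely the minimal conditions under which Lemma~\ref{D2.23} applies to each product that arises. I would therefore be careful to track which product is being split at each stage, verify in each of the three cases that either the relevant base is positive real or the relevant product is positive real, and check that $\re{\mu(\gamma,\zeta)} > 0$ indeed places $\mu(\gamma,\zeta)$ (and the $\im z$-dependent base) in $\ICprime$ so that the principal powers used in \eqref{D2.5} are the ones Lemma~\ref{D2.23} governs. A secondary, more routine check is the consistency of the square-root definition of $R_{k,\nu}$ at the transformed point — i.e.\ that $\gamma\zeta - \gamma z$ and $\gamma\zeta - \overline{\gamma z}$ satisfy \eqref{D2.2}, which follows from the factorizations in step (ii) since numerators $\zeta - z$, $\zeta - \bar z$ satisfy \eqref{D2.2} by hypothesis and the denominators have positive-real-type factors. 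Once the bookkeeping is pinned down the rest is substitution, and I would present it as a single chain of equalities starting from \eqref{D2.5} at $(\gamma z,\gamma\zeta)$ and ending at the right-hand side of \eqref{D2.27}.
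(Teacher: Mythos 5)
Your overall strategy is the same as the paper's: write $R_{k,\nu}(\gamma z,\gamma\zeta)$ from \eqref{D2.5}, push the identities \eqref{B.4} through both factors, and use Lemma~\ref{D2.23} to justify the splitting of the $(\tfrac12-\nu)$-power, with case (1) of that lemma covering $\mu(\gamma,\zeta)\in\IR_{>0}$ and case (2) covering the geodesic-ray hypotheses, where $(\gamma\zeta-\gamma z)(\gamma\zeta-\gamma\bar z)$ is a product of purely imaginary numbers and the whole base is positive. Your step (iii) is essentially the paper's key identity \eqref{D2.13}, up to a bookkeeping slip: since $\abs{\im{\gamma z}}=\abs{\im{z}}/\abs{\mu(\gamma,z)}^2$ and $\mu(\gamma,z)\mu(\gamma,\bar z)=\abs{\mu(\gamma,z)}^2$, the base is $\abs{\im{z}}\,\mu(\gamma,\zeta)^2\big/\bigl((\zeta-z)(\zeta-\bar z)\bigr)$; the extra $\mu(\gamma,z)\mu(\gamma,\bar z)$ in your expression must cancel, so the factor $B$ to which Lemma~\ref{D2.23} is applied is exactly $\mu(\gamma,\zeta)^2$.

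Step (iv), however, contains a genuine error. The identity $\sqrt{\gamma\zeta-\gamma z}/\sqrt{\gamma\zeta-\overline{\gamma z}}=e^{i\arg{\gamma\zeta-\gamma z}}$ is \eqref{D2.31} applied at the transformed point, and \eqref{D2.31} requires the first variable to be \emph{real} (so that $\gamma\zeta-\gamma\bar z=\overline{\gamma\zeta-\gamma z}$). Here $\gamma\zeta$ is in general not real, and then the left-hand side is not even unimodular: in your case (2) one has $\gamma\zeta-\gamma z=-it$ and $\gamma\zeta-\gamma\bar z=it'$ with $t'=t+2\im{\gamma\zeta}\neq t$, so the ratio has modulus $\sqrt{t/t'}\neq 1$ while $e^{i\arg{\gamma\zeta-\gamma z}}$ has modulus $1$. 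Compounding this, your accounting of $\mu(\gamma,\zeta)$ is off: the full factor $\mu(\gamma,\zeta)^{1-2\nu}$ is already produced by step (iii), so the half-weight factor must contribute \emph{no} power of $\mu(\gamma,\zeta)$ at all, rather than an argument that ``combines with the $(\tfrac12-\nu)$-power.'' The correct treatment (the paper's identity \eqref{D2.25}) is to insert \eqref{B.4} under both square roots; the factor $\mu(\gamma,\zeta)$ occurs identically in numerator and denominator and cancels, leaving $\bigl(\sqrt{\mu(\gamma,\bar z)}/\sqrt{\mu(\gamma,z)}\bigr)^{-k}$ times the original ratio, and only at that point does a \eqref{D2.31}-type identity legitimately apply, namely to $w=\mu(\gamma,z)\in\ICprime$ with $\mu(\gamma,\bar z)=\overline{\mu(\gamma,z)}$, yielding precisely $e^{ik\arg{\mu(\gamma,z)}}$. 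As written, your step (iv) would either rest on the false unimodularity claim or double-count a power of $\mu(\gamma,\zeta)$, so the weight-factor half of the argument needs to be redone along these lines; the rest of your outline then goes through as in the paper.
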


\begin{proof}
Take $\gamma \in \SL{\IZ}$ and $\zeta, z \in \IC$ satisfying~\eqref{D2.2} and $\mu(\gamma,z) \in \ICprime$.

One key observation is the fact that the factorization
\begin{equation}
\label{D2.13}
\left( \frac{\abs{\im{\gamma z}}}{(\gamma \zeta- \gamma z)(\gamma \zeta- \gamma \bar{z})} \right)^{\frac{1}{2}-\nu}
=
\big(\mu(\gamma,\zeta)\big)^{1-2\nu}\, \left( \frac{\abs{\im{z}}}{(\zeta- z)(\zeta- \bar{z})} \right)^{\frac{1}{2}-\nu}
\end{equation}
holds if $\zeta$ and $z$ satisfy one of the additional assumptions.

First, we assume $\mu(\gamma,\zeta) \in \IR_{>0}$.
Using identities in \eqref{B.4} and Lemma~\ref{D2.23} gives
\begin{align*}
\left( \frac{\abs{\im{\gamma z}}}{(\gamma \zeta- \gamma z)(\gamma \zeta- \gamma \bar{z})} \right)^{\frac{1}{2}-\nu}
&=
\left( \frac{\frac{\abs{\im{z}}}{\mu(\gamma,z) \, \mu(\gamma,\bar{z})}}{\frac{\zeta- z}{\mu(\gamma,\zeta)\, \mu(\gamma,z)}
  \frac{\zeta- \bar{z}}{\mu(\gamma,\zeta) \, \mu(\gamma,\bar{z})}} \right)^{\frac{1}{2}-\nu} \\
&=
\left( \frac{\abs{\im{z}}}{\frac{\zeta- z}{\mu(\gamma,\zeta)} \frac{\zeta- \bar{z}}{\mu(\gamma,\zeta)}} \right)^{\frac{1}{2}-\nu} \\
&=
\big(\mu(\gamma,\zeta)\big)^{1-2\nu}\, \left( \frac{\abs{\im{z}}}{(\zeta- z)(\zeta- \bar{z})} \right)^{\frac{1}{2}-\nu}.
\end{align*}

Next, we assume the second case $\zeta \in \IH$ and $\gamma z \in \gamma \zeta  + i\IR_{> 0}$.
In particular, we have that $\gamma \zeta - \gamma z$ and also $\gamma \zeta - \gamma \bar{z}$ are non-vanishing purely imaginary:
\[
\gamma \zeta - \gamma z = -it
\qquad \text{and} \qquad
\gamma \zeta - \gamma z = it^\prime
\]
for some $t,t^\prime \in \IR_{\geq 0}$.
We have in fact $t^\prime = t + 2 \re{\gamma \zeta}$.
Hence the expression
\[
\frac{\abs{\im{\gamma z}}}{(\gamma \zeta- \gamma z)(\gamma \zeta- \gamma \bar{z})}
=
\frac{\abs{\im{\gamma z}}}{(-it) it^\prime }
=
\frac{\abs{\im{\gamma z}}}{t t^\prime}
\]
is positive.
On the other hand, we have
\[
\frac{\abs{\im{\gamma z}}}{(\gamma \zeta- \gamma z)(\gamma \zeta- \gamma \bar{z})}
=
\frac{\frac{\abs{\im{z}}}{\mu(\gamma,z) \, \mu(\gamma,\bar{z})}}{\frac{\zeta- z}{\mu(\gamma,\zeta)\, \mu(\gamma,z)}
  \frac{\zeta- \bar{z}}{\mu(\gamma,\zeta) \, \mu(\gamma,\bar{z})}}
= \frac{\abs{\im{z}}}{(\zeta- z) (\zeta- \bar{z})} \,
\big(\mu(\gamma,\zeta) \big)^{2}.
\]
We may apply Lemma~\ref{D2.23} to $\mu(\gamma,\zeta)^2$ and $\frac{\abs{\im{z}}}{(\zeta- z)(\zeta- \bar{z})}$ since the assumption \linebreak $\re{\mu(\gamma,\zeta)} > 0$ implies $\abs{\arg{\mu(\gamma,\zeta)}} \leq \frac{\pi}{2}$ and hence $\abs{\arg{\big(\mu(\gamma,\zeta) \big)^{2}}} < \pi$:
\begin{align*}
\left( \frac{\abs{\im{\gamma z}}}{(\gamma \zeta- \gamma z)(\gamma \zeta- \gamma \bar{z})} \right)^{\frac{1}{2}-\nu}
&=
\left( \frac{\abs{\im{z}}}{\frac{\zeta- z}{\mu(\gamma,\zeta)} \frac{\zeta- \bar{z}}{\mu(\gamma,\zeta)}} \right)^{\frac{1}{2}-\nu} \\
&=
\big(\mu(\gamma,\zeta)\big)^{1-2\nu}\, \left( \frac{\abs{\im{z}}}{(\zeta- z)(\zeta- \bar{z})} \right)^{\frac{1}{2}-\nu}.
\end{align*}

Finally, we assume the third case $\zeta \in \IH^-$ and $\gamma z \in \gamma \zeta  + i\IR_{> 0}$.
We show that \eqref{D2.13} holds by interchanging the role of $z$ and $\bar{z}$ in the calculation above.

This proves the identity \eqref{D2.13} for all three cases.

\smallskip

We need also the identity
\begin{equation}
\label{D2.25}
\left(\frac{\sqrt{\gamma \zeta - \gamma z\,}}{\sqrt{\gamma \zeta - \gamma \bar{z}\,}}\right)^{-k}
=
e^{ik\arg{\mu(\gamma,z)}} \, \left(\frac{\sqrt{\zeta- z\,}}{\sqrt{\zeta- \bar{z}\,}}\right)^{-k}.
\end{equation}
Indeed, it follows also by applying \eqref{B.4} and the assumption $\mu(\gamma,z) \in \ICprime$ as the following calculation shows:
\begin{align*}
\left(\frac{\sqrt{\gamma \zeta - \gamma z\,}}{\sqrt{\gamma \zeta - \gamma \bar{z}\,}}\right)^{-k}
&=
\left(\frac{\sqrt{\frac{\zeta- z}{\mu(\gamma,\zeta)\, \mu(\gamma,z)}\,}}{\sqrt{\frac{\zeta- \bar{z}}{\mu(\gamma,\zeta)\, \mu(\gamma,\bar{z})}\,}}\right)^{-k} \\
&=
\left(\frac{\sqrt{\mu(\gamma,\bar{z})\,}}{\sqrt{\mu(\gamma,z)\,}}\right)^{-k}
   \left(\frac{\sqrt{\zeta- z\,}}{\sqrt{\zeta- \bar{z}\,}}\right)^{-k} \\
&=
e^{ik\arg{\mu(\gamma,z)}} \, \left(\frac{\sqrt{\zeta- z\,}}{\sqrt{\zeta- \bar{z}\,}}\right)^{-k}.
\end{align*}
We also used \eqref{D2.31} and that $k \in \frac{1}{2}\IZ$ is real.

\smallskip

To finally prove the lemma we combine the identities \eqref{D2.13} and \eqref{D2.25}.
We have
\begin{align*}
&\!\!\!\!
R_{k,\nu}(\gamma z,\gamma \zeta) \\
&= \left(\frac{\sqrt{\gamma \zeta - \gamma z\,}}{\sqrt{\gamma \zeta - \gamma \bar{z}\,}}\right)^{-k}
  \, \left( \frac{\abs{\im{\gamma z}}}{(\gamma \zeta- \gamma z)(\gamma \zeta- \gamma \bar{z})} \right)^{\frac{1}{2}-\nu} \\
&=  e^{ik\arg{\mu(\gamma,z)}} \, \big(\mu(\gamma,\zeta)\big)^{1-2\nu} \;  \left(\frac{\sqrt{\zeta- z\,}}{\sqrt{\zeta- \bar{z}\,}}\right)^{-k}
  \, \left( \frac{\abs{\im{z}}}{(\zeta- z)(\zeta- \bar{z})} \right)^{\frac{1}{2}-\nu} \\
&=  e^{ik\arg{\mu(\gamma,z)}} \, \big(\mu(\gamma,\zeta)\big)^{1-2\nu} \; R_{k,\nu}(z,\zeta).
\end{align*}
\end{proof}

\begin{remark}
\label{D2.22} The second assumption on $\zeta$ and $z$ in
Lemma~\ref{D2.26} is: $\zeta \in \IH$ and $\gamma z \in \gamma \zeta
+ i\IR_{> 0}$. This is equivalent tp saying that $z$ lies in the
open geodesic ray connecting $\zeta \in \IH$ to the cusp
$\gamma^{-1}(i\infty)$. The third assumption can be rephrased
analogously: The third condition is equivalent with saying that
$\bar{z}$ lies in the geodesic ray connecting $\bar{\zeta} \in \IH$
to the cusp $\gamma^{-1}(i\infty)$.
\end{remark}

\subsection{The Maass-Selberg Differential Form}
\label{D2.33}
We recall differential forms presented in \cite{Mu03} and observe the action of Maass raising and lowering operators applied to those differential forms.

Let $f,g$ be real-analytic functions and write again $z =x+iy$. We
define
\begin{equation}
\label{D2.8} \big\{f,g\big\}^+(z) = f(z) g(z) \frac{\dd z}{y} \qquad
\text{and} \qquad \big\{f,g\big\}^-(z) = f(z) g(z) \frac{\dd \bar{z}}{y}.
\end{equation}

We extend the slash-notation to linear combinations of the differential forms $\{f,g\}^\pm$:
We define
\begin{equation}
\label{D2.7}
\begin{split}
\big\{f,g\big\}^+\big|_k^v\gamma (z)
&=e^{-ik \arg{\mu(\gamma,z)}}  \, v(\gamma)^{-1} \, f(\gamma z) g(\gamma z) \, \frac{\dd (\gamma z)}{\im{\gamma z}} \\
\big\{f,g\big\}^-\big|_k^v\gamma (z)
&=e^{-ik \arg{\mu(\gamma,z)}}  \, v(\gamma)^{-1} \, f(\gamma z) g(\gamma z) \, \frac{\dd \overline{(\gamma z)}}{\im{\gamma z}} ,
\end{split}
\end{equation}
and extend it in the obvious way to linear combinations.

\begin{lemma}
\label{D2.9} Let $v$ be a multiplier and $k, q \in \frac{1}{2}\IZ$.
\begin{enumerate}
%\item
%The $1$-forms $\{f,g\}^\pm$ are smooth.
%
\item
We have for any matrix $\gamma \in \SL{\IZ}$ that
\begin{equation}
\label{D2.10} \big\{f,g\big\}^\pm \big|_{k + q \mp 2}^v \gamma =
\begin{cases}
\big\{f \big|_k^v \gamma, g \big|_q^1 \gamma \big\}^\pm
    & \text{and}\\
\{f\big|_k^1 \gamma, g \big|_q^v \gamma \}^\pm.
\end{cases}
\end{equation}
\item
$\big\{f,g\big\}^\pm = \big\{g,f\big\}^\pm$.
\end{enumerate}
\end{lemma}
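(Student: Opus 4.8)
The statement splits into two independent assertions: a transformation law for the decorated forms $\{f,g\}^\pm$ under the slash operation \eqref{D2.7}, and the symmetry $\{f,g\}^\pm = \{g,f\}^\pm$. I would dispatch the second part first, since it is immediate: both $\{f,g\}^+$ and $\{f,g\}^-$ are defined in \eqref{D2.8} as the product $f(z)g(z)$ times a form ($\frac{\dd z}{y}$ or $\frac{\dd \bar z}{y}$) that does not involve $f$ or $g$, so commutativity of multiplication in $\IC$ gives $f(z)g(z) = g(z)f(z)$ and hence $\{f,g\}^\pm = \{g,f\}^\pm$ on the nose. No bookkeeping about weights is needed here because the form factor is untouched.

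For part (1), the plan is a direct computation starting from the definition \eqref{D2.7}. I would take $\gamma = \Matrix{a}{b}{c}{d} \in \SL{\IZ}$, write $\frac{\dd(\gamma z)}{\im{\gamma z}}$ explicitly using \eqref{B.4}: namely $\dd(\gamma z) = \mu(\gamma,z)^{-2}\,\dd z$ and $\im{\gamma z} = \im{z}\,\abs{\mu(\gamma,z)}^{-2}$, so that
\[
\frac{\dd(\gamma z)}{\im{\gamma z}} = \frac{\abs{\mu(\gamma,z)}^2}{\mu(\gamma,z)^2}\,\frac{\dd z}{\im{z}} = \frac{\overline{\mu(\gamma,z)}}{\mu(\gamma,z)}\,\frac{\dd z}{\im{z}} = e^{-2i\arg{\mu(\gamma,z)}}\,\frac{\dd z}{\im{z}}.
\]
Plugging this into $\{f,g\}^+\big|_{k+q\mp2}^v\gamma$ produces the scalar $e^{-i(k+q-2)\arg{\mu(\gamma,z)}}\,v(\gamma)^{-1}\,e^{-2i\arg{\mu(\gamma,z)}} = e^{-ik\arg{\mu(\gamma,z)}}\,e^{-iq\arg{\mu(\gamma,z)}}\,v(\gamma)^{-1}$ — wait, I need to recombine the exponents carefully: $-(k+q-2) - 2 = -(k+q)$, so I get $e^{-i(k+q)\arg{\mu(\gamma,z)}}v(\gamma)^{-1}$. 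I then split this scalar and the factor $f(\gamma z)g(\gamma z)$ either as $\big(e^{-ik\arg{\mu(\gamma,z)}}v(\gamma)^{-1}f(\gamma z)\big)\cdot\big(e^{-iq\arg{\mu(\gamma,z)}}g(\gamma z)\big)$, which is exactly $(f\big|_k^v\gamma)(z)\,(g\big|_q^1\gamma)(z)$ by \eqref{B.12} and \eqref{B.15}, or symmetrically with the $v(\gamma)^{-1}$ attached to $g$ instead; matched against $\frac{\dd z}{\im{z}}$ this is precisely $\{f\big|_k^v\gamma,\,g\big|_q^1\gamma\}^+$ respectively $\{f\big|_k^1\gamma,\,g\big|_q^v\gamma\}^+$. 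The minus case is identical upon replacing $\dd z$ by $\dd\bar z$ and using $\frac{\dd\overline{(\gamma z)}}{\im{\gamma z}} = e^{2i\arg{\mu(\gamma,z)}}\frac{\dd\bar z}{\im z}$, which flips the sign of the exponent from the form factor; tracking that this still yields $-(k+q)$ from the combination $-(k+q+2)+2$ confirms the $\mp 2$ shift in the weight is exactly what is needed to absorb it.

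The only genuine subtlety — and the step I would flag as the main thing to get right rather than an obstacle per se — is the careful accounting of the exponent of $e^{i\arg{\mu(\gamma,z)}}$: the weight of $\{f,g\}^\pm$ is $k+q\mp2$ precisely so that, after the $\mp2$ from differentiating (or conjugate-differentiating) $\gamma z$ and reintroducing $\abs{\mu}^2$, the leftover automorphy factor factors as weight $k$ on one argument and weight $q$ on the other. One should note that no multiplier relation \eqref{B.10} is invoked for $\{f,g\}^\pm$ itself — the slash on forms is defined directly by \eqref{D2.7}, not as an iterated group action — so the identity is purely a matter of rewriting a single-matrix expression, and holds without any cocycle hypothesis beyond what is already built into $v$ being a multiplier. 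I would present the $+$ case in full and remark that the $-$ case follows verbatim with $\dd z \rightsquigarrow \dd\bar z$ and the corresponding sign change.
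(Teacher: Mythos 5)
Your proposal is correct and follows essentially the same route as the paper: a direct computation from the definition \eqref{D2.7}, using \eqref{B.4} to write $\frac{\dd(\gamma z)}{\im{\gamma z}} = e^{-2i\arg{\mu(\gamma,z)}}\frac{\dd z}{\im{z}}$ (and its conjugate analogue), so that the weight $k+q\mp 2$ collapses to a factor $e^{-i(k+q)\arg{\mu(\gamma,z)}}v(\gamma)^{-1}$ which splits into weight-$k$ and weight-$q$ slashes in either of the two ways, while part (2) is immediate from commutativity. Your exponent bookkeeping for both the $+$ and $-$ cases matches the paper's calculation.
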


\begin{proof}
The last property follows directly from the definition in \eqref{D2.8}.

We prove \eqref{D2.10} by direct calculation, using the identities in \eqref{B.4}.
For example, we have
\begin{align*}
\big\{f,g\big\}^+ \big|_{k + q - 2}^v (z)
&=
v(\gamma)^{-1}\, e^{(-k - q + 2) i\arg{\mu(\gamma,z)}} \, f(\gamma z) \, g(\gamma z) \, \frac{\dd (\gamma z)}{\im{\gamma z}} \\
&=
v(\gamma)^{-1} \, e^{(-k-q) i\arg{\mu(\gamma,z)}} \, f(\gamma z) \, g(\gamma z) \, \frac{\dd z}{\im{z}} \\
&=
\big\{f\big|_k^v \gamma, g \big|_q^1 \gamma \big\}^+(z)
\end{align*}
for all $z \in \IH$.
The other identities follow analogously.
\end{proof}

Combining the property \eqref{D2.10} of the $1$-forms
in \eqref{D2.8} with Maass operators, we see that
\begin{equation}
\label{D2.12}
\big\{ \EE^\pm_k \big(f\big|_k^v\gamma \big) ,\,g\big|_q^1\gamma
\big\}^\pm = \big\{ \EE^\pm_k \big(f\big|_k^1\gamma \big)
,\,g\big|_q^v \gamma \big\}^\pm = \big\{ \EE^\pm_k f, \,g \big\}^\pm
\, \big|_{k + q}^v \gamma
\end{equation}
for every $\gamma \in \SL{\IR}$ and $k,q \in \frac{1}{} \IZ$. We also have the relations
\begin{equation}
\label{D2.11}
\begin{split}
\big\{ \EE^+_k f,\,g \big\}^+ &= - \big\{  f,\,\EE^+_{-k} g \big\}^+ +4i
\partial_z(fg)\dd z
\quad\text{and}\\
\big\{ \EE^-_k f,\,g \big\}^- &= - \big\{  f,\,\EE^-_{-k} g \big\}^- - 4i
\partial_{\bar{z}}(fg)\dd \bar{z}.
\end{split}
\end{equation}
and
\begin{align*}
\big\{ \EE^-_k f,\,g \big\}^+ &= - \big\{  f,\,\EE^-_{-k} g \big\}^+ -4i
\partial_{\bar{z}}(fg)\dd z
\quad \text{and} \\
\big\{ \EE^+_k f,\,g \big\}^- &=
 - \big\{  f,\,\EE^+_{-k} g \big\}^- + 4i \partial_z(fg)\dd \bar{z}.
\end{align*}

We are now able to define the Maass-Selberg form.
\begin{definition}
\label{D2.14} Let $f,g$ be real-analytic and $k \in \frac{1}{2}\IZ$. We define
the \emph{Maass-Selberg form} $\eta_k$ by
\begin{equation}
\label{D2.15} \eta_k(f,g) = \big\{\EE^+_{k} f,g\big\}^+ -
\big\{f,\EE^-_{-k} g\big\}^-.
\end{equation}
\end{definition}

\begin{lemma}
\label{D2.16} Let $f,g$ be real-analytic and $k \in \frac{1}{2}\IZ$. The
Maass-Selberg form has the following properties:
\begin{enumerate}
\item
We have the equations
\begin{equation}
\label{D2.17} \eta_k(f,g) + \eta_{-k}(g,f) = 4i \cdot \mathrm{d}
(fg)
\end{equation}
and
\[
\eta_k(f,g) - \eta_{-k}(g,f) =  4\big( [gf_y-fg_y]\mathrm{d}x +
[fg_x-gf_x+\frac{ik}{y} fg]\mathrm{d}y \big)
\]
where $f_x$ denotes $\partial_x f$, $f_y=\partial_y f$,
$g_x=\partial_x g$ and $g_y=\partial_y g$, respectively.
\item
If there exists a $\lambda \in \IR$ such that $f$ and $g$ satisfy
$\Delta_k f = \lambda f$ and $\Delta_{-k} g = \lambda g$, then the
Maass-Selberg form is closed.
\item
We have that
\begin{equation}
\label{D2.18}
\eta_k(f,g)\big|_0^v \gamma = \eta_k \big( f \big|_k^v \gamma,   g \big|_{-k}^1 \gamma \big) = \eta_k \big( f \big|_k^1 \gamma,   g \big|_{-k}^v \gamma \big)
\end{equation}
for any multiplier system $v$.
\item
Let $\nu \in \IC$ and assume that $f$ and $g$ are eigenfunctions of
the operators $\Delta_k$ and $\Delta_{-k}$, respectively, both with
eigenvalue $\frac{1}{4}-\nu^2$. Then, we have that
\begin{equation}
\label{D2.19}
\begin{split}
\eta_{k+2}(\EE^+_kf ,\, \EE^-_{-k}g)
&=
(1+2\nu+k)(1-2\nu+k) \, \eta_k(f,\, g)\\
&\quad
+ 4i \, \dd\left( (\EE^+_kf) (\EE^-_{-k}g) \right).
\end{split}
\end{equation}
\end{enumerate}
\end{lemma}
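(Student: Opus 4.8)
The plan is to take the four parts in sequence, reducing (2) and (4) to part (1) and to the bilinear identities \eqref{D2.11}, the commutation rule \eqref{D2.12}, and the Maass-operator relation \eqref{D1.9}, with the symmetry $\{f,g\}^\pm=\{g,f\}^\pm$ from Lemma~\ref{D2.9} used throughout. For part (1): expand $\eta_{-k}(g,f)=\{\EE^+_{-k}g,f\}^+-\{g,\EE^-_kf\}^-$ and add it to $\eta_k(f,g)=\{\EE^+_kf,g\}^+-\{f,\EE^-_{-k}g\}^-$; using \eqref{D2.11} to rewrite $\{\EE^+_kf,g\}^+$ and $\{\EE^-_kf,g\}^-$ together with the symmetry, the two ``$+$'' contributions collapse to $4i\,\partial_z(fg)\,\dd z$ and the two ``$-$'' contributions to $4i\,\partial_{\bar z}(fg)\,\dd\bar z$, whose sum is $4i\,\dd(fg)$. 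The second identity I would obtain by a direct computation: substitute $\EE^+_k=2iy\partial_x+2y\partial_y+k$ and $\EE^-_{-k}=-2iy\partial_x+2y\partial_y+k$ (from \eqref{D1.4}) together with $\dd z=\dd x+i\,\dd y$, $\dd\bar z=\dd x-i\,\dd y$ into $\eta_k(f,g)=(\EE^+_kf)g\,\frac{\dd z}{y}-f(\EE^-_{-k}g)\,\frac{\dd\bar z}{y}$, do the same for $\eta_{-k}(g,f)$ with $f,g$ and the sign of $k$ interchanged, and subtract; the terms reorganise exactly into $4\big([gf_y-fg_y]\,\dd x+[fg_x-gf_x+\frac{ik}{y}fg]\,\dd y\big)$.

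For part (2): apply $\dd$ to both identities of part (1). The sum is exact, so $\dd\big(\eta_k(f,g)+\eta_{-k}(g,f)\big)=0$. For the difference, write its right-hand side as $\alpha\,\dd x+\beta\,\dd y$ with $\alpha=gf_y-fg_y$ and $\beta=fg_x-gf_x+\frac{ik}{y}fg$; then $\partial_x\beta-\partial_y\alpha=f(g_{xx}+g_{yy})-g(f_{xx}+f_{yy})+\frac{ik}{y}(f_xg+fg_x)$, and substituting $f_{xx}+f_{yy}=\frac{ik}{y}f_x-\frac{\lambda}{y^2}f$ and $g_{xx}+g_{yy}=-\frac{ik}{y}g_x-\frac{\lambda}{y^2}g$ (read off from $\Delta_kf=\lambda f$, $\Delta_{-k}g=\lambda g$ and \eqref{D1.2}) makes this expression vanish; hence $\dd\big(\eta_k(f,g)-\eta_{-k}(g,f)\big)=0$ as well, and adding the two gives $2\,\dd\eta_k(f,g)=0$. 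Part (3) is pure bookkeeping with \eqref{D2.12}: apply it with $q=-k$ to the summand $\{\EE^+_kf,g\}^+$ of $\eta_k(f,g)$, obtaining $\{\EE^+_kf,g\}^+\big|_0^v\gamma=\{\EE^+_k(f\big|_k^v\gamma),g\big|_{-k}^1\gamma\}^+=\{\EE^+_k(f\big|_k^1\gamma),g\big|_{-k}^v\gamma\}^+$; for the summand $\{f,\EE^-_{-k}g\}^-$ first use symmetry to write it as $\{\EE^-_{-k}g,f\}^-$, apply \eqref{D2.12} again (now with the lowering operator on the slot of weight $-k$), and undo the symmetry; reassembling the two summands yields both stated expressions for $\eta_k(f,g)\big|_0^v\gamma$.

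For part (4): set $F=\EE^+_kf$ (of weight $k+2$) and $G=\EE^-_{-k}g$ (of weight $-k-2$), so that $\eta_{k+2}(F,G)=\{\EE^+_{k+2}F,G\}^+-\{F,\EE^-_{-k-2}G\}^-$. On the first summand \eqref{D2.11} gives $\{\EE^+_{k+2}F,G\}^+=-\{F,\EE^+_{-k-2}G\}^++4i\,\partial_z(FG)\,\dd z$, and $\EE^+_{-k-2}G=\EE^+_{-k-2}\EE^-_{-k}g=-(1+2\nu+k)(1-2\nu+k)\,g$ by \eqref{D1.9} (equivalently \eqref{D1.3}) together with $\Delta_{-k}g=(\frac14-\nu^2)g$. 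On the second summand, symmetry and \eqref{D2.11} give $\{F,\EE^-_{-k-2}G\}^-=\{\EE^-_{-k-2}G,F\}^-=-\{G,\EE^-_{k+2}F\}^--4i\,\partial_{\bar z}(FG)\,\dd\bar z$, with $\EE^-_{k+2}F=\EE^-_{k+2}\EE^+_kf=-(1+2\nu+k)(1-2\nu+k)\,f$. Collecting terms, the scalar $(1+2\nu+k)(1-2\nu+k)$ factors out in front of $\{\EE^+_kf,g\}^+-\{f,\EE^-_{-k}g\}^-=\eta_k(f,g)$, and the remaining boundary pieces combine into $4i\big(\partial_z(FG)\,\dd z+\partial_{\bar z}(FG)\,\dd\bar z\big)=4i\,\dd(FG)$, which is the claimed identity.

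The only genuine difficulty is organisational: one must keep track of which weight sits in each slot of $\{\cdot,\cdot\}^\pm$ so that \eqref{D2.11}, \eqref{D2.12}, and \eqref{D1.9} are invoked with the correct indices. A point worth highlighting in part (4) is that no formula for a double raising $\EE^+_{k+2}\EE^+_k$ or a double lowering $\EE^-_{-k-2}\EE^-_{-k}$ is ever needed: each application of \eqref{D2.11} turns such a composition into a raising-then-lowering (resp. lowering-then-raising) one, to which \eqref{D1.9} applies directly. Likewise, in part (2) the cancellation is driven precisely by the first-order term $\frac{ik}{y}fg$ that survives from $\Delta_k$ in part (1), so that term must be carried carefully through the computation.
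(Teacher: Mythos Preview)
Your proposal is correct and follows essentially the same route as the paper: both reduce part~(2) to computing $\dd$ of the difference from part~(1) (the paper writes the result compactly as $[f\Delta_{-k}g-g\Delta_kf]\frac{\dd x\wedge\dd y}{y^2}$, which is exactly your $\partial_x\beta-\partial_y\alpha$ computation divided by $y^2$), obtain part~(3) directly from \eqref{D2.12}, and handle part~(4) by applying \eqref{D2.11} to each summand of $\eta_{k+2}$ and then invoking \eqref{D1.3}/\eqref{D1.9}. The only cosmetic difference is that for the sum in part~(1) the paper expands $\EE^\pm_k$ via \eqref{D1.5} and computes directly, whereas you shortcut through \eqref{D2.11}; the two are equivalent.
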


\begin{proof}
Recall that
$\partial_z =\frac{1}{2}\partial_x - \frac{i}{2} \partial_y$,
${ \partial}_{\bar{z}} =\frac{1}{2}\partial_x + \frac{i}{2} \partial_y$,
$\mathrm{d}z=\mathrm{d}x+i\mathrm{d}y$,
$\mathrm{d}\bar{z} = \mathrm{d}x - i\mathrm{d}y$
and $\dd f = \partial_{z} f \dd z + \partial_{\bar{z}} f \dd \bar{z}$
for any function $f$ smooth in $x$ and $y$.
\begin{enumerate}
\item
We prove the first item via direct computation:
\begin{align*}
\eta_k(f,g)+\eta_{-k}(g,f)
&=
-4i\big[(fg_z+gf_z\big)\dd z +
\big(fg_{\bar{z}}+gf_{\bar{z}}\big) \dd \bar{z} \big]\\
&=
4i\dd (fg)
\end{align*}
and
\begin{align*}
\eta_k(f,g)-\eta_{-k}(g,f)
&=
4i\bigg[\big(gf_z-fg_z-\frac{ik}{2y}fg \big)\dd z \\
& \qquad
 -\big( gf_{\bar{z}}-fg_{\bar{z}} - \frac{ik}{2y}fg\big)
\dd \bar{z} \bigg]\\
&=
4\bigg[\big(gf_y-fg_y\big)\dd x +
 \big(fg_x-gf_x+i\frac{k}{y}fg\big)\dd y  \bigg],
\end{align*}
where we used \eqref{D1.5} for the Maass operators.
\item
We want to show that $\dd \,\eta_k(f,g) =0$ under the given
conditions. Since
\[
2 \eta_k(f,g) =
\big(  \eta_k(f,g) + \eta_{-k}(g,f) \big) +
\big(  \eta_k(f,g) - \eta_{-k}(g,f) \big),
\]
and since we already proved the first part of the lemma,
it is enough to show that $\eta_k(f,g)-\eta_{-k}(g,f)$ is closed.
We find, after some computation, that
\[
\dd \big(\eta_k(f,g)-\eta_{-k}(g,f)\big)
=
\big[f \Delta_{-k} g - g \Delta_k f \big]
\frac{\dd x \wedge \dd y}{y^2}.
\]
\item
This follows directly from \eqref{D2.12}.
\item
It follows directly from the equations in \eqref{D2.11} that
\begin{align*}
\eta_{k+2}(\EE^+_kf ,\, \EE^-_{-k}g)
&=
4i \dd\left[ (\EE^+_kf) (\EE^-_{-k}g) \right]\\
&\quad
- \{\EE^+_kf, \, \EE^+_{-k-2}\EE^-_{-k} g\}^+
   + \{\EE^-_{k+2}\EE^+_k f, \, \EE^-_{-k} g\}^+.
\end{align*}
We may apply \eqref{D1.3}, since we assume that $f$ and $g$ are
eigenfunctions of the Laplace operators $\Delta_k$ and
$\Delta_{-k}$,respectively, with the same eigenvalue. The statement
in the lemma follows.
\end{enumerate}
\end{proof}

\begin{remark}
\label{D2.20} The first three items of Lemma~\ref{D2.16} are
generalizations of the lemma given in \cite[II.2]{LZ01}. We have
that
\[
\eta_0(f,g) = [f,g]
\]
where $[\cdot,\cdot]$ is defined in \cite[Chapter II,
\S2]{LZ01}. Our form $\{ \cdot, \cdot\}^\pm$ in~\eqref{D2.8} differs
from the form $\{ \cdot, \cdot\}$ in \cite[II.\S2, (2.5)]{LZ01},
contrary to the notational resemblance.
\end{remark}

\begin{lemma}
\label{D2.35} Let $f$ and $g$ be smooth functions (in $x$ and $y$)
on $\IH \cup \IH^-$ satisfying $f(z) = f(\bar{z})$ and $g(z) =
g(\bar{z})$.
\begin{enumerate}
\item
We can extend the Maass-Selberg form to smooth (in $x$ and $y$) functions $f,g$
defined on the lower half-plane.
\item
The Maass-Selberg form satisfies
\begin{equation}
\label{D2.36} \eta_k(f,g) \, (z) = \eta_k(g,f) \, (\bar{z}).
\end{equation}
\end{enumerate}
\end{lemma}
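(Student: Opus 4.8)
I would first dispose of part~(1), which calls for no real argument: Definition~\ref{D2.14} expresses $\eta_k(f,g)$ through the Maass operators $\EE^\pm_k$ of \eqref{D1.4} and the $1$-forms $\{\cdot,\cdot\}^\pm$ of \eqref{D2.8}, and every ingredient appearing there---the vector fields $\partial_x,\partial_y$, multiplication by $y$, and division by $y$---is smooth and well defined wherever $y\neq 0$, hence on $\IH^-$ as well. So the same formula defines $\eta_k(f,g)$ as a smooth $1$-form on all of $\IH\cup\IH^-$, restricting on $\IH$ to the form of Definition~\ref{D2.14}; the symmetry hypotheses on $f$ and $g$ play no role here. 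For part~(2), I would write $\sigma$ for the reflection $z\mapsto\bar z$, that is $(x,y)\mapsto(x,-y)$ in real coordinates, which interchanges $\IH$ and $\IH^-$. The identity \eqref{D2.36} is then to be read as $\sigma^*\eta_k(g,f)=\eta_k(f,g)$, where $\sigma^*$ denotes pullback of differential forms; note that $\sigma^*\dd z=\dd\bar z$, $\sigma^*\dd\bar z=\dd z$, and $\sigma$ pulls the function $y$ back to $-y$, so that $\sigma^*(\dd z/y)=-\dd\bar z/y$ and $\sigma^*(\dd\bar z/y)=-\dd z/y$.

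The key step will be to see how $\sigma$ interacts with the Maass operators. Substituting $y\mapsto-y$ (hence $\partial_y\mapsto-\partial_y$) in the formula \eqref{D1.4} for $\EE^\pm_k$---exactly the substitution already carried out in the proof of Lemma~\ref{D1.8}---shows that, for every smooth $h$,
\[
(\EE^+_k h)\circ\sigma=\EE^-_{-k}(h\circ\sigma)
\qquad\text{and}\qquad
(\EE^-_{-k} h)\circ\sigma=\EE^+_k(h\circ\sigma);
\]
that is, conjugation by $\sigma$ interchanges $\EE^+_k$ and $\EE^-_{-k}$. Since $f\circ\sigma=f$ and $g\circ\sigma=g$ by hypothesis, this specializes to $(\EE^+_k g)\circ\sigma=\EE^-_{-k}g$ and $(\EE^-_{-k} f)\circ\sigma=\EE^+_k f$.

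It will then suffice to apply $\sigma^*$ term by term. By Definition~\ref{D2.14} and \eqref{D2.8},
\[
\eta_k(g,f)=(\EE^+_k g)\,f\,\frac{\dd z}{y}-g\,(\EE^-_{-k}f)\,\frac{\dd\bar z}{y};
\]
pulling back by $\sigma$, inserting the relations above and using that pointwise multiplication is commutative, I expect to obtain
\begin{align*}
\sigma^*\eta_k(g,f)
&=-(\EE^-_{-k}g)\,f\,\frac{\dd\bar z}{y}+g\,(\EE^+_k f)\,\frac{\dd z}{y}\\
&=\{\EE^+_k f,g\}^+-\{f,\EE^-_{-k}g\}^-=\eta_k(f,g),
\end{align*}
which is \eqref{D2.36}. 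The argument is short, and I do not foresee a genuine obstacle; the one place to be careful is the bookkeeping of signs, since both the forms $\{\cdot,\cdot\}^\pm$ (through the $1/y$) and the operators $\EE^\pm_k$ (through the $\pm 2iy\partial_x$ and the $\pm k$) change under $y\mapsto-y$, and one must check that these changes conspire precisely because of the way the signs $\pm$ and the weights $\pm k$ are arranged in $\eta_k$.
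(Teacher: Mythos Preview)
Your proposal is correct and follows essentially the same route as the paper's proof: the paper also notes that all ingredients make sense on $\IH^-$ for part~(1), and for part~(2) instructs the reader to expand $\eta_k$ via \eqref{D2.15}, \eqref{D2.8}, \eqref{D1.4}--\eqref{D1.5}, perform the substitution $(z,\bar z)\mapsto(\bar z,z)$ (equivalently $(x,y)\mapsto(x,-y)$), and reassemble. Your write-up is in fact more explicit than the paper's sketch---you isolate the key intertwining relation $(\EE^+_k h)\circ\sigma=\EE^-_{-k}(h\circ\sigma)$ and phrase the identity cleanly as $\sigma^*\eta_k(g,f)=\eta_k(f,g)$---but the underlying computation is the same.
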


\begin{proof}
\begin{enumerate}
\item
All differentials and other components used in the definition of the Maass-Selberg-form
are well defined for smooth (in $x$ and $y$) functions on the lower half-plane $\IH^-$.
Hence the extension makes sense.
\item
This follows by direct calculations. First use the relations
\eqref{D2.15}, \eqref{D2.8} and \eqref{D1.4} followed by
\eqref{D1.5} to rewrite everything depending on the pair
$(z,\bar{z})$ respectively $(x,y)$. Then, use the substitution
$(z,\bar{z})\mapsto (\bar{z},z)$ respectively $(x,y) \mapsto
(x,-y)$. As final step, use the above relations in reverse order.
\end{enumerate}
\end{proof}

\subsection{Everything Combined}
\label{D2.34}
We will now insert the function $R_{k,\nu}$ into the Maass-Selberg form and use the form to define nearly periodic
functions.

\begin{lemma}
\label{D2.28} Let $v$ be a multiplier which is compatible with the
half-integral weight $k$, $\nu \in \IC$, $u$ a Maass cusp form with weight
$k$ multiplier $v$ and eigenvalue $\frac{1}{4}-\nu^2$. Moreover let
$\gamma \in \SL{\IZ}$ and $\zeta,z \in \IC$ satisfying the assumptions of Lemma~\ref{D2.26}.

It follows that
\begin{equation}
\label{D2.29}
\begin{split}
\eta_{k}(u,R_{-k,\nu}\big(\cdot,\gamma \zeta)\big) \big|_0^v \gamma (z)
&=
\big(\mu(\gamma,\zeta)\big)^{1-2\nu} \eta_{k}\big(u,R_{-k,\nu}(\cdot,\zeta)\big) (z)
\quad \text{and}\\
\eta_{-k}(R_{-k,\nu}\big(\cdot,\gamma \zeta),u\big) \big|_0^v \gamma (z)
&= \big(\mu(\gamma,\zeta) \big)^{1-2\nu} \eta_{-k}\big(R_{-k,\nu}(\cdot,\zeta),u\big) (z).
\end{split}
\end{equation}
\end{lemma}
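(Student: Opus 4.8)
The plan is to derive \eqref{D2.29} by combining the $\gamma$-equivariance of the Maass–Selberg form (item (3) of Lemma~\ref{D2.16}, i.e.\ \eqref{D2.18}) with the transformation law of the $R$-function (Lemma~\ref{D2.26}, i.e.\ \eqref{D2.27}). First I would record that $u$ is a Maass cusp form of weight $k$ and multiplier $v$, so $u|_k^v\gamma = u$ by Definition~\ref{D1.1}\eqref{D1.1.1}, and that $z\mapsto R_{-k,\nu}(z,\zeta)$ is an eigenfunction of $\Delta_{-k}$ (indeed of both $\Delta_{-k}$ on $\IH$ and on $\IH^-$) with eigenvalue $\frac{1}{4}-\nu^2$ by Proposition~\ref{D2.21}(4), matching the eigenvalue of $u$; this legitimizes forming $\eta_k(u, R_{-k,\nu}(\cdot,\zeta))$ and guarantees it is closed.

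The core computation is then a chain of substitutions. Starting from $\eta_k\big(u, R_{-k,\nu}(\cdot,\gamma\zeta)\big)\big|_0^v\gamma$, apply \eqref{D2.18} with the weight pair $(k,-k)$ to move the slash inside: this equals $\eta_k\big(u\big|_k^v\gamma,\; R_{-k,\nu}(\cdot,\gamma\zeta)\big|_{-k}^1\gamma\big)$. The first slot collapses to $u$ by the modularity of $u$. For the second slot, $\big(R_{-k,\nu}(\cdot,\gamma\zeta)\big|_{-k}^1\gamma\big)(z) = e^{ik\arg{\mu(\gamma,z)}} R_{-k,\nu}(\gamma z,\gamma\zeta)$ by the slash definition \eqref{B.15} (with weight $-k$, the prefactor is $e^{-i(-k)\arg{\mu(\gamma,z)}} = e^{ik\arg{\mu(\gamma,z)}}$); then Lemma~\ref{D2.26} applied with weight $-k$ gives $R_{-k,\nu}(\gamma z,\gamma\zeta) = e^{-ik\arg{\mu(\gamma,z)}}\big(\mu(\gamma,\zeta)\big)^{1-2\nu} R_{-k,\nu}(z,\zeta)$, so the two $e^{\pm ik\arg{\mu(\gamma,z)}}$ factors cancel and the second slot becomes $\big(\mu(\gamma,\zeta)\big)^{1-2\nu} R_{-k,\nu}(z,\zeta)$. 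Pulling the scalar $\big(\mu(\gamma,\zeta)\big)^{1-2\nu}$ out of the (bilinear) Maass–Selberg form yields the first identity of \eqref{D2.29}. The second identity follows the same way, using the symmetry $\{f,g\}^\pm = \{g,f\}^\pm$ from Lemma~\ref{D2.9}(2) — equivalently $\eta_k(f,g)$ vs.\ $\eta_{-k}(g,f)$ via Lemma~\ref{D2.35}\eqref{D2.36} — together with the $\IH^-$ part of Proposition~\ref{D2.21}(4); alternatively one simply repeats the argument verbatim with the roles of the two slots interchanged and $(k,-k)$ replaced by $(-k,k)$ in \eqref{D2.18}.

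The main obstacle is bookkeeping of the branch cuts and hypotheses rather than any deep difficulty: to invoke Lemma~\ref{D2.26} one must check that $\gamma,\zeta,z$ still satisfy all of its assumptions after the relabeling $\zeta\leadsto\gamma\zeta$ is undone — namely \eqref{D2.2}, $\mu(\gamma,\zeta),\mu(\gamma,z)\in\ICprime$, $\re{\mu(\gamma,\zeta)}>0$, and one of the three geodesic-ray conditions — which is exactly what the phrase ``satisfying the assumptions of Lemma~\ref{D2.26}'' in the statement guarantees. One should also confirm that $\big(\mu(\gamma,\zeta)\big)^{1-2\nu}$ is a genuine scalar (independent of $z$) so it may be factored through $\eta_k$, and that applying Lemma~\ref{D2.26} with weight $-k$ is legitimate since $-k\in\frac12\IZ$ and the lemma is stated for arbitrary half-integral weight. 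No delicate estimate is needed here; the identity is purely algebraic once the transformation laws are in hand.
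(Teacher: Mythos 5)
Your route is the same as the paper's: move the slash inside via the equivariance \eqref{D2.18}, collapse the first slot with $u\big|_k^v\gamma=u$, and then apply the transformation law \eqref{D2.27} of the $R$-function; that you write out the first identity of \eqref{D2.29} while the paper writes out the second (and then says ``same argument'') is immaterial.

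One step needs the extra sentence that the paper adds at the end of its proof. You conclude by saying the second slot ``becomes'' $\big(\mu(\gamma,\zeta)\big)^{1-2\nu}R_{-k,\nu}(\cdot,\zeta)$ and then pull the scalar out of the bilinear form. But $\eta_k(u,\cdot)$ is not a pointwise pairing: by \eqref{D2.15} it involves $\EE^-_{-k}$ applied to the second argument (and $\eta_{-k}(\cdot,u)$ involves $\EE^+_{-k}$ of the first). Under conditions (2) and (3) of Lemma~\ref{D2.26} the identity \eqref{D2.27} is guaranteed only for $z$ on a geodesic ray, and equality of the slot functions along a one-real-dimensional curve does not determine their Maass derivatives there, so bilinearity alone does not yield equality of the $1$-forms. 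The repair is exactly the paper's closing remark: by \eqref{D1.7} the slash commutes with the Maass operators, and by \eqref{D2.6} $\EE^{\pm}_{-k}R_{-k,\nu}$ is a constant multiple of $R_{-k\pm2,\nu}$, which obeys the same transformation law \eqref{D2.27} (with shifted weight) under the same hypotheses on $(\gamma,\zeta,z)$; hence both terms of the Maass--Selberg form acquire the factor $\big(\mu(\gamma,\zeta)\big)^{1-2\nu}$ pointwise, which is what \eqref{D2.29} asserts. (In case (1), where $\mu(\gamma,\zeta)\in\IR_{>0}$, the identity holds for all admissible $z$ and your argument is fine as written.) Finally, for the other identity your parenthetical detour through $\{f,g\}^\pm=\{g,f\}^\pm$ and \eqref{D2.36} is shaky as phrased (it conflates $\eta_k(f,g)$ with $\eta_{-k}(g,f)$, which differ by an exact form), but your fallback of repeating the computation with the slots and the weight pair interchanged is precisely what the paper does.
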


\begin{proof}
We show the second identity:
\begin{align*}
& \!\!\!\!
\eta_{-k}\big(R_{-k,\nu}(\cdot,\gamma \zeta),u\big) \big|_0^v \gamma (z) \\
&=
\eta_{-k}\big(R_{-k,\nu}(\cdot,\gamma \zeta) \big|_{-k}^1 \gamma, u\big|_k^v\gamma  \big) (z)
\qquad \text{using \eqref{D2.18}} \\
&=
\eta_{-k}\big(R_{-k,\nu}(\cdot,\gamma \zeta) \big|_{-k}^1 \gamma, u \big) (z) \\
&=
\big(\mu(\gamma,\zeta) \big)^{1-2\nu} \, \eta_k\big( R_{-k,\nu}(\cdot,\zeta) ,u \big) (z) \qquad \text{using \eqref{D2.27}}.
\end{align*}
The use of the transformation formula \eqref{D2.27} in the calculation above is allowed since $z$ and $\zeta$ satisfy the assumptions of Lemma~\ref{D2.26} and since $\EE^+_{- k} R_{- k,\nu}\big(z, \zeta)$ appearing in the construction of $\eta_{\pm k}\big(R_{-k,\nu}(\cdot,\zeta),u\big)(z)$ satisfies \eqref{D2.6}.

The first identity follows by the same arguments.
\end{proof}

\section{Nearly Periodic Functions}
\label{D3}
Let $u$ be a Maass cusp form of weight $k$, multiplier
$v$, and spectral value $\nu$ as defined in and below
Definition~\ref{D1.1}.

We define on $\IC\smallsetminus \IR \to \IC; $

\begin{equation}
\label{D3.1}
\begin{split}
\zeta &\mapsto f(\zeta) :=
\begin{cases}
\displaystyle
  \int_\zeta^{i\infty} \eta_{-k}\big(R_{-k,\nu}(\cdot,\zeta),u\big) (z)
    &\text{ if $\im{\zeta} > 0$ and} \\
\displaystyle
- \int_{\zeta}^{-i\infty} \eta_{k}\big(R_{-k,\nu}(\cdot,\zeta), \tilde{u} \big) (z)
    &\text{ if $\im{\zeta} < 0$},
\end{cases}
\end{split}
\end{equation}
where $\tilde{u}(z) = u(\bar{z})$ as defined in Lemma~\ref{D1.8}.
The path of integration is the geodesic ray connecting $\zeta$ and the
cusp $i\infty$ respectively $-i\infty$ in the upper respectively lower half-plane.

\begin{remark}
\label{D3.23}
\begin{enumerate}
\item
We have made a choice in Definition~\ref{D3.1} between integrating over the forms
\[
\text(1a) \quad
-\eta_{k}\big(u,R_{-k,\nu}(\cdot,\zeta)\big)
\quad \text{or} \quad \text(1b) \quad
\eta_{-k}\big(R_{-k,\nu}(\cdot,\zeta),u\big)
\]
on $\IH$ and
\[
\text(2a) \quad
-\eta_{k}\big(R_{-k,\nu}(\cdot,\zeta), \tilde{u}\big)
\quad \text{or} \quad \text(2b) \quad
\eta_{-k}\big(\tilde{u},R_{-k,\nu}(\cdot,\zeta)\big)
\]
on $\IH^-$.

We will see later in Remark~\ref{D4.14} that each choice leads to
the same period function on $\IH$ prospectively $\IH^-$.
Remark~\ref{D3.24} compares our choice to the situation discussed in
\cite{LZ01}.
\item
The reason, why we extended Maass cusp forms and the $R$-function to the lower half-plane $\IH^-$ and also extended the Maass-Selberg-form $\eta_k$ to functions on the lower half-plane, see e.g.\ Lemma~\ref{D1.8}, Proposition~\ref{D2.21} and Lemma~\ref{D2.35} respectively, is the second case of \eqref{D3.1}.
We want to be able to integrate along the geodesic ray $\zeta -i\IR_{>0}$ connecting $\zeta \in \IH^-$ and $-i\infty$ in the lower half-plane.
In our opinion, this representation illustrates better how the function $f$ is defined on $\IH^-$ compared to the (on $\IH^-$ equivalent) integral representation in Lemma~\ref{D3.25}.
\end{enumerate}
\end{remark}

\begin{lemma}
\label{D2.37} For $\abs{\re{\nu}}< \frac{1}{2}$, the integration in
\eqref{D3.1} is well-defined along the geodesic paths connecting
$\zeta$ to $i\infty$ in $\IH$ respectively $\zeta$ to $-i\infty$ in
$\IH^-$.
\end{lemma}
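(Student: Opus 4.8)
The plan is to show convergence of each of the two integrals in \eqref{D3.1} by analyzing the integrand along the relevant geodesic ray, reducing everything to the decay of the Maass cusp form and the growth of the $R$-function near the cusp. I would treat the upper half-plane case in detail; the lower half-plane case follows by applying Lemma~\ref{D1.8} and Lemma~\ref{D2.35} (which relate $\tilde u$ and $\eta_k$ on $\IH^-$ to $u$ and $\eta_{-k}$ on $\IH$) to reduce it to the first case.

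First I would fix $\zeta \in \IH$ and parametrize the geodesic ray from $\zeta$ to $i\infty$; since $\eta_{-k}\big(R_{-k,\nu}(\cdot,\zeta),u\big)$ is a closed $1$-form by Lemma~\ref{D2.16}(2) (using that $u$ has eigenvalue $\tfrac14-\nu^2$ under $\Delta_k$ and $R_{-k,\nu}(\cdot,\zeta)$ has the same eigenvalue under $\Delta_{-k}$ by Proposition~\ref{D2.21}(4)), the integral is path-independent in the relevant region and it suffices to estimate it along the vertical ray $z = x_0 + iy$, $y \to \infty$, for a convenient representative. Writing out $\eta_{-k}(R_{-k,\nu}(\cdot,\zeta),u)$ via \eqref{D2.15} and \eqref{D2.8}, the integrand is a combination of $R_{-k,\nu}(z,\zeta)$, its image under $\EE^-$ (which by \eqref{D2.6} is $R_{-k\pm2,\nu}(\cdot,\zeta)$ up to a constant), $u$, and $\EE^+_k u$, all multiplied by $\dd z/y$ or $\dd\bar z/y$. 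From the explicit formula \eqref{D2.5}, as $y\to\infty$ with $\zeta$ fixed one has $R_{k,\nu}(z,\zeta) = \OO{y^{-(1/2-\re{\nu})}\cdot y^{-(1/2-\re{\nu})}} $ — more precisely $|\im z|^{1/2-\re\nu}\big/|(\zeta-z)(\zeta-\bar z)|^{1/2-\re\nu} \sim y^{1/2-\re\nu}/y^{2(1/2-\re\nu)} = y^{-(1/2-\re\nu)}$, while the prefactor $(\sqrt{\zeta-z}/\sqrt{\zeta-\bar z})^{-k}$ stays bounded. The Maass cusp form $u$ decays faster than any power of $y$ by Definition~\ref{D1.1}\eqref{D1.1.3}, and the same is true of $\EE^+_k u$ (which is again of rapid decay, being built from $u$ and its derivatives, or one can invoke the standard Fourier expansion of a cusp form whose non-zero Fourier coefficients force exponential decay). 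Hence the full integrand is dominated by $y^{-(1/2-\re\nu)}\cdot(\text{rapidly decaying in }y)\cdot y^{-1}$, and the integral $\int^{\infty} \dd y$ converges absolutely; the hypothesis $|\re\nu|<\tfrac12$ guarantees that the power $y^{-(1/2-\re\nu)}$ (equivalently, that the $R$-function does not grow too fast) does not spoil absolute convergence, though in fact the rapid decay of $u$ already dominates. Near the finite endpoint $\zeta$ there is no issue, since all functions involved are real-analytic there by Proposition~\ref{D2.21}(1).

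The main obstacle, and the point deserving care, is justifying the rapid decay of $\EE^+_k u$ along the ray: one must observe that applying a Maass raising operator to a cusp form preserves the cuspidality (the constant term in the Fourier expansion stays zero, since $\EE^+_k$ acts on each Fourier mode $e^{2\pi i n x}W(y)$ by changing $W$ but not $n$, and annihilates nothing that matters here), so $\EE^+_k u$ still decays exponentially as $y\to\infty$; alternatively one cites the growth condition in Definition~\ref{D1.1}\eqref{D1.1.3} together with elliptic regularity for the eigenvalue equation. Once that is in hand, combining it with the polynomial growth bound on $R_{-k\pm2,\nu}(z,\zeta)$ extracted from \eqref{D2.5} closes the estimate. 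For the second case of \eqref{D3.1}, I would note that $z\mapsto R_{-k,\nu}(z,\zeta)$ restricted to $\IH^-$ satisfies the conjugate differential equations in Proposition~\ref{D2.21}(4), that $\tilde u$ is of rapid decay as $y\to-\infty$ by Lemma~\ref{D1.8}\eqref{D1.8.3}, and that $\eta_k(R_{-k,\nu}(\cdot,\zeta),\tilde u)$ is again closed by Lemma~\ref{D2.16}(2); the very same growth bookkeeping then applies with $y$ replaced by $|y|$, giving convergence of the integral from $\zeta$ to $-i\infty$.
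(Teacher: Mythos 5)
Your write-up spends nearly all of its effort at the cusp, but that is not where the hypothesis $\abs{\re{\nu}}<\frac{1}{2}$ is used, and the sentence you dispose of the finite endpoint with is exactly where the gap lies. The path in \eqref{D3.1} \emph{starts at $\zeta$ itself}, and the integrand is not real-analytic there: Proposition~\ref{D2.21}(1) asserts smoothness of $z\mapsto R_{-k,\nu}(z,\zeta)$ only under condition \eqref{D2.2}, which fails at $z=\zeta$ (where $\zeta-z=0\in\IR_{\leq 0}$). From \eqref{D2.5} the integrand blows up as $z\to\zeta$ like $(\zeta-z)^{\nu-\frac{1}{2}}$ at worst (the square-root prefactor is harmless, and the Maass operators occurring in $\eta_{-k}$ only shift the weight of the $R$-function by \eqref{D2.6}, so they do not worsen this singularity); along the geodesic ray this is $\abs{\zeta-z}^{\re{\nu}-\frac{1}{2}}$, which is integrable at the endpoint precisely because $\re{\nu}>-\frac{1}{2}$. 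This endpoint estimate is essentially the whole content of the paper's proof, and it is the only place the hypothesis on $\nu$ enters; the same analysis applies at $z\to\zeta$ in $\IH^-$ (equivalently, via \eqref{D3.26}, to the factor $(\zeta-\bar z)^{\nu-\frac{1}{2}}$ as $z\to\bar\zeta$). Your own observation that at infinity ``the rapid decay of $u$ already dominates,'' so that the condition on $\nu$ would be superfluous there, should have been the warning sign that the condition must be doing work elsewhere.

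The part of your argument concerning $i\infty$ (respectively $-i\infty$) is fine and agrees in spirit with the paper: the $R$-factors behave polynomially in $y$, while $u$, $\tilde u$ and their images under the Maass raising/lowering operators decay rapidly, and the reduction of the lower half-plane case via Lemma~\ref{D1.8} and Lemma~\ref{D2.35} is consistent with Lemma~\ref{D3.25}. (Also, the detour through closedness of the form to move to a vertical ray is unnecessary: for $\zeta\in\IH$ the geodesic ray to $i\infty$ already \emph{is} the vertical ray $\zeta+i\IR_{\geq 0}$.) But as written the proposal does not prove the lemma; you must replace the claim ``near the finite endpoint $\zeta$ there is no issue'' by the singularity estimate above.
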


\begin{proof}
The singularity of $R_{k,\nu}(z,\zeta)$ for $z \to \zeta \in \IH$
respectively $z \to \bar{\zeta} \in \IH$ is of the form
$(\zeta-z)^{\nu-\frac{1}{2}}$ respectively
$(\zeta-\bar{z})^{\nu-\frac{1}{2}}$. The whole integrand has at most
the same singularity since $R_{k,\nu}$ is an eigenfunction of the
Maass operators, see \eqref{D2.6}. The weight argument
$\arg{\zeta-z}$ has also a well defined limit. Hence the integration
is well defined for $\nu$ values satisfying $\abs{\re{\nu}} <
\frac{1}{2}$.
\end{proof}

\begin{lemma}
\label{D3.25} For $\zeta \in \IH^-$ we have
\begin{equation}
\label{D3.26}
f(\zeta)
=
-\int_{\bar{\zeta}}^{i\infty} \eta_{k}\big(u,R_{-k,\nu}(\cdot,\zeta)\big) (z).
\end{equation}
\end{lemma}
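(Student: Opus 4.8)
The plan is to relate the two integral representations of $f(\zeta)$ for $\zeta \in \IH^-$ by using the symmetry property of the Maass-Selberg form established in Lemma~\ref{D2.35}\eqref{D2.36}, namely $\eta_k(f,g)(z) = \eta_k(g,f)(\bar z)$, together with the fact that $R_{k,\nu}(z,\zeta)$ is built so that conjugating $z$ relates the upper and lower half-plane pictures. Concretely, starting from the second case of Definition~\ref{D3.1},
\[
f(\zeta) = -\int_\zeta^{-i\infty} \eta_k\big(R_{-k,\nu}(\cdot,\zeta),\tilde u\big)(z),
\]
I would apply the change of variables $z \mapsto \bar z$ to the line integral. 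Under this substitution the geodesic ray from $\zeta \in \IH^-$ to $-i\infty$ is carried to the geodesic ray from $\bar\zeta \in \IH$ to $i\infty$.

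The first step is therefore to record carefully what the integrand becomes under $z\mapsto\bar z$. By Lemma~\ref{D2.35}\eqref{D2.36} applied with $f = R_{-k,\nu}(\cdot,\zeta)$ and $g = \tilde u$ — noting that both $\tilde u$ (by its very definition $\tilde u(z)=u(\bar z)$, so $\tilde u(\bar z) = u(z)$) and $R_{-k,\nu}(\cdot,\zeta)$ are defined on $\IH\cup\IH^-$ and satisfy the hypotheses there — we get
\[
\eta_k\big(R_{-k,\nu}(\cdot,\zeta),\tilde u\big)(z) = \eta_k\big(\tilde u, R_{-k,\nu}(\cdot,\zeta)\big)(\bar z).
\]
Then, substituting $z\mapsto\bar z$ in the integral and using $\tilde u(\bar z) = u(z)$ (i.e.\ that on $\IH$ the function $\tilde u$ evaluated at $\bar z$ is simply $u$), the integrand is converted into $\eta_k\big(u, R_{-k,\nu}(\cdot,\zeta)\big)(z)$, now integrated over the geodesic ray from $\bar\zeta$ to $i\infty$. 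One must also track the sign and orientation: reversing the orientation of the path (from "$\zeta$ to $-i\infty$" to "$\bar\zeta$ to $i\infty$") interacts with the overall minus sign and with the Jacobian of $z\mapsto\bar z$ on the $1$-forms $\dd z/y$ and $\dd\bar z/y$, and these should be checked to combine into the single minus sign appearing in \eqref{D3.26}.

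The main obstacle I anticipate is bookkeeping rather than anything deep: one has to be meticulous about (i) which variable the Maass-Selberg form's arguments $R_{-k,\nu}(\cdot,\zeta)$ and $\tilde u$ are functions of versus where they are evaluated, since Lemma~\ref{D2.35} swaps the two slots while conjugating the evaluation point; (ii) the orientation of the geodesic path under complex conjugation; and (iii) the behaviour of $\dd z$ versus $\dd\bar z$ under $z\mapsto\bar z$, which swaps $\{\cdot,\cdot\}^+$ and $\{\cdot,\cdot\}^-$ and hence is already folded into the proof of \eqref{D2.36}. Since $R_{-k,\nu}$ depends on $\zeta$ only as a parameter and $\zeta$ is held fixed throughout (with $\bar\zeta\in\IH$), no issue arises from the branch cuts of the square roots in \eqref{D2.5}. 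Once the signs are reconciled, \eqref{D3.26} follows immediately, and the lemma is proved.
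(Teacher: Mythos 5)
Your proposal is correct and follows essentially the same route as the paper's proof: the paper likewise starts from the second case of \eqref{D3.1}, invokes the symmetry \eqref{D2.36} of the Maass--Selberg form to swap the two slots while conjugating the evaluation point, uses $\tilde{u}(\bar z)=u(z)$ from Lemma~\ref{D1.8}, and then substitutes $z\mapsto\bar z$ to carry the geodesic ray from $\zeta$ to $-i\infty$ onto the ray from $\bar\zeta$ to $i\infty$, the overall minus sign passing through unchanged. The sign and orientation bookkeeping you flag is exactly what the paper's short chain of equalities implicitly absorbs, so no further idea is needed.
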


\begin{proof}
For $\zeta \in \IH^-$ we have
\begin{align*}
f(\zeta) &=
-\int_{\zeta}^{-i\infty} \eta_{k}\big(R_{-k,\nu}(\cdot,\zeta),\tilde{u} \big) (z) \\
&=
-\int_{\zeta}^{-i\infty} \eta_{k}\big(u, R_{-k,\nu}(\cdot,\zeta)\big) (\bar{z}) \qquad \text{using \eqref{D2.36}}\\
&=
-\int_{\bar{\zeta}}^{i\infty} \eta_{k}\big(u, R_{-k,\nu}(\cdot,\zeta)\big) (z),
\end{align*}
where we used $u(z)=\tilde{u}(\bar{z})$ for $z \in \IH$ in Lemma~\ref{D1.8}.
\end{proof}

\begin{remark}
\label{D3.24} For $k=0$, we can compare the definition of $f$ in
\eqref{D3.1} to the one in \cite[page~212]{LZ01} since we have
$\eta_0(f,g) = [f,g]$, see Remark~\ref{D2.20}. We find that
\cite{LZ01} uses exactly the opposite choice: They use
$\int_\zeta^{i\infty} \eta_{0}\big(u,R_{-0,\nu}(\cdot,\zeta)\big)$
for $\zeta \in \IH$ (compared to $\int_\zeta^{i\infty}
\eta_{-0}\big(R_{-0,\nu}(\cdot,\zeta),u\big)$ in \eqref{D3.1}). On
the lower half plane they use $-\int_{\bar{\zeta}}^{i\infty}
\eta_{-0}\big(R_{-0,\nu}(\cdot,\zeta), u \big)$ for $\zeta \in
\IH^-$ (compared to $-\int_{\bar{\zeta}}^{i\infty}
\eta_{0}\big(u, R_{-0,\nu}(\cdot,\zeta)\big)$ in \eqref{D3.26}).
\end{remark}

\smallskip

In the following lemma, we describe the transformation property of
the function $f(\zeta)$.
\begin{lemma}
\label{D3.3} Let $\abs{\re{\nu}} < \frac{1}{2}$.
The function $f$ defined in \eqref{D3.1} satisfies
\begin{equation}
\label{D3.4}
\begin{split}
f \big\|_{\nu}^v \gamma (z)
&= v(\gamma)^{-1} \, \big(\mu(\gamma,\zeta)\big)^{2\nu-1} \,
f(\gamma \, \zeta) \\
&=
\begin{cases}
\displaystyle  \int_{\zeta}^{\gamma^{-1}i\infty}
\eta_{-k}\big(R_{-k,\nu}(\cdot,\zeta),u\big) ( z)
&\text{if $\zeta \in \IH$ and}\\
\displaystyle -\int_{\overline{\zeta}}^{\gamma^{-1}i\infty}
\eta_{k}\big(u, R_{-k,\nu}(\cdot,\zeta)\big) ( z)
&\text{if $\zeta \in \IH^-$}
\end{cases}
\end{split}
\end{equation}
for every $\zeta \in \IH \cup \IH^-$ and $\gamma \in \SL{\IZ}$ satisfying $\re{\mu(\gamma,\zeta)} >0$.
The path of integration on the right hand side is the geodesic ray connecting $\zeta$ respectively $\bar{\zeta}$ and $\gamma^{-1}(i\infty)$.
\end{lemma}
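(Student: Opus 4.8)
The plan is to compute $f\big\|_\nu^v\gamma(\zeta)$ directly from the integral definition \eqref{D3.1} and push the matrix $\gamma$ through the Maass-Selberg form using Lemma~\ref{D2.28}. First I would treat the case $\zeta \in \IH$. By definition of the double-slash notation in \eqref{B.12},
\[
f\big\|_\nu^v\gamma(\zeta) = v(\gamma)^{-1}\,\big(\mu(\gamma,\zeta)\big)^{2\nu-1}\,f(\gamma\zeta),
\]
so I need to understand $f(\gamma\zeta)$. Now $\gamma\zeta$ is a point in $\IH$ (since $\zeta \in \IH$), and by \eqref{D3.1} we have $f(\gamma\zeta) = \int_{\gamma\zeta}^{i\infty}\eta_{-k}\big(R_{-k,\nu}(\cdot,\gamma\zeta),u\big)(z)$. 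The natural move is the substitution $z \mapsto \gamma z$ in this integral: the geodesic ray from $\gamma\zeta$ to $i\infty$ pulls back under $\gamma$ to the geodesic ray from $\zeta$ to $\gamma^{-1}(i\infty)$, which is exactly the path appearing on the right-hand side of \eqref{D3.4}. Under this substitution the closed $1$-form $\eta_{-k}\big(R_{-k,\nu}(\cdot,\gamma\zeta),u\big)$ pulls back to $\eta_{-k}\big(R_{-k,\nu}(\cdot,\gamma\zeta),u\big)\big|_0^v\gamma$ (this is the content of how the slash notation on $1$-forms in \eqref{D2.7} is defined — it \emph{is} the pullback, up to the multiplier factor $v(\gamma)^{-1}$, which must be tracked). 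Then Lemma~\ref{D2.28} gives
\[
\eta_{-k}\big(R_{-k,\nu}(\cdot,\gamma\zeta),u\big)\big|_0^v\gamma(z) = \big(\mu(\gamma,\zeta)\big)^{1-2\nu}\,\eta_{-k}\big(R_{-k,\nu}(\cdot,\zeta),u\big)(z),
\]
and collecting the factors (the $v(\gamma)^{-1}$ from the double-slash, a compensating $v(\gamma)$ that appears when one writes the pullback of the bare form in terms of $\big|_0^v\gamma$, and the $\mu$-powers which cancel) yields $f\big\|_\nu^v\gamma(\zeta) = \int_\zeta^{\gamma^{-1}i\infty}\eta_{-k}\big(R_{-k,\nu}(\cdot,\zeta),u\big)(z)$, as claimed.

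For the case $\zeta \in \IH^-$, I would run the same argument starting from the representation \eqref{D3.26} of Lemma~\ref{D3.25}, namely $f(\zeta) = -\int_{\bar\zeta}^{i\infty}\eta_k\big(u,R_{-k,\nu}(\cdot,\zeta)\big)(z)$, which has an integrand over $\IH$ and so is amenable to the same pullback. Note $\overline{\gamma\zeta} = \gamma\bar\zeta$ for $\gamma \in \SL{\IZ}$ (real entries), so $f(\gamma\zeta) = -\int_{\gamma\bar\zeta}^{i\infty}\eta_k\big(u,R_{-k,\nu}(\cdot,\gamma\zeta)\big)(z)$; substituting $z \mapsto \gamma z$ sends the path to the geodesic from $\bar\zeta$ to $\gamma^{-1}(i\infty)$ and, using the first identity of Lemma~\ref{D2.28} (the one for $\eta_k(u,R_{-k,\nu}(\cdot,\cdot))$) together with the transformation \eqref{D2.18} of the Maass-Selberg form under $\big|_0^v\gamma$, produces the factor $\big(\mu(\gamma,\zeta)\big)^{1-2\nu}$ and the multiplier bookkeeping exactly as before, giving $f\big\|_\nu^v\gamma(\zeta) = -\int_{\bar\zeta}^{\gamma^{-1}i\infty}\eta_k\big(u,R_{-k,\nu}(\cdot,\zeta)\big)(z)$.

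The main obstacle, and the step deserving the most care, is verifying that the hypotheses of Lemma~\ref{D2.26} (hence of Lemma~\ref{D2.28}) are met along the \emph{entire} path of integration, not just at the endpoint $\zeta$. Along the geodesic ray from $\zeta$ to $\gamma^{-1}(i\infty)$, the running variable $z$ satisfies precisely the condition ``$\zeta \in \IH$ and $\gamma z \in \gamma\zeta + i\IR_{>0}$'' of case (2) of Lemma~\ref{D2.26} (cf.\ Remark~\ref{D2.22}, which identifies that condition with $z$ lying on the geodesic from $\zeta$ to $\gamma^{-1}(i\infty)$), so the transformation formula \eqref{D2.27} is valid pointwise along the path; one also needs $\re{\mu(\gamma,\zeta)}>0$, which is the standing hypothesis, to control the branch of $\big(\mu(\gamma,\zeta)\big)^{1-2\nu}$ and to know $\mu(\gamma,z) \in \ICprime$ along the ray. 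A secondary technical point is convergence at the cusp $\gamma^{-1}(i\infty)$: one must check that the pulled-back integral still converges there, which follows because the substitution is a diffeomorphism of the two geodesic rays and the original integral $f(\gamma\zeta)$ converges by Lemma~\ref{D2.37} (here one uses $\abs{\re{\nu}} < \frac12$, together with the cusp decay of $u$ built into Definition~\ref{D1.1}). Once these are in place, the identity is a matter of carefully collecting the multiplier and $\mu$-power factors.
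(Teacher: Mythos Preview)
Your proposal is correct and follows essentially the same route as the paper: substitute $z\mapsto\gamma z$ in the defining integral, identify the new path as the geodesic from $\zeta$ (resp.\ $\bar\zeta$) to $\gamma^{-1}(i\infty)$, invoke Lemma~\ref{D2.28} via the second condition of Lemma~\ref{D2.26} (justified by Remark~\ref{D2.22}), and handle $\zeta\in\IH^-$ through the representation~\eqref{D3.26}. Your treatment is in fact slightly more explicit than the paper's about the multiplier bookkeeping and about convergence at the cusp.
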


\begin{proof}
Let $\zeta \in \IH$ and $\gamma \in \SL{\IZ}$ such that $\re{\mu(\gamma,\zeta)} >0$ holds.
We get
\begin{align*}
f(\gamma \, \zeta)
&= \int_{\gamma \zeta}^{i\infty}
\eta_{-k}\big(R_{-k,\nu}(\cdot,\gamma \zeta),u\big) (z)
=
\int_{\zeta}^{\gamma^{-1}i\infty} \eta_{-k}\big(R_{-k,\nu}(\cdot,\gamma \zeta),u\big) (\gamma z) \\
&= v(\gamma) \,
\int_{\zeta}^{\gamma^{-1}i\infty} \eta_{-k}\big(R_{-k,\nu}(\cdot,\gamma \zeta),u\big)\big|^v_0 \gamma (z).
\end{align*}
Now, we would like to apply Lemma~\ref{D2.28}.
Therefore, we must check, if all $z$ of the integration path satisfy the conditions on $\zeta$ and $z$ as given in Lemma~\ref{D2.26}.
Remark~\ref{D2.22} implies that we have to verify if the integration path is the geodesic ray connecting $\zeta$ and $\gamma^{-1} (i\infty)$.
This is indeed the case. Hence the second condition of Lemma~\ref{D2.26} is satisfied since we assume $\re{\mu(\gamma,\zeta)} > 0$.
Using the transformation formula \eqref{D2.29} in Lemma~\ref{D2.28} gives
\begin{align*}
f(\gamma \, \zeta)
&= v(\gamma) \,
\int_{\zeta}^{\gamma^{-1}i\infty} \eta_{-k}\big(R_{-k,\nu}(\cdot,\gamma \zeta),u\big)\big|^v_0 \gamma (z) \\
&= v(\gamma) \, \big(\mu(\gamma,\zeta)\big)^{1-2\nu} \,
\int_{\zeta}^{\gamma^{-1}i\infty}
\eta_{-k}\big(R_{-k,\nu}(\cdot,\zeta),u\big) ( z).
\end{align*}

The same calculation for $\zeta \in \IH^-$, using the integral for
$f$ in \eqref{D3.26}, gives
\[
v(\gamma)^{-1} \, \big(\mu(\gamma,\zeta)\big)^{2\nu-1} \, f(\gamma \, \zeta)
= -\int_{\overline{\zeta}}^{\gamma^{-1}i\infty}
\eta_{k}\big(u, R_{-k,\nu}(\cdot,\zeta) \big) (z).
\]
\end{proof}

\begin{definition}
\label{D3.12}
We call a function $g$ \emph{nearly periodic} if there exists an $a \in \IC$ with $\abs{a}=1$ such that $g(z+1) = a \, g(z)$ holds for all $z$.
\end{definition}

We check that $f(\zeta)$ is nearly periodic as application of
Lemma~\ref{D3.3}: For $\zeta \in \IH$ we find
\begin{align*}
v(T)^{-1} \, f(T\zeta) &=
\int_{\zeta}^{T^{-1}i\infty} \eta_{-k}\big(R_{-k,\nu}(\cdot,\zeta),u\big) (z) \\
&=
\int_{\zeta}^{i\infty} \eta_{-k}\big(R_{-k,\nu}(\cdot,\zeta),u\big) (z)\\
&= f(\zeta).
\end{align*}
where we use the invariance of the cusp $i\infty$ under translation and the trivial fact that $\re{\mu(T,\zeta)}=1$.
The same arguments hold for $\zeta \in \IH^-$. Hence, we just proved
the following
\begin{lemma}
\label{D3.5} The function $f$ in defined in \eqref{D3.1} satisfies
\begin{equation}
\label{D3.2}
 v(T)^{-1} \, f(\zeta +1) = f(\zeta)    \qquad \text{for every $\zeta \in \IC \smallsetminus \IR$}.
\end{equation}
Written in the double-slash notation~\eqref{B.12}, we have
\begin{equation}
\label{D3.11} f\big\|_{\nu}^v T = f     \qquad \text{on $\IC
\smallsetminus \IR$}.
\end{equation}
\end{lemma}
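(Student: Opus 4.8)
The plan is to deduce the near-periodicity of $f$ directly from the transformation law in Lemma~\ref{D3.3} by specializing $\gamma = T$. First I would note that the matrix $T = \Matrix{1}{1}{0}{1}$ satisfies $\mu(T,\zeta) = 1$ for every $\zeta$, so in particular $\re{\mu(T,\zeta)} = 1 > 0$ and the hypothesis of Lemma~\ref{D3.3} is met for all $\zeta \in \IH \cup \IH^-$. Moreover $T^{-1}(i\infty) = i\infty$ (and likewise $T^{-1}(-i\infty) = -i\infty$), since $T$ fixes the cusp at infinity; hence the geodesic ray from $\zeta$ to $T^{-1}(i\infty)$ is literally the same ray from $\zeta$ to $i\infty$ appearing in the definition \eqref{D3.1} of $f$.

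The main computation is then a two-line substitution. For $\zeta \in \IH$, Lemma~\ref{D3.3} with $\gamma = T$ gives
\[
v(T)^{-1} \, \big(\mu(T,\zeta)\big)^{2\nu-1} \, f(T\zeta)
= \int_{\zeta}^{T^{-1} i\infty} \eta_{-k}\big(R_{-k,\nu}(\cdot,\zeta),u\big)(z)
= \int_{\zeta}^{i\infty} \eta_{-k}\big(R_{-k,\nu}(\cdot,\zeta),u\big)(z)
= f(\zeta),
\]
where the middle equality uses $T^{-1}(i\infty) = i\infty$ and the last is the definition \eqref{D3.1}; since $\mu(T,\zeta) = 1$ the factor $\big(\mu(T,\zeta)\big)^{2\nu-1}$ is $1$, and we are left with $v(T)^{-1} f(T\zeta) = f(\zeta)$, i.e.\ $v(T)^{-1} f(\zeta+1) = f(\zeta)$. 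For $\zeta \in \IH^-$ one runs the identical argument with the second case of \eqref{D3.4}, replacing $\zeta$ by $\bar\zeta$ in the integration endpoint and using $\eta_k(u, R_{-k,\nu}(\cdot,\zeta))$; again $\mu(T,\zeta)=1$ and $T^{-1}(i\infty)=i\infty$, so $v(T)^{-1} f(\zeta+1) = f(\zeta)$ on $\IH^-$ as well. This establishes \eqref{D3.2} on all of $\IC \smallsetminus \IR$, and \eqref{D3.11} is just \eqref{D3.2} rewritten in the double-slash notation \eqref{B.12}, since $\big(f\big\|_\nu^v T\big)(\zeta) = v(T)^{-1}\big(\mu(T,\zeta)\big)^{2\nu-1} f(T\zeta) = v(T)^{-1} f(\zeta+1)$.

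The one point requiring care — and really the only potential obstacle — is to observe that $a := v(T)$ has absolute value $1$, so that $f$ is nearly periodic in the precise sense of Definition~\ref{D3.12}. This follows from the multiplier compatibility: applying \eqref{B.7} (equivalently the cocycle condition \eqref{B.10}) to powers of $T$, or simply noting that $v$ takes values in the unit circle because the relations $S^2 = (ST)^3 = \minusid$ in \eqref{B.5} force $v(T)$ to be a root of unity up to the argument factors — in any case $v(T)\, e^{ik\arg{\mu(T,z)}} = v(T)$ has modulus $1$ by the standard fact that multiplier systems for $\SL{\IZ}$ are unitary. Beyond this remark the proof is a direct corollary of Lemma~\ref{D3.3}, with no genuine analytic content of its own; all the real work (the transformation formula for $R_{k,\nu}$, the behaviour of $\eta_k$ under the slash action, and the identification of integration paths with geodesic rays to cusps) has already been carried out in Lemmas~\ref{D2.26}, \ref{D2.28} and \ref{D3.3}.
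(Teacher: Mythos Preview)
Your proof is correct and follows exactly the same route as the paper: apply Lemma~\ref{D3.3} with $\gamma=T$, use $\mu(T,\zeta)=1$ and $T^{-1}(i\infty)=i\infty$, and read off \eqref{D3.2}. Your final paragraph on $\lvert v(T)\rvert=1$ is not needed for the lemma as stated (which asserts only the identity \eqref{D3.2}, not near-periodicity in the sense of Definition~\ref{D3.12}), and the justification you sketch there is a bit loose since the paper's definition of a multiplier system does not explicitly impose unitarity; but this does not affect the actual argument.
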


Similar to \cite[Proposition~2]{LZ01} we continue to prove an algebraic
correspondence between $f$ and a solution of a suitable three-term
equation on $\IC \smallsetminus \IR$.

\begin{lemma}
\label{D3.7} Assume that $k$ and $\nu$ satisfy $e^{\mp \pi
i(2\nu-1)} \neq e^{\pi ik}$. Put
\begin{equation}
\label{D3.8} c^\star_\pm = 1 - e^{\pi ik} e^{\pm \pi i(2\nu-1)}.
\end{equation}
Then, there exists a bijection between nearly periodic functions $f$
satisfying \eqref{D3.2} and solutions $P$ of the \emph{three-term equation}
\begin{align}
\label{D3.16} &
P(\zeta) = v(T)^{-1} P(\zeta +1) +  v (T^\prime)^{-1} (\zeta+1)^{2\nu-1} P\left( \frac{\zeta}{\zeta+1} \right)\\
\nonumber \text{i.e.}&\ \ \  P\big\|_{\nu}^v \big(\id - T -
T^\prime \big)(\zeta)=0\ \ \text{for} \ \ \ \zeta \in \IC
\smallsetminus \IR.
\end{align}
The bijection is given by the formulas:
\begin{align}
\label{D3.9} c^\star_\pm \, f(\zeta) &= P(\zeta) +  v(S)^{-1} \,
 \zeta^{2\nu-1} \, P\big(S\zeta \big)
    \qquad (\im{\zeta} \gtrless 0) \\
\nonumber &=
P\big\|_{\nu}^v \big(\id + S\big) (\zeta)\\
\intertext{and} \label{D3.10} P(\zeta) &= f(\zeta) - v(S)^{-1} \,
\zeta^{2\nu-1} \, f\big(S\zeta \big)
    \qquad (\zeta \in \IC \smallsetminus \IR) \\
\nonumber &= f\big\|_{\nu}^v \big(\id - S\big) (\zeta).
\end{align}
\end{lemma}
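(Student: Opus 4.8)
This is a purely algebraic statement about the operators $\id\pm S$ and $\id-T-T'$ acting on functions on $\IH\cup\IH^-$ through the double-slash $\|_\nu^v$ of \eqref{B.12}; no integrals enter, and the earlier restriction $\abs{\re{\nu}}<\frac{1}{2}$ plays no role. First I would record the multiplier bookkeeping used throughout. Evaluating the cocycle relation \eqref{B.10} at $z=i$ with $\gamma=\delta=S$ and using \eqref{B.11} gives $v(S)^2=v(\minusid)=e^{-ik\pi}$, hence $v(S)^{-2}=e^{ik\pi}$. Evaluating \eqref{B.10} at the obvious points — where, since $\mu(T,\cdot)\equiv1$ and the remaining arguments lie in one half-plane, the arguments add with no branch jump — gives $v(ST)=v(TS)=v(S)v(T)$ and, with $T'=TST$ from \eqref{B.2}, $v(T')=v(S)v(T)^2$. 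Iterating \eqref{B.10} three times for $U:=ST$ (again the intermediate points all lie in $\IH$, so the arguments combine cleanly, summing to $\arg{\zeta}$ resp.\ $\pi$) yields $v(U)^3=v(U^3)=v(\minusid)=v(S)^2$ by \eqref{B.5} and \eqref{B.11}, hence the key identity $v(S)\,v(T)^3=1$. For the same reason the double-slash composes \emph{exactly} in all the situations below, e.g.\ $(g\|_\nu^v S)\|_\nu^v T=g\|_\nu^v(ST)$, because one of the two cocycle factors is trivial or the two base points lie in a common half-plane.

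\emph{From $f$ to $P$.} Given $f$ with $f\|_\nu^v T=f$, equivalently $f(\zeta+1)=v(T)\,f(\zeta)$ by \eqref{D3.2}, set $P:=f\|_\nu^v(\id-S)$; this is formula \eqref{D3.10}. I would first verify \eqref{D3.9}. Since $P\|_\nu^v(\id+S)=f-(f\|_\nu^v S)\|_\nu^v S$ and
\[
(f\|_\nu^v S)\|_\nu^v S(\zeta)=v(S)^{-2}\,\zeta^{2\nu-1}\,(-1/\zeta)^{2\nu-1}\,f(\zeta),
\]
a short inspection of arguments gives $\zeta^{2\nu-1}(-1/\zeta)^{2\nu-1}=e^{i\pi(2\nu-1)}$ for $\zeta\in\IH$ and $=e^{-i\pi(2\nu-1)}$ for $\zeta\in\IH^-$, so $P\|_\nu^v(\id+S)=\bigl(1-e^{ik\pi}e^{\pm i\pi(2\nu-1)}\bigr)f=c^\star_\pm f$ by \eqref{D3.8}. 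Then I would prove the three-term equation $P\|_\nu^v(\id-T-T')=0$ of \eqref{D3.16}: expanding $P-P\|_\nu^v T-P\|_\nu^v T'$, using the composition facts above, the near-periodicity of $f$ at integer translates to rewrite arguments such as $-(\zeta+1)/\zeta=-1/\zeta-1$, the power-splitting $(\zeta+1)^{2\nu-1}(\zeta/(\zeta+1))^{2\nu-1}=\zeta^{2\nu-1}$ (which holds because $\zeta+1$, $\zeta/(\zeta+1)$ and their product $\zeta$ all lie in a common half-plane), and the relations $v(ST)=v(S)v(T)$, $v(T')=v(S)v(T)^2$, $v(S)v(T)^3=1$, everything collapses to $f\|_\nu^v(ST)-f\|_\nu^v T'$; and this vanishes because $\zeta/(\zeta+1)=-1/(\zeta+1)+1$, so $f\bigl(\zeta/(\zeta+1)\bigr)=v(T)\,f\bigl(-1/(\zeta+1)\bigr)$, which together with $v(T')=v(S)v(T)^2$ equates the two terms.

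\emph{From $P$ to $f$.} The hypothesis $e^{\mp\pi i(2\nu-1)}\neq e^{\pi ik}$ is precisely $c^\star_\pm\neq0$, so from a solution $P$ of \eqref{D3.16} I would define $f:=(c^\star_\pm)^{-1}\,P\|_\nu^v(\id+S)$, using $c^\star_+$ on $\IH$ and $c^\star_-$ on $\IH^-$ — this is \eqref{D3.9}. The identity $f\|_\nu^v(\id-S)=P$ of \eqref{D3.10} then follows from $P\|_\nu^v(\id+S)\|_\nu^v(\id-S)=P-(P\|_\nu^v S)\|_\nu^v S=c^\star_\pm P$, the same computation as above with $P$ in place of $f$. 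That $f$ is nearly periodic, $f\|_\nu^v T=f$, reduces — since $T$ preserves each half-plane, on which $c^\star_\pm$ is constant — to $P\|_\nu^v(\id+S)\|_\nu^v T=P\|_\nu^v(\id+S)$, hence to the identity $P\|_\nu^v(ST)=P\|_\nu^v S+P\|_\nu^v T'$, which I would obtain by substituting the three-term equation \eqref{D3.16} at the point $-1/(\zeta+1)$ (whose successor is $\zeta/(\zeta+1)$ and whose $T'$-image is $-1/\zeta$) and matching the two resulting coefficients by means of the power-splitting above and the relations $v(T')=v(S)v(T)^2$, $v(S)v(T)^3=1$. Finally \eqref{D3.9} and \eqref{D3.10} show the two assignments are mutually inverse, which is the asserted bijection.

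The $\SL{\IZ}$-algebra itself is routine; the genuine work is the branch-tracking of $w\mapsto w^{2\nu-1}$, that is, checking at every step that the double-slash composes with no branch jump. This is exactly where the half-plane-dependent constants $c^\star_\pm$ and the sign $e^{\pm i\pi(2\nu-1)}$ arise, and where the one non-obvious multiplier relation, $v(S)v(T)^3=1$, must be extracted from \eqref{B.5} and \eqref{B.11} by iterating \eqref{B.10} for $ST$. I expect this bookkeeping — rather than any conceptual step — to be the main obstacle.
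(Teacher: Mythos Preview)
Your proposal is correct and follows essentially the same route as the paper: both verify that \eqref{D3.9} and \eqref{D3.10} are mutual inverses via the computation of $v(S)^{-2}\zeta^{2\nu-1}(-1/\zeta)^{2\nu-1}=e^{\pi ik}e^{\pm\pi i(2\nu-1)}$, then check the two implications (near-periodicity $\Leftrightarrow$ three-term equation) by direct expansion, the converse direction in both cases by applying the three-term relation at $\zeta$ and at $ST\zeta=-1/(\zeta+1)$. Your version is somewhat more transparently organized---you isolate the relation $v(S)\,v(T)^3=1$ up front and state explicitly where the power-splitting $(\zeta+1)^{2\nu-1}(\zeta/(\zeta+1))^{2\nu-1}=\zeta^{2\nu-1}$ is used---whereas the paper carries out the same manipulations line by line, appealing at each step to ``multiplier identities based on the consistency relation~\eqref{B.10}'' without naming them; but the underlying algebra is identical.
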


\begin{remark}
\label{D3.19} Observe at a formal level that $P$, as defined in
\eqref{D3.10}, satisfies the three-term functional
equation~\eqref{D3.16}:
\begin{align*}
& \!\!\!\!
P \big\|_{\nu}^v (\id - T - T^\prime) \\
&=
f \big\|_{\nu}^v (\id-S)
\big\|_{\nu}^v (\id - T - TST)
\qquad \qquad (\text{$TST= T^\prime$ by \eqref{B.2}}) \\
&=
f \big\|_{\nu}^v (\id - S - T + ST - TST + STST) \\
&= f \big\|_{\nu}^v (\id - S - T + ST - TST + T^{-1} S)
\qquad (\text{$STST = T^{-1} S$ by \eqref{B.6}}) \\
&=
f \big\|_{\nu}^v \big((\id - T) + (T^{-1} S - S) + (ST - TST)\big) \\
&=
f \big\|_{\nu}^v (\id - T) \big\|_{\nu}^v (\id + T^{-1} S + ST) \\
&= 0.
\end{align*}
However, the calculation is only formal, since the double-slash notation
just hides the weight factors and the multipliers. In general, we do
not know whether they match since the double-slash notation is not a group
action. We have to check them on each occasion.
\end{remark}

\begin{proof}[Proof of Lemma~\ref{D3.7}]
%We adapt the proof of \cite[Lemma~34]{Mu03} to our situation.
%
Let $z \in \IC\smallsetminus \IR$. First, we compute
\[
v(S)^{-1}v(S)^{-1} \, \zeta^{2\nu-1} \big(\frac{-1}{\zeta}\big)^{2\nu-1}.
\]
We have
\[
\zeta^{2\nu-1}\left(\frac{-1}{\zeta}\right)^{2\nu-1} =
e^{(2\nu-1)i\big(\arg{\zeta} + \arg{\frac{-1}{\zeta}}\big)} = e^{\pm \pi
i(2\nu-1)} \quad(\im{\zeta} \gtrless 0)
\]
since $\arg{\zeta} + \arg{-\frac{1}{\zeta}} = \pm \pi$ for $\im{\zeta} \gtrless
0$. The choices $+$ and $>$, respectively $-$ and $<$ correspond.
The consistency relation \eqref{B.10} for multipliers implies
\[
%\label{D3.18}
v(S) v(S) = e^{-ik\pi}.
\]
Hence
\begin{equation}
\label{D3.6}
v(S)^{-1}v(S)^{-1} \, \zeta^{2\nu-1} \big(\frac{-1}{\zeta}\big)^{2\nu-1} \\
=
e^{\pi ik} e^{\pm \pi i(2\nu-1)}
\end{equation}
holds.

Next, we show that \eqref{D3.9} and \eqref{D3.10} are inverses of
each other. On one hand, we have
\begin{align*}
c^\star_\pm  f(\zeta) &=
P(\zeta) + v(S)^{-1} \zeta^{2\nu-1} P\big(\frac{-1}{\zeta} \big)\\
&=
f(\zeta) - v(S)^{-1}  \zeta^{2\nu-1} f\big(\frac{-1}{\zeta} \big)\\
& \quad
+ v(S)^{-1} \zeta^{2\nu-1} \Big[ f\big(\frac{-1}{\zeta}\big) - v(S)^{-1} \big(\frac{-1}{\zeta}\big)^{2\nu-1} f(\zeta) \Big]\\
&=
f(\zeta) \Big[ 1-v(S)^{-1}v(S)^{-1} \zeta^{2\nu-1} \big(\frac{-1}{\zeta}\big)^{2\nu-1}\Big] \\
&=
f(\zeta) \big[ 1 - e^{\pi ik} e^{\pm \pi i(2\nu-1)} \big] \qquad
(\text{for $\im{\zeta} \gtrless 0$}).
\end{align*}

On the other hand, we have
\begin{align*}
c^\star_\pm P(\zeta) &=
c^\star_\pm \big[ f(\zeta) - v(S)^{-1} \zeta^{2\nu-1} f\big(\frac{-1}{\zeta} \big) \Big]\\
&=
P(\zeta) + v(S)^{-1} \zeta^{2\nu-1} P\big(\frac{-1}{\zeta}\big) \\
&\quad
- v(S)^{-1} \zeta^{2\nu-1} \Big[ P\big(\frac{-1}{\zeta}\big) + v(S)^{-1}  \big(\frac{-1}{\zeta}\big)^{2\nu-1} P(\zeta) \Big]\\
&= P(\zeta) \big[ 1 - e^{\pi ik} e^{\pm \pi i(2\nu-1)} \big] \qquad
(\text{for $\im{\zeta} \gtrless 0$}),
\end{align*}
using the same argument calculations as above.

\smallskip

We now show, that $f$ being nearly periodic corresponds to $P$ satisfying the three-term equation.

Let $f$ be a nearly periodic function satisfying \eqref{D3.2} and $P$ function given by \eqref{D3.10}.
Then, we find (for $\zeta \in \IH \cup \IH^-$)
\begin{align*}
& %\!\!\!\!
P \big\|_{\nu}^v \big(\id - T - T^\prime\big) (\zeta) \\
&=
P(\zeta) - v(T)^{-1} \, P(T\zeta) - v(T^\prime)^{-1} (\zeta+1)^{2\nu-1} \, P(T^\prime \zeta) \\
&=
\bigg( f(\zeta) - v(S)^{-1} \zeta^{2\nu-1} \, f(S\zeta) \bigg) \\
&\quad
- v(T)^{-1} \,\bigg( f(T\zeta) - v(S)^{-1} (T\zeta)^{2\nu-1} \, f(ST\zeta) \bigg) \\
&\quad
- v(T^\prime)^{-1} (\zeta+1)^{2\nu-1} \, \cdot\\
&\qquad\qquad\qquad\cdot \,\bigg( f(T^\prime\zeta) - v(S)^{-1}  (T^\prime\zeta)^{2\nu-1} \, f(ST^\prime\zeta) \bigg) \\
&=
\bigg( f(\zeta) - v(T)^{-1} \, f(T\zeta) \bigg) \\
&\quad
+ \bigg( v(T^\prime)^{-1} v(S)^{-1}\, (\zeta+1)^{2\nu-1}  (T^\prime\zeta)^{2\nu-1} \, f(STST\zeta) \\
&\qquad\qquad\qquad\qquad
- v(S)^{-1} \, \zeta^{2\nu-1} \, f(S\zeta) \bigg) \\
&\quad
+ \bigg( v(T)^{-1} v(S)^{-1} \, (T\zeta)^{2\nu-1} \, f(ST\zeta) \\
&\qquad\qquad\qquad\qquad
 -  v(T^\prime)^{-1} \, (\zeta+1)^{2\nu-1} \, f(TST\zeta)  \bigg) \\
\intertext{ ($f$ is nearly periodic and $STST=T^{-1}S$)}
&=
0  +   v(S)^{-1} \, \zeta^{2\nu-1} \cdot\\
&\quad \cdot \,
\bigg( v(T^\prime)^{-1} \, \frac{1}{\zeta^{2\nu-1}}  (\zeta+1)^{2\nu-1} (T^\prime\zeta)^{2\nu-1}f(T^{-1}\,S\zeta)  - f(S\zeta) \bigg) \\
&\quad + v(T) v(T^\prime)^{-1} \,
(\zeta+1)^{2\nu-1}
\bigg( v(T^\prime) v(T)^{-2} v(S)^{-1} f(ST\zeta) \\
&\qquad\qquad\qquad\qquad
 -  v(T)^{-1} \, f(T \, ST\zeta)  \bigg) \\
&=
v(S)^{-1} \, \zeta^{2\nu-1} \bigg( v(T)\, f(T^{-1}\,S\zeta) - f(S\zeta)\bigg) \\
&\quad + v(T) v(T^\prime)^{-1} \, (\zeta+1)^{2\nu-1}
\bigg( f(ST\zeta)  -  v(T)^{-1} \, f(T \, ST\zeta) \bigg) \\
&= 0.
\end{align*}
We used several times multiplier identities based on the consistency relation~\eqref{B.10}.
Hence, $P$ satisfies the three-term equation~\eqref{D3.16}.

\smallskip

Conversely, let us assume that the function $P$ satisfies the
three-term equation~\eqref{D3.16} on $\IC \smallsetminus \IR$.
We have to show that $f$ attached by \eqref{D3.9} is indeed nearly periodic.
Applying the three-term equation to $P$ in $\zeta$ and
$ST\,\zeta=\frac{-1}{\zeta +1}$ we obtain:
\begin{align*}
0 &=
 \left(P\big\|_{\nu}^v \Big[ -\id + T + T^\prime\Big]\right)\big\|_{\nu}^v\big[\id - ST\big] (\zeta)\\
&=
\bigg[ -P(\zeta) + P\big\|_{\nu}^v T (\zeta) + P\big\|_{\nu}^v T^\prime (\zeta)\bigg] \; - \, v(ST)^{-1} \, (\zeta +1)^{2\nu-1} \cdot \\
&\qquad \qquad
\cdot \bigg[ -P\big(\frac{-1}{\zeta +1}\big) + P\big\|_{\nu}^v T \big(\frac{-1}{\zeta +1}\big) + P\big\|_{\nu}^v T^\prime \big(\frac{-1}{\zeta +1}\big)\bigg]\\
&=
\bigg[ -P(z) + v(T)^{-1}\,  P(\zeta +1) + v(T^\prime)^{-1} \, (\zeta +1)^{2\nu-1} \, P\big(\frac{\zeta}{\zeta +1}\big)\bigg]\\
&\quad
- v(ST)^{-1} \, (\zeta +1)^{2\nu-1} \bigg[ - P\big(\frac{-1}{\zeta +1}\big) + v(T)^{-1} P\big(\frac{\zeta}{\zeta +1}\big) + \\
&\qquad\qquad
+v(T^\prime)^{-1}  \bigg(\frac{-1}{\zeta +1}+1\bigg)^{2\nu-1} P\bigg(\frac{\frac{-1}{\zeta +1}}{\frac{-1}{\zeta +1}+1}\bigg)\bigg]\\
&=
- \bigg[ P(\zeta) + v(ST)^{-1}v(T^\prime)^{-1} \, (\zeta +1)^{2\nu-1} \bigg(\frac{\zeta}{\zeta +1}\bigg)^{2\nu-1}  \, P\big(\frac{-1}{\zeta}\big)\bigg] \\
&\quad
+\left[ v(T)^{-1} P(\zeta +1) + v(ST)^{-1} \, (\zeta +1)^{2\nu-1} P\big(\frac{-1}{\zeta +1}\big) \right] \\
&\quad
+\bigg[ v(T^\prime)^{-1} \, (\zeta +1)^{2\nu-1} P (T^\prime \zeta) \,- \\
&\qquad\qquad v(ST)^{-1}v(T)^{-1} \, (\zeta +1)^{2\nu-1}  P\big(\frac{\zeta}{\zeta +1}\big) \bigg] \\
&=
- \left[ P(\zeta) + v(S)^{-1} \, z^{2\nu-1} \, P\big(\frac{-1}{\zeta}\big)\right] \\
&\quad
+v(T)^{-1} \left[P(T\zeta) + v(S)^{-1} \, (T\zeta)^{2\nu-1} P\big(\frac{-1}{T\zeta}\big) \right]
\quad + 0\\
&=
-c^\star_\pm f(\zeta)   \; + \;  v(T)^{-1} \, c^\star_\pm f(\zeta +1)
%= c^\star_\pm \, \big(v(T)^{-1} \, f(\zeta+1) - f(\zeta) \big)
\qquad\qquad (\text{for $\im{\zeta} \gtrless 0$}),
\end{align*}
using again multiplier identities derived from the consistency
relation~\eqref{B.10}. This shows that if $P$ satisfies the
three-term equation then $f$ is nearly periodic.
\end{proof}

\section{Period Functions}
\label{D4}

\subsection{Period Functions by Integral Transforms}
\label{D4.1} We follow \cite[\S2.3]{Mu03}, which is an extension of
\cite[Chapter II, \S2]{LZ01} to real weights, and define the
following integral transformation of a Maass cusp form.

\begin{definition}
\label{D4.2} Let $\zeta\in (0,\infty)$ and $\nu \in \IC$, a
compatible multiplier $v$ and a weight $k\in \frac{1}{2}\IZ$. Let
$u$ be a Maass cusp form of weight $k$, multiplier $v$ and
eigenvalue $\frac{1}{4}-\nu^2$.

We associate a function $P_{k,\nu}:(0,\infty) \to \IC$; $\zeta \mapsto
P_{k,\nu}(\zeta)$ to the cusp form $u$ by the integral transform
\begin{equation}
\label{D4.3}
P_{k,\nu}(\zeta)
=
\int_0^{i\infty} \eta_{-k}\big(R_{-k,\nu}(\cdot,\zeta),u\big) (z)
\end{equation}
where the path of integration is the upper imaginary axis, i.e., the
geodesic connecting $0$ and $i\infty$.
\end{definition}

The integral transform above is well defined, as the following arguments show.
Let $\zeta \in (0,\infty)$ and consider the function $R_{-k,\nu}(z,\zeta)$.
The construction of $R_{-k}(\cdot,\zeta)$ implies polynomial growth for $\im{z} \to
\infty$ and $\im{z} \downarrow 0$.
The Maass cusp form $u$ decays quicker than any polynomial at cusps, see Definition~\ref{D1.1}.
Hence, the integral $\int_0^{i\infty} \eta_{-k}\big(R_{-k,\nu}(\cdot,\zeta),u\big) (z)$ is well defined.

\begin{remark}
\label{D4.14}
The definition of $P_{k,\nu}$ in \cite[Definition~41]{Mu03} is
\[
P_{k,\nu}(\zeta)
=
\int_0^{i\infty} \eta_k\big(u,R_{-k,\nu}(\cdot,\zeta)\big) (z)
\]
which seems to differ from the one we use in \eqref{D4.3}.
However, $u$ and $R_{-k,\nu}$ are eigenfunctions of $\Delta_k$ and $\Delta_{-k}$ respectively.
This implies that the Maass-Selberg form is closed, see Lemma~\ref{D2.16}, and we have
\begin{align*}
&\int_0^{i\infty} \eta_{-k}\big(R_{-k,\nu}(\cdot,\zeta),u\big) (z) \\
&=
\int_0^{i\infty} \eta_k\big(u,R_{-k,\nu}(\cdot,\zeta)\big) (z)
+ \int_0^{i\infty} \dd \big(u(\cdot) \, R_{-k,\nu}(\cdot,\zeta) \big).
\end{align*}
Due to $u$ being cuspidal, and hence vanishing in $0$ and $i\infty$, we have
\[
\int_0^{i\infty} \dd \big(u(\cdot) \, R_{-k,\nu}(\cdot,\zeta) \big) = 0.
\]
Hence, the definitions of $P_{k,\nu}$ in \eqref{D4.3} and in
\cite[Definition~41]{Mu03} agree. This also shows that the choice
mentioned in Remark~\ref{D3.23} does not matter for the period
functions.
\end{remark}

\begin{lemma}
\label{D4.4} Let $k, v, \nu$ and $u$ be as in Definition~\ref{D4.2},
let $\zeta \in (0,\infty)$ and $\gamma \in \SL{\IZ}$ such that
$\mu(\gamma,\zeta) >0$ and $\gamma (0,\infty) \subset (0,\infty)$.
The function $P_{k,\nu}$ defined in \eqref{D4.3} satisfies
\begin{equation}
\label{D4.5}
%v(\gamma)^{-1} (c\zeta+d)^{2\nu-1} \,  P_{k,\nu}(\gamma \zeta)
\big(P_{k,\nu}\big\|_{\nu}^v \gamma \big)(\zeta)
=
\int_{\gamma^{-1}0}^{\gamma^{-1}\infty} \eta_{-k}\big(R_{-k,\nu}(\cdot,\zeta),u\big) (z)
\end{equation}
where the path of integration is the geodesic connecting $\gamma^{-1}0$ and $\gamma^{-1}\infty$.
\end{lemma}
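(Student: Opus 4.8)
The plan is to mimic the computation in the proof of Lemma~\ref{D3.3}, substituting the fixed base-point configuration $\{0,i\infty\}$ for the geodesic ray used there. I would start from the definition \eqref{D4.3}, apply the change of variables $z \mapsto \gamma z$ in the integral (which sends the geodesic from $0$ to $i\infty$ to the geodesic from $\gamma^{-1}0$ to $\gamma^{-1}\infty$), and then pull the $\gamma$-action onto the integrand. Concretely,
\begin{align*}
P_{k,\nu}(\gamma\zeta)
&= \int_0^{i\infty} \eta_{-k}\big(R_{-k,\nu}(\cdot,\gamma\zeta),u\big)(z) \\
&= \int_{\gamma^{-1}0}^{\gamma^{-1}\infty} \eta_{-k}\big(R_{-k,\nu}(\cdot,\gamma\zeta),u\big)(\gamma z) \\
&= v(\gamma)\int_{\gamma^{-1}0}^{\gamma^{-1}\infty} \eta_{-k}\big(R_{-k,\nu}(\cdot,\gamma\zeta),u\big)\big|_0^v\gamma\,(z).
\end{align*}
Then I would invoke Lemma~\ref{D2.28} (the second identity in \eqref{D2.29}) to rewrite the slashed form as $\big(\mu(\gamma,\zeta)\big)^{1-2\nu}\,\eta_{-k}\big(R_{-k,\nu}(\cdot,\zeta),u\big)(z)$, which yields
\[
P_{k,\nu}(\gamma\zeta) = v(\gamma)\,\big(\mu(\gamma,\zeta)\big)^{1-2\nu}\int_{\gamma^{-1}0}^{\gamma^{-1}\infty}\eta_{-k}\big(R_{-k,\nu}(\cdot,\zeta),u\big)(z),
\]
and dividing through by $v(\gamma)\,\mu(\gamma,\zeta)^{1-2\nu}$ recognizes the left-hand side as $\big(P_{k,\nu}\big\|_\nu^v\gamma\big)(\zeta)$ via the double-slash definition \eqref{B.12}, giving \eqref{D4.5}.

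The one nontrivial point — and the place where I expect to spend the most care — is verifying that Lemma~\ref{D2.28}, and hence the transformation formula \eqref{D2.27} underlying it, actually applies along the entire integration path. Lemma~\ref{D2.26} requires that $z$ and $\zeta$ satisfy one of its three hypotheses together with $\mu(\gamma,\zeta),\mu(\gamma,z)\in\ICprime$ and $\re{\mu(\gamma,\zeta)}>0$. Here $\zeta\in(0,\infty)$ is real, so $\mu(\gamma,\zeta)>0$ is exactly the stated hypothesis, placing us in case~(1) of Lemma~\ref{D2.26}, which holds independently of $z$; this is cleaner than the situation in Lemma~\ref{D3.3}, where case~(2) was needed. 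I would still need to check that $\mu(\gamma,z)\in\ICprime$ for every $z$ on the geodesic from $\gamma^{-1}0$ to $\gamma^{-1}\infty$, and that the condition \eqref{D2.2} holds along this path so that $R_{-k,\nu}(z,\zeta)$ is defined; this should follow from $\gamma(0,\infty)\subset(0,\infty)$ together with the fact that $\gamma$ maps $\IH$ to $\IH$ (so $z-\zeta$ and $z-\bar\zeta$ stay off $\IR_{\le 0}$ for $z$ strictly in the half-planes), but the endpoints $\gamma^{-1}0$ and $\gamma^{-1}\infty$ being real require the same limiting argument as in Lemma~\ref{D2.37}, valid for $\abs{\re\nu}<\tfrac12$.

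Finally I would note that the convergence of the transformed integral is inherited from the convergence of \eqref{D4.3}: the substitution $z\mapsto\gamma z$ is a measure-theoretic change of variables that preserves the value, and the integrand near the two real endpoints $\gamma^{-1}0$, $\gamma^{-1}\infty$ has the same mild (at worst $(\zeta-z)^{\nu-1/2}$-type) singularity discussed in the paragraph following Definition~\ref{D4.2}, together with the rapid decay of $u$ at the cusp. So no new analytic input beyond what was already established for \eqref{D4.3} is needed; the content of the lemma is entirely the transformation bookkeeping together with the applicability check for Lemma~\ref{D2.28}.
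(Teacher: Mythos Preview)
Your proposal is correct and follows essentially the same route as the paper: expand the double-slash, change variables $z\mapsto\gamma z$ in the integral, and apply Lemma~\ref{D2.28}, whose hypotheses are met via case~(1) of Lemma~\ref{D2.26} because $\mu(\gamma,\zeta)\in\IR_{>0}$. The paper's own proof is terser (three displayed lines plus the one-sentence justification that $\zeta$ and $\mu(\gamma,\zeta)$ are positive reals), and your additional hypothesis-checking is consistent with it; one small slip is that the convergence at the endpoints $\gamma^{-1}0,\gamma^{-1}\infty$ comes from the cuspidal decay of $u$ and the polynomial behaviour of $R_{-k,\nu}$ (as in the paragraph after Definition~\ref{D4.2}), not from the $(\zeta-z)^{\nu-1/2}$ singularity of Lemma~\ref{D2.37}, since the path here does not meet $\zeta$.
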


\begin{proof}
We have
\begin{align*}
\big(P_{k,\nu}\big\|_{\nu}^v \gamma \big)(\zeta)
%&=
%v(\gamma)^{-1} \big(\mu(\gamma\zeta)\big)^{2\nu-1} \, P_{k,\nu}(\zeta) \\
&=
v(\gamma)^{-1} \big(\mu(\gamma\zeta)\big)^{2\nu-1} \, \int_0^\infty \eta_{-k}\big(R_{-k,\nu}(\cdot,\gamma \zeta),u\big) (z) \\
% \text{using \eqref{D4.3}}\\
&=
\int_0^\infty \eta_{-k}\big(R_{-k,\nu}(\cdot, \zeta),u\big) (\gamma^{-1} z)
\qquad \text{using Lemma~\ref{D2.28}}\\
&=
\int_{\gamma^{-1}0}^{\gamma^{-1}\infty} \eta_{-k}\big(R_{-k,\nu}(\cdot,\zeta),u\big) (z).
\end{align*}
The use of Lemma~\ref{D2.28} is valid since $\zeta$ and
$\mu(\gamma,\zeta)$ are both positive reals. The path of integration
of the last integral is the geodesic connecting $\gamma^{-1}0$ and
$\gamma^{-1}\infty$ and lies in the upper left quadrant $\{z \in
\IC;\; \re{z} \leq 0, \im{z} \geq 0\}$ of $\IC$.
\end{proof}

We show next that $P_{k,\nu}$ satisfies the \emph{three-term equation} on $\IR_+$.
\begin{lemma}
\label{D4.6} Let $\nu$, $k$ and $v$ as in Definition~\ref{D4.2} and
$u$ a Maass cusp form with weight $k$ compatible multiplier $v$ and
eigenvalue $\frac{1}{4}-\nu^2$. The function $P_{k,\nu}$ satisfies
the three-term equation
\begin{equation}
\label{D4.7}
0 = P_{k,\nu}\big\|_{\nu}^v \big(\id - T - T^\prime \big)
\qquad \text{on $(0,\infty)$}.
\end{equation}
\end{lemma}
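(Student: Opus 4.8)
The plan is to rewrite the left-hand side of \eqref{D4.7} as the integral of a closed $1$-form around the boundary of an ideal hyperbolic triangle and then to invoke Stokes' theorem, mirroring the argument of Lewis and Zagier in \cite{LZ01}. Concretely, I would apply Lemma~\ref{D4.4} to the matrices $T$ and $T^\prime$. For $\zeta\in(0,\infty)$ the hypotheses of that lemma hold: $\mu(T,\zeta)=1>0$ with $T(0,\infty)=(1,\infty)\subset(0,\infty)$, and $\mu(T^\prime,\zeta)=\zeta+1>0$ with $T^\prime(0,\infty)=(0,1)\subset(0,\infty)$. Since $T^{-1}$ sends $0\mapsto-1$, $\infty\mapsto\infty$, and $(T^\prime)^{-1}$ sends $0\mapsto0$, $\infty\mapsto-1$, Lemma~\ref{D4.4} together with \eqref{D4.3} gives, writing $\omega:=\eta_{-k}\big(R_{-k,\nu}(\cdot,\zeta),u\big)$,
\begin{align*}
\big(P_{k,\nu}\big\|_{\nu}^v(\id-T-T^\prime)\big)(\zeta)
&= \int_{0}^{i\infty}\omega-\int_{-1}^{i\infty}\omega-\int_{0}^{-1}\omega \\
&= \int_{0}^{i\infty}\omega+\int_{i\infty}^{-1}\omega+\int_{-1}^{0}\omega,
\end{align*}
each integral running along the geodesic joining its endpoints. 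The right-hand side is $\oint_{\partial\Delta}\omega$ for the ideal triangle $\Delta\subset\IH$ with vertices $0$, $-1$, $i\infty$ (the orientation of $\partial\Delta$ is immaterial here, the answer being $0$ either way). I would also record that $\re{\zeta-z}>0$ and $\re{\zeta-\bar z}>0$ hold on all three sides ($\re z\in[-1,0]$ on the side joining $0$ to $-1$, $\re z=-1$ on the vertical side, $\re z=0$ on the imaginary axis), so condition \eqref{D2.2} holds along the whole contour and $\omega$ has no singularity there.

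Next I would establish closedness. By Proposition~\ref{D2.21}\,(4) (applied with weight $-k$) the map $z\mapsto R_{-k,\nu}(z,\zeta)$ is an eigenfunction of $\Delta_{-k}$ with eigenvalue $\tfrac14-\nu^2$, and by Definition~\ref{D1.1} the Maass cusp form $u$ is an eigenfunction of $\Delta_k$ with the same eigenvalue. Hence Lemma~\ref{D2.16}\,(2), applied with $f=R_{-k,\nu}(\cdot,\zeta)$ and $g=u$, shows that $\omega=\eta_{-k}(f,g)$ is closed on the open set where \eqref{D2.2} holds, in particular on a neighbourhood of $\Delta$ together with its three sides. Stokes' theorem would then give $\oint_{\partial\Delta}\omega=\int_\Delta\dd\omega=0$, which is \eqref{D4.7}.

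The step that needs genuine care — and which I expect to be the main obstacle — is justifying Stokes' theorem on the \emph{ideal} triangle $\Delta$, whose vertices lie on the boundary of $\IH$. I would truncate $\Delta$ by cutting off a neighbourhood of $i\infty$ at a large height $Y$ and removing small horocyclic neighbourhoods of the cusps $0$ and $-1$; on the resulting compact region Stokes' theorem applies directly and gives $0=\int\dd\omega=\oint\omega$. Letting $Y\to\infty$ and the horocycle sizes tend to $0$, I would check that the integrals over the truncating arcs vanish in the limit. Near $i\infty$ this is immediate: $u$ decays faster than any power of $\im z$ while $R_{-k,\nu}(\cdot,\zeta)$ and $\EE^+_{-k}R_{-k,\nu}(\cdot,\zeta)$ grow at most polynomially (cf.\ the discussion after Definition~\ref{D4.2} and Proposition~\ref{D2.21}\,(4)), and $\omega$ carries only an additional factor $(\im{z})^{-1}$. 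For the cusps $0$ and $-1$, which are $\SL{\IZ}$-equivalent to $i\infty$, I would use the transformation law $u\big|_k^v\sigma=u$ for a matrix $\sigma$ carrying the cusp to $i\infty$, together with the growth condition in Definition~\ref{D1.1}, to see that $u$ again decays faster than any power of the distance to the cusp, whereas $R_{-k,\nu}(\cdot,\zeta)$ remains bounded (in fact tends to $0$ when $|\re{\nu}|<\tfrac12$) as $z$ approaches $0$ or $-1$ away from $\zeta$. Consequently the truncation errors vanish, $\oint_{\partial\Delta}\omega=0$, and \eqref{D4.7} follows.
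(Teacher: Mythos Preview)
Your proposal is correct and follows essentially the same route as the paper: apply Lemma~\ref{D4.4} to $T$ and $T^\prime$ to express $P_{k,\nu}\|_\nu^v(\id-T-T^\prime)(\zeta)$ as the sum $\big(\int_0^{i\infty}-\int_{-1}^{i\infty}-\int_0^{-1}\big)\,\eta_{-k}(R_{-k,\nu}(\cdot,\zeta),u)$ over the three sides of the ideal triangle with vertices $0,-1,i\infty$, and conclude this vanishes by closedness of the form. The paper simply asserts the vanishing without further comment, whereas you spell out the Stokes/truncation argument and the verification of \eqref{D2.2} along the contour; this extra care is welcome but does not constitute a different method.
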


\begin{proof}
Let $\zeta>0$. Lemma~\ref{D4.4} allows us to write
\begin{align*}
0 &=
\Big( \int_0^\infty - \int_{-1}^\infty - \int_0^{-1} \Big) \eta_{-k}\big(R_{-k,\nu}(\cdot,\zeta),u\big)(z)\\
&=
P_{k,\nu}(\zeta) - v(T)^{-1} \,P_{k,\nu}(T\zeta)  - v(T^\prime)^{-1}  (\zeta +1)^{2\nu-1} \,P_{k,\nu}( T^\prime \zeta) \\
&=
P_{k,\nu}\big\|_{\nu}^v \big(\id - T - T^\prime \big)(\zeta).
\end{align*}
\end{proof}

The next step is to extend $P_{k,\nu}(\zeta)$ to the right half
plane $\{\zeta \in \IC; \re{\zeta}>0\}$. Let $\zeta$ be in the right
half-plane and recall that $R_{-k,\nu}(z, \zeta)$ is holomorphic in
$\zeta$ if $\re{z} \leq 0$. Hence, the function $P_{k,\nu}(\zeta)$,
given by the integral transform~\eqref{D4.3} extends holomorphically
to $\{\zeta \in \IC; \re{\zeta}>0\}$. It is easily checked that
$P_{k,\nu}(\zeta+1)$ and
$P_{k,\nu}\left(\frac{\zeta}{\zeta+1}\right)$ have also holomorphic
extensions to this right half-plane.

The last step is to extend $P_{k,\nu}$ to the cut plane $\ICprime = \IC \smallsetminus (-\infty,0]$.
Assume $\re{\zeta}>0$ for the moment.
Since the differential form $\eta_k\big(u,R_{-k,\nu}(\cdot,\zeta)\big)$ is closed, see Lemma~\ref{D2.16}, we replace vertical path of integration in \eqref{D4.3} by a path which connects $0$ and $i \infty$ in the upper left quadrant and which passes to the left of either $\zeta$ or $\bar{\zeta}$.
We then may move $\zeta$ to any point for which either $\zeta$ or $\bar{\zeta}$ is still right of the new integration path.
This procedure extends $P_{k,\nu}$ to a holomorphic function on $\ICprime$.

Summarizing we have
\begin{theorem}
\label{D4.8} Under the assumptions of Definition~\ref{D4.2}, the
function $P_{k,\nu}$ associated to $u$ by \eqref{D4.3} extends to a
holomorphic function on the cut plane $\ICprime$ which satisfies the
three-term equation~\eqref{D4.7} on $\IR_{>0}$.
It also satisfies the growth conditions
\begin{equation}
\label{D4.9}
P_{k,\nu}(\zeta) =
\begin{cases}
\OO{z^{\max\{0, 2\re{\nu}-1}}
       &\text{as } \im{z}=0, \, \zeta \downarrow 0 \text{ and}\\
\OO{z^{\min\{0, 2\re{\nu}-1}}
       & \text{as } \im{z}=0, \, \zeta \to \infty.
\end{cases}
\end{equation}
\end{theorem}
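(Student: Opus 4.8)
The plan is to view Theorem~\ref{D4.8} as a summing-up: the holomorphic continuation of $P_{k,\nu}$ to $\ICprime$ and the three-term equation on $\IR_{>0}$ are already in hand, so the only substantive task is the two growth estimates \eqref{D4.9}. For the continuation one notes that $R_{-k,\nu}(\cdot,\zeta)$ and $u$ are eigenfunctions of $\Delta_{-k}$ and $\Delta_{k}$ for the common eigenvalue $\tfrac14-\nu^{2}$, so $\eta_{-k}\big(R_{-k,\nu}(\cdot,\zeta),u\big)$ is closed by Lemma~\ref{D2.16} and the vertical contour in \eqref{D4.3} can be pushed past $\zeta$ and $\bar\zeta$, exactly as in the discussion preceding the theorem; the three-term equation on $\IR_{>0}$ is precisely Lemma~\ref{D4.6} (and, being an identity of functions holomorphic on $\ICprime$, it then holds on all of $\ICprime$). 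It therefore remains to bound \eqref{D4.3} for $\zeta\in(0,\infty)$.

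First I would restrict the integrand to the path of integration $z=it$, $t>0$. For real $\zeta$ the numbers $\zeta-it$ and $\zeta+it$ are complex conjugates, so the weight factor $\big(\sqrt{\zeta-it}/\sqrt{\zeta+it}\big)^{-m}$ has modulus $1$ for every $m\in\tfrac12\IZ$, whence $\abs{R_{m,\nu}(it,\zeta)}=\big(t/(t^{2}+\zeta^{2})\big)^{\frac12-\re{\nu}}$. Next I would expand $\eta_{-k}\big(R_{-k,\nu}(\cdot,\zeta),u\big)$ via Definition~\ref{D2.14} and the ladder relation \eqref{D2.6} — which gives $\EE^{+}_{-k}R_{-k,\nu}(\cdot,\zeta)=(1-2\nu-k)\,R_{2-k,\nu}(\cdot,\zeta)$ — so that the integrand along the imaginary axis is a fixed linear combination of $R_{2-k,\nu}(it,\zeta)\,u(it)$ and $R_{-k,\nu}(it,\zeta)\,(\EE^{-}_{k}u)(it)$ times $\dd t/t$. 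Here $u$ is a cusp form, and $\EE^{-}_{k}u$ is again a Maass cusp form of weight $k-2$ — it has the correct automorphy by \eqref{D1.7}, the same eigenvalue $\tfrac14-\nu^{2}$ by \eqref{D1.9}, and it again satisfies the growth condition of Definition~\ref{D1.1} — so both $u(it)$ and $(\EE^{-}_{k}u)(it)$ decay faster than any power of $t$ as $t\to\infty$, and, using $u\big|_{k}^{v}S=u$ to interchange the cusps $0$ and $i\infty$, also as $t\to0$. Writing $\Phi(t)$ for the resulting majorant — super-polynomially small at $0$ and at $\infty$ — and $C$ for an absolute constant, we reach
\[
\abs{P_{k,\nu}(\zeta)}\ \le\ C\int_{0}^{\infty}\Big(\frac{t}{t^{2}+\zeta^{2}}\Big)^{\frac12-\re{\nu}}\,\Phi(t)\,\frac{\dd t}{t}.
\]

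The last step is to read off the $\zeta$-asymptotics of this scalar integral. Substituting $t=\zeta s$ turns it into $\zeta^{\re{\nu}-\frac12}\int_{0}^{\infty}\big(s/(1+s^{2})\big)^{\frac12-\re{\nu}}\,\Phi(\zeta s)\,\dd s/s$. As $\zeta\to\infty$ the factor $\Phi(\zeta s)$ confines the bulk of the integral to $s$ of order $\zeta^{-1}$, where the kernel behaves like $s^{\frac12-\re{\nu}}$; undoing the substitution on that range releases a second factor $\zeta^{\re{\nu}-\frac12}$, producing $\OO{\zeta^{2\re{\nu}-1}}$, and together with the bound $\OO{1}$, available whenever the kernel stays bounded on the support of $\Phi$, this gives $\OO{\zeta^{\min\{0,\,2\re{\nu}-1\}}}$. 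Symmetrically, as $\zeta\to0$ the bulk sits at $s$ of order $\zeta^{-1}\to\infty$, where the kernel behaves like $s^{\re{\nu}-\frac12}$, the substitution returns a factor $\zeta^{\frac12-\re{\nu}}$, and one obtains $\OO{1}$; combined with the estimate $\OO{\zeta^{2\re{\nu}-1}}$ coming from $t/(t^{2}+\zeta^{2})\le t/\zeta^{2}$, this gives $\OO{\zeta^{\max\{0,\,2\re{\nu}-1\}}}$, which is \eqref{D4.9}.

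The main obstacle is the concentration argument in the last step. One has to split the integral as $\int_{0}^{1}+\int_{1}^{\infty}$ and, on each half, weigh the (possibly large) power of $s$ in $\big(s/(1+s^{2})\big)^{\frac12-\re{\nu}}$ against the super-polynomial decay of $\Phi(\zeta s)$ uniformly in $\zeta$, taking care that the error terms are of strictly lower order than the advertised exponent in each of the regimes $\re{\nu}<\tfrac12$ and $\re{\nu}\ge\tfrac12$; checking that $\EE^{-}_{k}u$ inherits rapid decay at both cusps is an easy side issue. Everything preceding that — the continuation, the three-term relation, and the reduction to the one-dimensional integral — is routine given the machinery already assembled.
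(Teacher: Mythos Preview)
Your proposal is correct, and up to and including the majorant
\[
\abs{P_{k,\nu}(\zeta)}\ \le\ C\int_{0}^{\infty}\Big(\frac{t}{t^{2}+\zeta^{2}}\Big)^{\frac12-\re{\nu}}\,\Phi(t)\,\frac{\dd t}{t}
\]
it coincides with the paper's argument essentially verbatim. The only difference is in how you extract the $\zeta$-asymptotics from this scalar integral.

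You introduce the substitution $t=\zeta s$ and then run a concentration/Laplace-type analysis, which you yourself flag as the main obstacle: it requires splitting the range, balancing powers of $s$ against the super-polynomial decay of $\Phi(\zeta s)$, and tracking uniformity in $\zeta$. The paper avoids all of this by observing that two crude pointwise bounds already suffice:
\[
\frac{t}{t^{2}+\zeta^{2}}\le t^{-1}
\qquad\text{and}\qquad
\frac{t}{t^{2}+\zeta^{2}}\le \zeta^{-2}\,t.
\]
Plugging the first into the majorant gives $\abs{P_{k,\nu}(\zeta)}\le C\int_{0}^{\infty}t^{\re{\nu}-\frac12}\,\Phi(t)\,\dd t/t$, a finite $\zeta$-independent constant (since $\Phi$ has rapid decay at both ends), hence $P_{k,\nu}(\zeta)=\OO{1}$. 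Plugging the second gives $\abs{P_{k,\nu}(\zeta)}\le C\,\zeta^{2\re{\nu}-1}\int_{0}^{\infty}t^{\frac12-\re{\nu}}\,\Phi(t)\,\dd t/t$, again a finite constant times $\zeta^{2\re{\nu}-1}$. Since both estimates hold for all $\zeta>0$, taking the better of the two at each end yields \eqref{D4.9} immediately, with no splitting or concentration argument needed. Your route works, but the paper's is a one-line shortcut that removes exactly the obstacle you identified.
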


\begin{proof}
The first part of the proposition follows from the discussion above.

The cusp form $u$ is bounded on $\IH$ since a cusp form vanishes at
all cusps $\IQ \cup i\infty$ and $u$ is real-analytic on $\IH$.
Also, $\EE^+_k u$ is bounded since the Maass operator maps cusp
forms of weight $k$ to cusp forms of weight $k+2$. Applying
successively \eqref{D4.3}, \eqref{D2.15}, \eqref{D2.8}, \eqref{D2.6}
and \eqref{D2.4} we find
\begin{align*}
&P_{k,\nu}(\zeta) \\
&=
\int_0^{i\infty} \bigg[ \left(\EE_{-k}^+ R_{-k,\nu}(\cdot,\zeta) \right)(z)\, u(z)  \, \frac{\dd z}{y}
\; - \;
R_{-k,\nu}(z,\zeta) \, \left(\EE_k^-u\right)(z)\, \frac{\dd \bar{z}}{y}  \bigg]\\
&=
\int_0^{i\infty} \bigg[ (1-2\nu-k)\, R_{2-k,\nu}(z,\zeta) \, u(z) \, \frac{\dd z}{y}
- R_{-k,\nu}(z,\zeta) \, \left(\EE_k^-u \right)(z)\, \frac{\dd \bar{z}}{y}  \bigg]\\
&=
i \int_0^\infty e^{ik\arg{\zeta-iy}} \, \left( \frac{y}{(\zeta-iy)(\zeta+iy)}\right)^{\frac{1}{2}-\nu} \\
& \qquad\qquad
\left[ (1-2\nu-k)\,e^{2i\arg{\zeta-iy}} u(iy)  -  \left(\EE_k^- u\right)(iy) \right] \frac{\dd y}{y}
\end{align*}
for $\zeta>0$.
Using the notation $f(z) \ll g(z)$ for $f(z) = \OO{g(z)}$, we find the estimate
\begin{align}
\label{D4.10} \abs{P_{k,\nu}(\zeta)} &\ll
\int_0^\infty  \left| \frac{y}{\zeta^2+y^2} \right|^{\frac{1}{2}-\re{\nu}}   \,\cdot\,\\
\nonumber & \qquad
  \max\bigg\{\abs{\left(\EE_k^-u\right)(iy)},  \abs{(1-2\nu-k)\,u(iy) } \bigg\} \,  \frac{\dd y}{y}.
\end{align}
for $\zeta>0$. The integral converges since $u$ and hence $u(iy)$
and $\big(E^-_k u\big)(iy)$ decay quickly as $ y \to \infty$ and as
$y \downarrow 0$.

Using the estimate
\[
\frac{y}{\zeta^2+y^2} \leq  \zeta^{-2} \,y
\]
in \eqref{D4.10} gives
\[
P_{k,\nu}(\zeta) = \OO{\zeta^{2\re{\nu}-1}}
\qquad \text{for every $\zeta>0$}.
\]
We have
\[
P_{k,\nu}(\zeta) = \OO{1}
\qquad \text{for every $\zeta >0$}
\]
if we use
\[
\frac{y}{\zeta^2+y^2} \leq y^{-1}
\]
in \eqref{D4.5}. This proves the stated growth condition.
\end{proof}

\subsection{Period Functions and Nearly Periodic Functions}
\label{D4.11} Let us start with a Maass cusp form $u$ of weight $k$,
multiplier $v$ and spectral value $\nu$ as in Definition~\ref{D1.1}.
We associated in \S\ref{D3} a nearly periodic function $f$ by the
integral transform~\eqref{D3.1}:
\[
\tag{\ref{D3.1}}
\begin{split}
\IC\smallsetminus \IR &\to \IC; \\
\zeta &\mapsto f(\zeta) :=
\begin{cases}
\displaystyle
  \int_\zeta^{i\infty} \eta_{-k}\big(R_{-k,\nu}(\cdot,\zeta),u\big) (z)
    &\text{ if $\zeta \in \IH$ and} \\
\displaystyle
 -\int_{\zeta}^{-i\infty} \eta_{k}\big(R_{-k,\nu}(\cdot,\zeta),\tilde{u} \big) (z)
    &\text{ if $\zeta \in \IH^-$.}
\end{cases}
\end{split}
\]
Then, we attached a period-function $P$ by
\eqref{D3.10}:
\[
\tag{\ref{D3.10}} P = f\big\|_{\nu}^v (\id - S) \qquad \qquad
\text{(on $\IH \cup \IH^-$)}
\]
which satisfies the three-term equation
\[
\tag{\ref{D3.16}} 0= P\big\|_{\nu}^v (\id - T-T^\prime) \qquad
\qquad \text{(on $\IH \cup \IH^-$)}.
\]

On the other hand, we have the integral transformation
\eqref{D4.3} from the Maass cusp form $u$ to the period function
$P_{k,\nu}$:
\[
\tag{\ref{D4.3}}
P_{k,\nu}(\zeta) = \int_0^{i\infty} \eta_{-k}\big(R_{-k,\nu}(\cdot,\zeta),u\big) (z)
\qquad \text{(on $\IR_{>0}$)}
\]
which satisfies the three-term equation
\[
\tag{\ref{D4.7}} 0= P\big\|_{\nu}^v (\id - T-T^\prime) \qquad
\qquad \text{(on $\IR_{>0}$)}
\]
and extends to $\ICprime$ (Theorem~\ref{D4.8}).

Are both directions compatible? In other words, do we get the same
function $P$ on $\IH \cup \IH^-$, regardless of using the
intermediate periodic function via \eqref{D3.1} and \eqref{D3.10} of
taking the formula \eqref{D4.3}?

\begin{lemma}
\label{D4.12} Let $k, v, \nu$ and $u$ be as in Definition~\ref{D4.2}
with $\abs{\re{\nu}} < \frac{1}{2}$. The maps
\[
u \stackrel{\eqref{D3.1}}{\longmapsto} f \stackrel{\eqref{D3.10}}{\longmapsto} P
\quad \text{and} \quad
u \stackrel{\eqref{D4.3}}{\longmapsto} P_{k,\nu}
\]
give rise to the same function $P=P_{k,\nu}$ on $\big\{\zeta \in \IC; \; \re{\zeta} >0, \, \im{\zeta} \neq 0 \big\}$.
\end{lemma}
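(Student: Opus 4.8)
The plan is to unwind both constructions into integrals of a Maass--Selberg form over concrete paths and to match them using that the relevant form is closed. Throughout I fix $\zeta$ with $\re{\zeta}>0$ and $\im{\zeta}\neq 0$, treat the case $\im{\zeta}>0$ first, and obtain the case $\im{\zeta}<0$ by the same scheme.

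By \eqref{D3.10}, $P=f\big\|_{\nu}^v(\id-S)$, that is $P(\zeta)=f(\zeta)-v(S)^{-1}\zeta^{2\nu-1}f(S\zeta)$, where I used $\mu(S,\zeta)=\zeta$. I would substitute the defining integral \eqref{D3.1} for $f(\zeta)$, and for the second term apply Lemma~\ref{D3.3} with $\gamma=S$; this is legitimate because $\re{\mu(S,\zeta)}=\re{\zeta}>0$, and since $S^{-1}(i\infty)=0$ it gives $v(S)^{-1}\zeta^{2\nu-1}f(S\zeta)=\int_{\zeta}^{0}\eta_{-k}\big(R_{-k,\nu}(\cdot,\zeta),u\big)(z)$ along the geodesic ray from $\zeta$ to $0$. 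Subtracting, the two integrals share the endpoint $\zeta$ and combine into
\[
P(\zeta)=\Big(\int_{0}^{\zeta}+\int_{\zeta}^{i\infty}\Big)\eta_{-k}\big(R_{-k,\nu}(\cdot,\zeta),u\big)(z),
\]
the integral of that form over the broken geodesic path $0\to\zeta\to i\infty$.

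The heart of the argument is then to identify this with the integral of the same form along the imaginary axis. By Proposition~\ref{D2.21} the function $z\mapsto R_{-k,\nu}(z,\zeta)$ is an eigenfunction of $\Delta_{-k}$ on $\IH$ with eigenvalue $\frac{1}{4}-\nu^2$, and $u$ is an eigenfunction of $\Delta_k$ with the same eigenvalue, so by Lemma~\ref{D2.16} the form $\eta_{-k}\big(R_{-k,\nu}(\cdot,\zeta),u\big)$ is closed on the subset of $\IH$ on which $R_{-k,\nu}(\cdot,\zeta)$ is regular. That subset is $\IH$ with the horizontal ray $\zeta+\IR_{\geq 0}$ removed (there condition \eqref{D2.2} fails for $z\in\IH$); it is simply connected, and since $\re{\zeta}>0$ it contains both the imaginary axis and the broken path above, which meets $\zeta+\IR_{\geq 0}$ only at its tip $\zeta$, where for $\abs{\re{\nu}}<\frac{1}{2}$ the integrand has only an integrable singularity (Lemma~\ref{D2.37}). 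Deforming the broken path to the imaginary axis inside this region — the rapid decay of $u$ at the cusps $0$ and $i\infty$ (Definition~\ref{D1.1}) against the at most polynomial growth of $R_{-k,\nu}$ controls the two tails, and the mild singularity at $\zeta$ the corner — produces $P(\zeta)=\int_0^{i\infty}\eta_{-k}\big(R_{-k,\nu}(\cdot,\zeta),u\big)(z)$ over the vertical path. For $\re{\zeta}>0$ the imaginary axis already lies to the left of both $\zeta$ and $\bar\zeta$, hence is an admissible contour for the integral \eqref{D4.3} extending $P_{k,\nu}$ to $\ICprime$ as described before Theorem~\ref{D4.8}; the right--hand side is thus $P_{k,\nu}(\zeta)$, so $P(\zeta)=P_{k,\nu}(\zeta)$.

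For $\im{\zeta}<0$ I would run the same three steps after the substitutions: use Lemma~\ref{D3.25}, $f(\zeta)=-\int_{\bar\zeta}^{i\infty}\eta_{k}\big(u,R_{-k,\nu}(\cdot,\zeta)\big)(z)$, in place of \eqref{D3.1}; use the $\IH^-$--branch of Lemma~\ref{D3.3} for the $S$--term (again $\re{\mu(S,\zeta)}>0$); carry out the deformation for the ray $\bar\zeta+\IR_{\geq 0}$ and the form $\eta_k\big(u,R_{-k,\nu}(\cdot,\zeta)\big)$, closed by the same eigenfunction argument; and finally invoke $\int_0^{i\infty}\eta_k\big(u,R_{-k,\nu}(\cdot,\zeta)\big)(z)=P_{k,\nu}(\zeta)$ from Remark~\ref{D4.14}, where this integrand is shown to differ from the one in \eqref{D4.3} by $\dd\!\big(u\cdot R_{-k,\nu}(\cdot,\zeta)\big)$, whose integral over the imaginary axis vanishes by cuspidality. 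I expect the path--deformation step to be the main obstacle: one must check that the broken geodesic $0\to\zeta\to i\infty$ (respectively $0\to\bar\zeta\to i\infty$) is homotopic, relative to its boundary endpoints, to the imaginary axis inside $\IH\smallsetminus(\zeta+\IR_{\geq 0})$ (respectively $\IH\smallsetminus(\bar\zeta+\IR_{\geq 0})$), and that all the improper integrals — at the two cusps and at the corner $\zeta$ (respectively $\bar\zeta$) — vary continuously along the homotopy. The remaining point requiring attention is the bookkeeping of weights, multipliers and signs carried by the double--slash notation and, in the lower--half--plane case, by Lemma~\ref{D2.35}; once these match, the identification is just a rewriting of definitions.
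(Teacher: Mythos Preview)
Your proposal is correct and follows essentially the same approach as the paper's proof: both express $P(\zeta)=f(\zeta)-f\big\|_{\nu}^v S(\zeta)$ via \eqref{D3.1} and Lemma~\ref{D3.3} as an integral of the Maass--Selberg form over the broken geodesic $0\to\zeta\to i\infty$ (resp.\ $0\to\bar\zeta\to i\infty$) and identify this with the defining integral \eqref{D4.3} along the imaginary axis. The only difference is presentational: you make the contour deformation and its justification (closedness, integrable singularity at $\zeta$, cuspidal decay at $0$ and $i\infty$) explicit, whereas the paper simply splits $\int_0^{i\infty}=\int_0^{\zeta}+\int_{\zeta}^{i\infty}$ and invokes Lemma~\ref{D3.3}, leaving the path independence tacit.
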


\begin{proof}
For $\zeta \in \IH$ with $\re{\zeta}>0$ in the upper half-plane, we find
\begin{align*}
P_{k,\nu}(\zeta)
&=
\int_0^{i\infty} \eta_{-k}\big(R_{-k,\nu}(\cdot, \zeta),u\big) (z)  \\
&=
\int_\zeta^{i\infty} \eta_{-k}\big(R_{-k,\nu}(\cdot, \zeta),u\big) (z) +
\int_0^\zeta \eta_{-k}\big(R_{-k,\nu}(\cdot,\zeta),u\big) (z) \\
&=
\int_\zeta^{i\infty} \eta_{-k}\big(R_{-k,\nu}(\cdot, \zeta),u\big) (z) -
\int_\zeta^{S^{-1}i\infty} \eta_{-k}\big(R_{-k,\nu}(\cdot, \zeta),u\big) (z) \\
&=
f(\zeta) - v(S)^{-1} \, \, \zeta^{2\nu-1} \, f(S \, \zeta)
\qquad \text{using Lemma~\ref{D3.3}} \\
%&= f \big\|^v_{\nu} \big(\id-S\big) (\zeta) \\
&= P(\zeta).
\end{align*}
A similar calculation holds for $\zeta \in \IH^-$ with $\re{\zeta}>0$:
\begin{align*}
P_{k,\nu}(\zeta)
&=
-\int_0^{i\infty} \eta_{k}\big(u, R_{-k,\nu}(\cdot, \zeta)\big) (z)
\qquad \text{using Lemma~\ref{D2.17}}\\
&=
-\int_{\overline{\zeta}}^{i\infty} \eta_{k}\big(u, R_{-k,\nu}(\cdot, \zeta)\big) (z)
-\int_0^{\overline{\zeta}} \eta_{k}\big(u, R_{-k,\nu}(\cdot,\zeta) \big) (z) \\
&=
f(\zeta) - v(S)^{-1} \, \zeta^{2\nu-1} \, f(S \, \zeta)
\qquad \text{using Lemma~\ref{D3.3}} \\
&= P(\zeta).
\end{align*}
\end{proof}

\begin{theorem}
\label{D4.13}
The function $P$ given by \eqref{D3.10} on $\IH \cup \IH^-$ is holomorphic, extends holomorphically to the
cut-plane $\ICprime = \IC \setminus (-\infty,0]$, satisfies the three-term-equation $0= P\big\|_{\nu}^v (\id - T-T^\prime)$ on $\ICprime$, and satisfies the growth condition~\eqref{D4.9}.
\end{theorem}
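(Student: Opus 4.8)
The plan is to deduce every assertion from the properties of the integral transform $P_{k,\nu}$ of \S\ref{D4.1} (Theorem~\ref{D4.8}) together with the coincidence $P=P_{k,\nu}$ furnished by Lemma~\ref{D4.12}, using the identity theorem on the connected cut plane $\ICprime$.

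For holomorphy on $\IH\cup\IH^-$, I would first check that the function $f$ of \eqref{D3.1} is holomorphic on $\IH$ and on $\IH^-$. For fixed $z$ on the geodesic, the integrand $\eta_{-k}\big(R_{-k,\nu}(\cdot,\zeta),u\big)(z)$ is holomorphic in $\zeta$: the map $\zeta\mapsto R_{m,\nu}(z,\zeta)$ is holomorphic for every $m$ (Proposition~\ref{D2.21}(2)), the Maass-operator images entering $\eta_{-k}$ are again such $R$-functions by \eqref{D2.6}, and $u$, $\EE^-_k u$ do not depend on $\zeta$. Since the Maass--Selberg form is closed (Lemma~\ref{D2.16}(2)), the geodesic contour may be kept away from $\zeta$ except on an arbitrarily short initial arc, on which the integrable singularity recorded in Lemma~\ref{D2.37} contributes nothing to $\partial_{\bar\zeta}f$ in the limit (here $\abs{\re{\nu}}<\frac{1}{2}$ is used); hence $\partial_{\bar\zeta}f=0$. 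Then $P=f\big\|_{\nu}^v(\id-S)$, that is $P(\zeta)=f(\zeta)-v(S)^{-1}\zeta^{2\nu-1}f(-1/\zeta)$, is holomorphic on each of $\IH$ and $\IH^-$, because $\zeta\mapsto\zeta^{2\nu-1}$ is holomorphic there (both half-planes avoid $(-\infty,0]$) and $\zeta\mapsto-1/\zeta$ maps $\IH$ into $\IH$ and $\IH^-$ into $\IH^-$.

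For the extension and the remaining properties, I would argue as follows. By Lemma~\ref{D4.12}, $P=P_{k,\nu}$ on $\{\zeta\in\IC;\ \re{\zeta}>0,\ \im{\zeta}\neq0\}$, which is a nonempty open subset of each of the connected sets $\IH$ and $\IH^-$, while $P_{k,\nu}$ is holomorphic on $\ICprime\supset\IH\cup\IH^-$ by Theorem~\ref{D4.8}; the identity theorem then forces $P=P_{k,\nu}$ on all of $\IH$ and on all of $\IH^-$, so $P_{k,\nu}$ is the sought holomorphic extension of $P$ to $\ICprime$, and I henceforth write $P$ for it. For the three-term equation, note that $T$ and $T^\prime$ lie in $\SL{\IZ}^+$ and hence map $\ICprime$ into itself, and that $\zeta\mapsto(\zeta+1)^{2\nu-1}$ is holomorphic on $\ICprime$ (since $\zeta+1\in\ICprime$ whenever $\zeta\in\ICprime$); therefore $D:=P\big\|_{\nu}^v(\id-T-T^\prime)$ is holomorphic on $\ICprime$. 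By Lemma~\ref{D4.6} (via $P=P_{k,\nu}$) it vanishes on $\IR_{>0}$, a set with accumulation points in the connected open set $\ICprime$, so $D\equiv0$ on $\ICprime$ by the identity theorem. Finally, the growth condition~\eqref{D4.9} holds for $P_{k,\nu}$ by Theorem~\ref{D4.8}, hence for $P$.

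The step I expect to be the main obstacle is the holomorphy of $f$ (equivalently of $P$) on $\IH\cup\IH^-$: it requires showing that the $\bar\zeta$-derivative of the integral \eqref{D3.1} picks up no boundary term at the moving endpoint $\zeta$ in spite of the singularity of the integrand there, for which the closedness of the Maass--Selberg form and the bound $\re{\nu}>-\frac{1}{2}$ are essential. Once this is secured, the rest is a routine combination of Lemmas~\ref{D4.12} and~\ref{D4.6}, Theorem~\ref{D4.8}, and two applications of the identity theorem on $\ICprime$.
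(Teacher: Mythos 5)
Your proposal is correct and follows essentially the same route as the paper: identify $P$ with $P_{k,\nu}$ via Lemma~\ref{D4.12} and then import the extension to $\ICprime$ and the growth bound from Theorem~\ref{D4.8}. The paper's own proof consists of just those two citations, so your additional steps --- the direct verification that $f$ (hence $P$) is holomorphic on $\IH\cup\IH^-$, and the two identity-theorem arguments propagating $P=P_{k,\nu}$ beyond $\re{\zeta}>0$ and upgrading the three-term equation from $\IR_{>0}$ (Lemma~\ref{D4.6}) to all of $\ICprime$ --- simply make explicit what the paper leaves implicit.
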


\begin{proof}
Lemma~\ref{D4.12} shows that $P$ agrees on $\big\{\zeta \in \IC; \; \re{\zeta} >0, \, \im{\zeta} \neq 0 \big\}$ with
$P_{k,\nu}$ given by \eqref{D4.3}. The latter extends
holomorphically to $\ICprime$ and satisfies the growth
condition~\eqref{D4.9} by Proposition~\ref{D4.8}.
\end{proof}

\section{Proof of Theorem~\ref{A.1}}
\label{G}
Our main theorem is basically proven in \S\ref{D3} and \S\ref{D4}.
We just have to collect all parts.

The map $u \mapsto f$ from Maass cusp forms to nearly periodic functions is defined in \eqref{D3.1}.
That $f$ is nearly periodic is shown in Lemma~\ref{D3.5} and Theorem~\ref{D4.13} shows the remaining part.

The bijection $f \leftrightarrow P$ is due to Lemma~\ref{D3.7}.

The map $u \mapsto P_{k,\nu}$ from Maass cusp forms to period functions is given in \eqref{D4.3}.
The properties of $P_{k,\nu}$ are described in Theorem~\ref{D4.8}.

Lemma~\ref{D4.12}, cumulating in Theorem~\ref{D4.13}, shows that the period function $P$ obtained via $u \stackrel{\eqref{D3.1}}{\mapsto} f \stackrel{\text{L}~\ref{D3.7}}{\mapsto} P$ and via $u \stackrel{\eqref{D4.2}}{\mapsto} P_{k,\nu}$ are the same.

This concludes the proof of Theorem~\ref{A.1}.

\section{Period Functions and Period Polynomials}
\label{E}
In the following section we compare the integral transformation \eqref{D4.3} and the classical Eichler
integral in \eqref{C2.1} for holomorphic cusp forms.

\smallskip

Let $u_\text{h}$ be a modular cusp form of weight $k \in 2\IN$ as defined in the introduction.
We attach a Maass cusp form $u:\IH \to \IC$ to $u_\text{h}$ by
\begin{equation}
\label{E.1} u(z):= \im{z}^\frac{k}{2} u_\text{h}(z).
\end{equation}
As shown in \cite[\S3.2]{MR}, $u$ is indeed a Maass cusp form
of weight $k$, trivial multiplier $v \equiv 1$ and eigenvalue
$\frac{k}{2} \left(1-\frac{k}{2}\right)$. Hence, $u$ has spectral
values $\nu \in \left\{ \frac{k-1}{2},\frac{1-k}{2}\right\}$.

The following proposition compares the period functions attached to $u$ and the period polynomial attached to $u_h$.
It is based on \cite[Proposition~49]{Mu03}.
\begin{proposition}
\label{E.2} Let $u$ be the Maass cusp form in \eqref{E.1}, which is derived
from a modular cusp form $u_\text{h}$ of weight $k \in 2\IN$.
\begin{enumerate}
\item
The function $P_{k,\frac{1-k}{2}}$ associated to $u$ by \eqref{D4.3}
vanishes everywhere.
\item
The function $P_{k,\frac{k-1}{2}}$ associated to $u$ by
\eqref{D4.3} restricted to the right half-plane $\{\zeta\in \IC;\; \re{\zeta} > 0\}$
is a multiple of the period polynomial $p$ associated to
$u_\text{h}$ by \eqref{C2.1}: $P_{k,\frac{k-1}{2}} = (2-2k) \, p$.
\end{enumerate}
\end{proposition}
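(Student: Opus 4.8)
The plan is to read off $P_{k,\nu}$ from the integral representation obtained in the proof of Theorem~\ref{D4.8}. Expanding $\eta_{-k}\big(R_{-k,\nu}(\cdot,\zeta),u\big)$ via the definition~\eqref{D2.15} of the Maass--Selberg form, the forms~\eqref{D2.8}, and the raising relation~\eqref{D2.6} for the $R$-function, one gets for $\zeta\in(0,\infty)$
\[
P_{k,\nu}(\zeta)
=(1-2\nu-k)\int_0^{i\infty}R_{2-k,\nu}(z,\zeta)\,u(z)\,\frac{\dd z}{y}
\;-\;\int_0^{i\infty}R_{-k,\nu}(z,\zeta)\,\big(\EE^-_k u\big)(z)\,\frac{\dd\bar z}{y}.
\]
The first input is that the Maass form coming from a holomorphic form is annihilated by the lowering operator: since $u(z)=\im{z}^{k/2}u_\text{h}(z)$ with $u_\text{h}$ holomorphic, writing $\EE^-_k=-4iy\partial_{\bar z}-k$ as in~\eqref{D1.5} and using $\partial_{\bar z}\!\big(y^{k/2}\big)=\tfrac{ik}{4}y^{k/2-1}$ gives $\EE^-_k u=0$. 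Hence the second integral drops out and
\[
P_{k,\nu}(\zeta)=(1-2\nu-k)\int_0^{i\infty}R_{2-k,\nu}(z,\zeta)\,u(z)\,\frac{\dd z}{y}\qquad(\zeta\in(0,\infty)).
\]

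For part~(1) we specialise $\nu=\tfrac{1-k}{2}$, for which the scalar $1-2\nu-k$ equals $0$; thus $P_{k,\frac{1-k}{2}}$ vanishes on $(0,\infty)$ and, being holomorphic on $\ICprime$ by Theorem~\ref{D4.8}, vanishes identically on $\ICprime$.

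For part~(2) we specialise $\nu=\tfrac{k-1}{2}$, so the scalar is $1-2\nu-k=2-2k$, and it remains to match the integrand with the classical Eichler kernel. Now $\tfrac12-\nu=1-\tfrac{k}{2}$, and for real $\zeta>0$ and $z\in\IH$ we have $\zeta-\bar z=\overline{\zeta-z}$, hence $(\zeta-z)(\zeta-\bar z)=\abs{\zeta-z}^2\in\IR_{>0}$. Inserting this into the form~\eqref{D2.4} of the $R$-function and using $k\in2\IN$ (so that $k-2\in\IZ$ and no branch ambiguity arises),
\[
R_{2-k,\frac{k-1}{2}}(z,\zeta)
=e^{i(k-2)\arg{\zeta-z}}\left(\frac{y}{\abs{\zeta-z}^2}\right)^{1-\frac{k}{2}}
=y^{1-\frac{k}{2}}\,\abs{\zeta-z}^{k-2}\,e^{i(k-2)\arg{\zeta-z}}
=y^{1-\frac{k}{2}}\,(\zeta-z)^{k-2},
\]
whence $R_{2-k,\frac{k-1}{2}}(z,\zeta)\,u(z)\,\tfrac{\dd z}{y}=(\zeta-z)^{k-2}u_\text{h}(z)\,\dd z$ and, by~\eqref{C2.1},
\[
P_{k,\frac{k-1}{2}}(\zeta)=(2-2k)\int_0^{i\infty}(\zeta-z)^{k-2}u_\text{h}(z)\,\dd z=(2-2k)\,p(\zeta)\qquad(\zeta\in(0,\infty)).
\]
Since $P_{k,\frac{k-1}{2}}$ is holomorphic on the right half-plane (by the discussion preceding Theorem~\ref{D4.8}) and $p$ is a polynomial, this identity propagates to all of $\{\re{\zeta}>0\}$ by the identity theorem, which is the claim.

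I expect the only genuinely delicate point to be the displayed computation of $R_{2-k,\frac{k-1}{2}}(z,\zeta)$: one must verify that the branch factor $e^{-ik\arg{\zeta-z}}$ and the principal powers in the definition~\eqref{D2.5} of the $R$-function collapse to the honest polynomial kernel $(\zeta-z)^{k-2}$, which works precisely because $\zeta$ is real (making $(\zeta-z)(\zeta-\bar z)$ a positive real) and $k$ is an even integer. The remaining ingredients — the vanishing $\EE^-_k u=0$ and the invocation of Theorem~\ref{D4.8} — are routine.
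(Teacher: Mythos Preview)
Your proof is correct and follows essentially the same route as the paper: expand $\eta_{-k}$ via \eqref{D2.15}, \eqref{D2.8}, \eqref{D2.6}, kill the second term using $\EE^-_k u=0$, then specialise $\nu$ and simplify the $R$-function to the classical kernel. The only cosmetic differences are that you verify $\EE^-_k u=0$ by hand (the paper cites \cite{MR}) and that you compute $R_{2-k,\frac{k-1}{2}}$ via the form~\eqref{D2.4} for real $\zeta$ rather than directly from~\eqref{D2.5}; your explicit appeal to the identity theorem for the extension to $\re{\zeta}>0$ is a nice touch that the paper leaves implicit.
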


\begin{proof}
Since $$\left(\frac{k-1}{2}\right)\left(\frac{1-k}{2} \right) =
\frac{k}{2}\left(1-\frac{k}{2}\right),$$ we see that $\frac{k-1}{2}$
and $\frac{1-k}{2}$ are spectral values of $u$. Moreover, $\EE^-_k
u=0$ as shown in \cite[\S3.2]{MR}.

Let $P_{k,\nu}$ be the period function associated to $u$ via \eqref{D4.3}:
\[
P_{k,\nu} (\zeta)
=
\int_0^{i\infty} \eta_{-k}(R_{-k,\nu}(\cdot,\zeta),\, u)(z).
\]
Using \eqref{D2.15} and \eqref{D2.8} we then get
\[
P_{k,\nu} (\zeta) =   \int_0^{i\infty} \left[
\left( E^+_{-k} R_{-k,\nu}(\cdot,\zeta) \right)(z) \, u(z) \frac{\dd z}{y}
 -
R_{-k,\nu}(z,\zeta)\, \left( E^-_k u  \right)(z)  \frac{\dd \bar{z}}{y}
\right].
\]
Recalling that $\EE^+_{-k} R_{-k,\nu} = (1-2\nu-k) R_{2-k,\nu}$ in
\eqref{D2.6} and $\EE^-_k u = 0$ above, we find
\begin{equation}
\label{E.3} P_{k,\nu} (\zeta) = (1-2\nu-k) \int_0^{i\infty}
R_{2-k,\nu}(z,\zeta) \, u(z) \frac{\dd z}{y}
\end{equation}
for $\re{\zeta} >0$.

To prove the first part of the Proposition, we assume $\nu
=\frac{1-k}{2}$. Then, the factor $1-2\nu-k$ in \eqref{E.3}
vanishes, implying $P_{k,\frac{1-k}{2}} =0$.

To prove the second part of the Proposition, we assume
$\nu =\frac{k-1}{2}$. By \eqref{D2.5} we have
\begin{equation}
\label{E.4}
\begin{split}
R_{2-k,\frac{k-1}{2}}(z,\zeta)
&=
\left(\frac{\sqrt{\zeta - z\,}}{\sqrt{\zeta - \bar{z}\,}}\right)^{k-2}
\left( \frac{\abs{\im{z}}}{(\zeta-z)(\zeta-\bar{z})}\right)^\frac{2-k}{2} \\
&=
(\zeta-z)^{k-2} \, \abs{\im{z}}^\frac{2-k}{2}
\end{split}
\end{equation}
for every $\zeta$ and $z$ with $\zeta-z, \zeta - \bar{z} \neq \IR_{\leq 0}$.
Combining this with \eqref{E.1} in \eqref{E.3} gives
\[
P_{k,\frac{k-1}{2}} (\zeta) =
(2-2k)  \int_0^{i\infty}  (\zeta-z)^{k-2} \, u_\text{h}(z) \dd z \\
= (2-2k) \, p(\zeta)
\]
for at least all $\zeta$ in the right half-plane.
\end{proof}

Can we also recover the periodic function $f_h$?
The answer is given in the following proposition.

\begin{proposition}
\label{E.5} Let $u$ be the Maass cusp form in \eqref{E.1}, which is derived
from a modular cusp form $u_\text{h}$ of weight $k \in 2\IN$.
\begin{enumerate}
\item
The integral transformation \eqref{D3.1} defining $f(\zeta)$ for $\zeta \in \IH$ is well-defined for both spectral
values $\nu \in \left\{ \frac{1-k}{2}, \frac{k-1}{2} \right\}$.
\item
The function $f$ associated to $u$ by \eqref{D3.1} with weight $k$ and spectral value $\nu=\frac{1-k}{2}$ vanishes
everywhere.
\item
The function $f$ associated to $u$ by \eqref{D3.1} with weight $k$ and spectral value $\nu=\frac{k-1}{2}$ and restricted
to the upper half-plane $\IH$
is a multiple of $f_\text{h}$ associated to $u_\text{h}$ by \eqref{C3.1}: $f = (2-2k) \, f_\text{h}$.
\end{enumerate}
\end{proposition}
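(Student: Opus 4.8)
The plan is to mirror the structure of the proof of Proposition~\ref{E.2}, but now working with the integral representation~\eqref{D3.26} (or \eqref{D3.1}) on $\IH$ rather than the vertical integral~\eqref{D4.3}. First I would observe that, since the Maass cusp form $u$ in~\eqref{E.1} satisfies $\EE^-_k u = 0$ (by \cite[\S3.2]{MR}), one has the expansion
\begin{align*}
\eta_{-k}\big(R_{-k,\nu}(\cdot,\zeta),u\big)(z)
&=
\big(\EE^+_{-k} R_{-k,\nu}(\cdot,\zeta)\big)(z)\, u(z)\, \frac{\dd z}{y}
- R_{-k,\nu}(z,\zeta)\,\big(\EE^-_k u\big)(z)\,\frac{\dd\bar z}{y}\\
&=
(1-2\nu-k)\, R_{2-k,\nu}(z,\zeta)\, u(z)\, \frac{\dd z}{y},
\end{align*}
using~\eqref{D2.15}, \eqref{D2.8} and \eqref{D2.6}. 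This reduces the $1$-form in~\eqref{D3.1} to a \emph{holomorphic} $1$-form of the shape $(1-2\nu-k)\,R_{2-k,\nu}(z,\zeta)\,u(z)\,\dd z/y$.

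For part~(1), the well-definedness of~\eqref{D3.1} on $\IH$ for the two spectral values $\nu\in\{\tfrac{1-k}{2},\tfrac{k-1}{2}\}$ follows from Lemma~\ref{D2.37} together with the explicit form of the integrand: the only potential issue is the singularity as $z\to\zeta$, which is of order $(\zeta-z)^{\nu-\frac12}$, i.e.\ $(\zeta-z)^{-k/2}$ or $(\zeta-z)^{k/2-1}$; both are integrable against the smooth factor $u(z)$ along the geodesic ray since $k\in 2\IN$ makes these exponents integers $\geq -k/2 > -1$ only in borderline fashion — here I would note that for $\nu=\tfrac{1-k}{2}$ the whole integrand vanishes identically (see part~(2)), and for $\nu=\tfrac{k-1}{2}$ one uses the simplification~\eqref{E.4} to see $R_{2-k,(k-1)/2}(z,\zeta)=(\zeta-z)^{k-2}\,\abs{\im z}^{(2-k)/2}$, which is smooth on the ray from $\zeta$ to $i\infty$, and $u$ decays rapidly at $i\infty$. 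Thus convergence is immediate; this is the easy part.

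For part~(2), setting $\nu=\tfrac{1-k}{2}$ makes the scalar factor $1-2\nu-k=0$, so by the displayed reduction the integrand in~\eqref{D3.1} is identically zero, hence $f\equiv 0$ on $\IH$ (and on $\IH^-$ by the analogous representation~\eqref{D3.26}, or by Lemma~\ref{D1.8}). For part~(3), setting $\nu=\tfrac{k-1}{2}$ gives $1-2\nu-k=2-2k$, and substituting~\eqref{E.4} and~\eqref{E.1} into the reduced integrand yields, for $\zeta\in\IH$,
\begin{align*}
f(\zeta)
&=
\int_\zeta^{i\infty}(2-2k)\,R_{2-k,\frac{k-1}{2}}(z,\zeta)\,u(z)\,\frac{\dd z}{y}\\
&=
(2-2k)\int_\zeta^{i\infty}(\zeta-z)^{k-2}\,\abs{\im z}^{\frac{2-k}{2}}\,\abs{\im z}^{\frac{k}{2}}\,u_\text{h}(z)\,\frac{\dd z}{\im z}
=
(2-2k)\int_\zeta^{i\infty}(\zeta-z)^{k-2}\,u_\text{h}(z)\,\dd z,
\end{align*}
since $\abs{\im z}^{(2-k)/2}\,\abs{\im z}^{k/2}/\im z = 1$ for $z\in\IH$. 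The right-hand side is exactly $(2-2k)\,f_\text{h}(\zeta)$ by~\eqref{C3.1}.

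The main obstacle I anticipate is not any deep difficulty but a bookkeeping subtlety: one must check that the branch of $(\zeta-z)^{k-2}$ coming from the principal square roots in $R_{2-k,(k-1)/2}$ (see Remark~\ref{D2.1} and identity~\eqref{D2.31}) agrees, along the geodesic ray from $\zeta\in\IH$ to $i\infty$, with the branch implicitly used in the classical Eichler integral~\eqref{C3.1} — for $\zeta\in\IH$ and $z$ on that ray one has $\zeta-z,\,\zeta-\bar z\notin\IR_{\leq0}$ provided $\zeta$ has positive imaginary part, so the principal branch is the natural one and $(\zeta-z)^{k-2}$ is single-valued there; since $k-2\in 2\IN_0\cup\{-\}$ is in fact an even integer when $k\in2\IN$, the power is entirely branch-independent, so this subtlety actually dissolves. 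Likewise one should confirm that the reduction $\EE^-_k u=0$ is legitimately inherited from \cite[\S3.2]{MR} and that~\eqref{D2.6} applies with $\zeta\in\IH$ rather than merely real $\zeta$, which is covered by Proposition~\ref{D2.21}(4) under~\eqref{D2.2}. With these verified, the three parts follow as above.
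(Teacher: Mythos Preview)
Your proposal is correct and follows essentially the same route as the paper: expand $\eta_{-k}$ via \eqref{D2.15}, \eqref{D2.8}, use $\EE^-_k u=0$ and \eqref{D2.6} to reduce the integrand to $(1-2\nu-k)\,R_{2-k,\nu}(z,\zeta)\,u(z)\,\dd z/y$, then read off parts~(2) and~(3) from the vanishing of the scalar factor and from \eqref{E.4}, \eqref{E.1} respectively. Your treatment of part~(1) is in fact more explicit than the paper's, which simply observes that since the formal computations produce manifestly convergent integrals (zero, respectively the classical Eichler integral), the original transform is well defined; your branch and singularity discussion is a welcome elaboration, though note that the phrase ``$-k/2>-1$ only in borderline fashion'' is not quite right for $k\in 2\IN$ --- you correctly salvage this by invoking the identical vanishing of the integrand in that case.
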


\begin{proof}
We follow the arguments of the proof of Proposition~\ref{E.2}.
For $\zeta \in \IH$ is the nearly periodic function $f$ associated to $u$ given by \eqref{D3.1}.
Using \eqref{D2.15} and \eqref{D2.8} we then get
\[
f (\zeta) =   \int_\zeta^{i\infty} \left[
\left( E^+_{-k} R_{-k,\nu}(\cdot,\zeta) \right)(z) \, u(z) \frac{\dd z}{y}
 -
R_{-k,\nu}(z,\zeta)\, \left( E^-_k u  \right)(z)  \frac{\dd \bar{z}}{y}
\right].
\]
Recalling $\EE^+_{-k} R_{-k,\nu} = (1-2\nu-k) R_{2-k,\nu}$ in \eqref{D2.6} and $\EE^-_k u = 0$,
we find
\begin{equation}
\label{E.6} f(\zeta) = (1-2\nu-k) \int_\zeta^{i\infty}
R_{2-k,\nu}(z,\zeta) \, u(z) \frac{\dd z}{y}
\end{equation}
for $\zeta \in \IH$.

To prove the second part of the Proposition, we assume $\nu
=\frac{1-k}{2}$. Then, the factor $1-2\nu-k$ in \eqref{E.6}
vanishes, implying $P_{k,\frac{1-k}{2}} =0$.

To prove the third part of the Proposition, we assume
$\nu =\frac{k-1}{2}$.
Using \eqref{E.4} and \eqref{E.1} in \eqref{E.6} gives
\[
f (\zeta) =
(2-2k)  \int_\zeta^{i\infty}  (\zeta-z)^{k-2} \, u_\text{h}(z) \dd z \\
= (2-2k) \, f_\text{h}(\zeta)
\]
for every $\zeta \in \IH$.

The first part follows also from the above calculations. Even if the
calculations above are a priori formal, the well-definiteness of the
results show that the original integral transforms are also well
defined. We are just adding cleverly zeros.
\end{proof}

\section{Discussion and Outlook}
\label{F}
In this paper, we introduced and discussed Eichler integrals attached to Maass cusp forms of half-integral weight.
We also introduced the corresponding period functions.
This generalizes on one hand the classical case of period polynomials and periodic functions associated to
holomorphic modular cusp forms, as shown in \S\ref{E}.
On the other hand, our results fit neatly with the also known case of Maass cusp forms of weight $0$ and associated
periodic and period functions, discussed in \cite{LZ01}.

Obvious remaining questions are
\begin{enumerate}
\item
For half-integral weight, do the period functions (i.e., the space of holomorphic solutions of the three-term
equation~\eqref{D3.16} which satisfy the growth condition~\eqref{D4.9}) bijectively
correspond to Maass cusp forms?
We only show one direction.
\item
Can we use the introduced period functions to describe a ``Eichler-Shimura-cohomology''
for the half-integral or real weight case?
\item
Does everything also hold for real or complex weights and/or non-cuspidal forms?
For example, can we extend the results to the general Maass wave forms introduced in \cite{MR}?
\item
What can we say about Eichler-Shimura theory of harmonic Maass wave forms?
\item
How are period functions and $L$-series related.
\end{enumerate}
The first question is positively answered for Maass cusp forms of
weight $0$ in \cite{LZ01} and for real weights in \cite{Mu03}.
Also, Bruggeman, Lewis and Zagier discuss recently the case of Maass forms
(of weight 0 and of polynomial growth in the cusps) and is associated group cohomology in \cite{BLZ}.
Deitmar and Hilgert discuss the situation for subgroups of finite index and weight 0 in $\SL{\IZ}$ in \cite{DH07}.
A recent result by Deitmar discusses the situation for Maass wave forms of higher order in \cite{De11}.

To our knowledge, the second and third question are still open for general Maass wave forms with complex weight.
Our results extend trivially to the case of Maass cusp forms with real weight (by just replacing half-integer with real everywhere).
The third question is also positively answered in \cite{KR10} for generalized modular forms (introduced in \cite{KM03}).

The fourth question is answered in \cite{BGKO}.
They show an Eichler-Shimura-type result for harmonic Maass wave forms, see e.g.\ \cite[Theorem~1.2]{BGKO}.

The last question is also discussed in \cite{Mu03}, generalizing the first part of \cite{LZ01}.

\section*{Acknowledgements}
The authors would like to thank the referee for the excellent
recommendations. In addition, the authors would like to thank the
Center for Advanced Mathematical Sciences (CAMS) at the American
University of Beirut for the support. The first-named author would
like to express his gratitude towards CAMS for supporting his visit.

%% References

\bibliographystyle{amsalpha}

\end{document}